\documentclass[DIV=14]{scrartcl}
\usepackage[sb]{libertine} %
\usepackage[T1]{fontenc}
\usepackage{textcomp}
\usepackage{multirow}
\usepackage[varqu,varl]{zi4}%
\usepackage[amsthm,upint]{libertinust1math} %
\usepackage[scr=boondoxo,bb=boondox]{mathalpha} %
\usepackage{bm}
\usepackage{mathtools,amssymb}
\usepackage{todonotes}
\usepackage{booktabs}
\usepackage{enumitem}
\usepackage{placeins}
\usepackage{colortbl}
\usepackage{tikz-3dplot}
\usepackage{bm}
\usepackage[style=numeric, giveninits]{biblatex}
\AtEveryBibitem{%
  \clearfield{urlyear}%
  \clearfield{urldate}%
}%
\AtEveryCitekey{%
  \clearfield{urlyear}%
  \clearfield{urldate}%
}%
\usepackage{xcolor}
\usepackage{float}
\RequirePackage{siunitx}
\usepackage[noend]{algorithmic}
\RequirePackage[algoruled,norelsize, linesnumbered,
lined,commentsnumbered]{algorithm2e}
\usepackage[format=plain,labelfont=bf]{caption}
\usepackage{subcaption}
\definecolor{linkblue}{RGB}{0 83 139}
\definecolor{citered}{RGB}{114 16 69}
\usepackage[pdftex,colorlinks=true,linkcolor={linkblue},citecolor={citered},urlcolor=blue]{hyperref}
\RequirePackage{cleveref}
\usepackage{mycommands}
\usepackage{pgfplots}
\newcommand{\mc}[1]{\multicolumn{1}{c}{#1}}

\usepgfplotslibrary{groupplots,colorbrewer,fillbetween}
\pgfplotsset{compat=newest}%
\pgfplotsset{ colormap/Set1-4, cycle multiindex* list={ mark
    list*\nextlist Set1-4\nextlist }, every axis/.append style = {thick},%
}%
\pgfkeys{/pgf/number format/.cd,1000 sep={\,}}
\usepgfplotslibrary{colormaps}
\pgfplotsset{ every mark/.append style={4pt},tick style = {thick,black}}%

\newcommand{\symbf}[1]{\boldsymbol{#1}}

\newcommand\restrict[1]{\raisebox{-.5ex}{$|$}_{#1}}

\definecolor{myblue}{RGB}{35 71 189}
\definecolor{myred}{RGB}{142 5 0}
\definecolor{mygreen}{RGB}{41 175 52}
\theoremstyle{plain} %
\newtheorem{theorem}{Theorem}[section]

\newtheorem{lemma}[theorem]{Lemma}
\newtheorem{corollary}[theorem]{Corollary}
\newtheorem{remark}[theorem]{Remark}
\newtheorem{definition}[theorem]{Definition}

\newtheorem{problem}[theorem]{Problem}

\theoremstyle{definition} %

\theoremstyle{remark} %

\newcommand{\A}[1]{{N({#1})}}
\newcommand{\AZ}[2]{{N_{#1}^{({#2})}}}

\newcommand{\Riso}[1]{\mathrm{R}^{#1}_{h}}
\newcommand{\Rdir}[2]{\mathrm{R}^{#1}_{h,#2}}
\newcommand{\innerpart}{b_{\boldsymbol{\alpha}}}
\allowdisplaybreaks
\setlist[description]{%
  font={\normalfont}, %
}

\author{
  Nils Margenberg
  \thanks{
    University of Magdeburg,
    Institute for Analysis and Numerics,
    Universit\"atsplatz 2,
    39104 Magdeburg,
    Germany,
    \texttt{nils.margenberg@ovgu.de}, (Corresponding Author)
  }
  \and
  Marius Paul Bruchhäuser
  \thanks{Helmut Schmidt University,
    Faculty of Mechanical and Civil Engineering,
    Holstenhofweg 85,
    22043 Hamburg,
    Germany,
    {\texttt{bruchhaeuser@hsu-hh.de}}}
  \and
  Bernhard Endtmayer\thanks{Leibniz University Hannover, Institute for Applied Mathematics, Welfengarten 1, 30167 Hannover, Germany, \texttt{endtmayer@ifam.uni-hannover.de}}
  \thanks{Cluster of Excellence PhoenixD (Photonics, Optics, and
    Engineering -- Innovation Across Disciplines), Leibniz University Hannover, Germany}}
\date{}
\title{Anisotropic $hp$ space-time adaptivity and goal-oriented error control
  for convection-dominated problems}

\begin{document}
\maketitle
\vspace{-8ex}%
\begin{abstract}
  We present an anisotropic goal-oriented error estimator based on the Dual
  Weighted Residual (DWR) method for time-dependent convection-dominated problems.
  Using elementwise $p$-anisotropic finite element spaces, the estimator is
  elementwise separated with respect to the single directions in space and time.
  This naturally leads to adaptive, anisotropic $hp$-refinement  ($h$-anisotropic refinement and elementwise anisotropic $p$-enrichment).
  We employ discontinuous elements in space and time, which are well suited for
  problems with high P\'{e}clet numbers.
  Efficiency and robustness of the underlying algorithm are demonstrated for
  different goal functionals.
  The directional error indicators quantify anisotropy of the solution with
  respect to the goal, and produce $hp$-refinements that efficiently capture sharp layers.
  Numerical examples in up to three spatial dimensions demonstrate the superior performance of the proposed method compared to isotropic $h$ and $hp$ adaptive refinement using
  established benchmarks for convection-dominated transport.
  
\noindent\textbf{Keywords:}
anisotropic $hp$-adaptivity,
goal-oriented error control,
dual-weighted residual method,
space-time finite elements

\noindent\textbf{MSC (2020):}
65M60, 65N50, 65M15, 65N15
\end{abstract}
\section{Introduction}

Goal-oriented adaptivity and anisotropic mesh refinement are established tools in the numerical solution of partial differential equations (PDEs).
While each of these techniques has been studied extensively, their combination in the context of convection-dominated, time-dependent problems remains an area of active research.
Convection-dominated transport problems are characterized by sharp layers and fronts that benefit greatly from anisotropic \textit{hp}-adaptive, goal-oriented refinement.
In particular, the development of directional error indicators offers new opportunities in space-time $hp$-adaptivity for further improvement with regard to accurate and efficient simulations.
This work contributes to this development by building a framework that performs $hp$-anisotropic space-time refinement driven by goal-oriented error control.

The concept of \textit{hp}-adaptive finite elements was pioneered by Babuška et al.~\cite{babuska_error_1981,gui-babuska-hp-1986,guo_h-p_1986,guo_h-p_1986-1,babuska_p_1994}.
The theoretical foundation was laid in the 1980s, where it was demonstrated that combining mesh refinement ($h$-refinement) and increasing polynomial order ($p$-enrichment) can achieve exponential convergence rates on so-called geometrically graded meshes~\cite{gui-babuska-hp-1986,guo_h-p_1986,babuska_approximation_1996}.
First versions of a \textit{hp}-adaptive code were introduced by Devloo~\cite{devloo-hp-2000} as well as Ainsworth and Senior~\cite{ainsworth_aspects_1997} in the late 1990s and later on by Demkowicz et al.\ introducing a fully automatic \textit{hp}-adaptive strategy for elliptic problems~\cite{demkowicz_fully_2002}.
For a review of different methods to decide between $h$ and $p$ refinement, we refer to~\cite{houston_note_2005}.
For a short review to the development of implementing \textit{hp} elements we refer to the monograph~\cite{demkowicz_computing_2006}. Moreover, the monograph of Schwab provides a treatment of \textit{p}/\textit{hp}-FEM theory~\cite{Schwab1998_hpFEM}.

For anisotropic refinement, Apel's analysis of anisotropic interpolation error estimates~\cite{apel_anisotropic_1999} laid the groundwork for stretched meshes aligned with layers.
Picasso was an early contributor to apply an adaptive algorithm with anisotropic spatial and temporal refinement for time-dependent problems~\cite{picasso_adaptive_1998}.
Since then, several approaches for the directional refinement decisions have been proposed and are well established, cf., e.g., \cite{formaggia_anisotropic_2001,venditti_anisotropic_2003,formaggia_anisotropic_2004,leicht_error_2010,alauzet_anisotropic_2012,carpio2013}.
For a short overview of anisotropic refinement methods in finite elements, we refer to the review~\cite{schneider-aniso-2013}.

At the turn of the last millennium, Becker and Rannacher introduced the Dual Weighted Residual (DWR) method~\cite{BeckerRannacher1995,BeRa01},
which uses the solution of an adjoint problem to weight local residuals and estimates errors in a specific quantity of interest.
At the same time, Oden and Prudhomme formulated a similar framework~\cite{oden_goal-oriented_2001}.
These works established the principle of goal-oriented adaptivity for elliptic PDEs, whereby
Becker and Rannacher's work~\cite{BeRa01} became the foundational reference for DWR-based adaptivity.
Šolín and Demkowicz extended DWR to drive both $h$ and $p$ refinement decisions in elliptic problems~\cite{solin_goal-oriented_2004}.
In the last two decades, the DWR method was applied to many mathematical models based on PDEs, including fluid mechanics, wave propagation, structual mechanics, eigenvalue problems and many others, cf., e.g., \cite{rannacher-elasto-1999,BruchhaeuserSV08,bangerth-wave-2010,richter-fsi-2012,BruchhaeuserBR12,wick-fsi-2021,endtmayer_goal-oriented_2024}.
For a review including detailed lists of references regarding different model problems, we refer to~\cite{BaRa03,endtmayer_chapter_2024}.

With regard to (convection-dominated) transport problems, the three approaches described above have been successfully applied severally as well as in varying combinations among each other.
The analysis of $hp$-adaptive algorithms for transport-dominated problems was pioneered by Houston, Schwab, and
Süli~\cite{houston_stabilized_2000,houston_discontinuous_2002}, who demonstrated exponential convergence of stabilized and discontinuous Galerkin methods.
For convection-dominated problems, Melenk provided rigorous analysis to prove exponential convergence of $hp$-FEM for singularly perturbed problems using anisotropic mesh grading and $p$-enrichment~\cite{melenk_hp-finite_2002}.
Bittl and Kuzmin proposed an $hp$-adaptive flux-corrected transport algorithm
for unsteady linear convection equations in~\cite{bittl-hp-fct-2013}.
For the development of DG methods on $hp$-anisotropic meshes for steady-state convection-diffusion-reaction equations we refer
to~\cite{georgoulis_discontinuous_2007,georgoulis_discontinuous_2008}, and using additionally goal-oriented error control we refer to~\cite{georgoulis_discontinuous_2009}.
Regarding non-stationary convection-diffusion problems, an $hp$-adaptive discontinuous Galerkin method using an a posteriori error estimator for the error in the $L^2(H^1)+L^\infty(L^2)$-type norm was developed in~\cite{cangiani-hp-cdr-2019}.
Anisotropic mesh refinement applied to convection-diffusion-reaction equations can be found in~\cite{apel_anisotropic_1996,formaggia_anisotropic_2001,formaggia_anisotropic_2004,knobloch2025adaptive-aniso}.
For time-dependent convection-dominated problems, the DWR method combined with stabilization has been investigated by some of the authors for isotropic~\cite{bruchh-dwr-2020,BruchhaeuserB22,bruchh-dwr-2022,bruchh-dwr-2025} as well as anisotropic~\cite{bause2025anisotropicspacetimegoalorientederror,bbemtw-multi-2025} adaptive refinement.
For high-order methods, Ceze and Fidkowski developed an anisotropic
\textit{hp}-adaptation strategy driven by adjoint error
estimates~\cite{ceze_anisotropic_2013}.
Dolejší et al.\ developed space-time DWR-based refinement strategies for convection-diffusion-reaction (CDR)
equations~\cite{dolejsi_anisotropic_2023}, adjusting both mesh anisotropy and polynomial degrees in space and time.
For general reviews of convection-dominated problems and further references, we refer to~\cite{roos-cdr-2008,john-cdr-2018}.

In this work, we develop a tensor-product space-time DG discretization with fully anisotropic \(hp\)-adaptivity, allowing independent \(h\)- and \(p\)-updates per spatial direction and in time.
Within a DWR framework, we derive directional error indicators \(\eta_{h,\,i}, \eta_{\tau}, i=1,\dots,d\), and use them to
develop a principled refinement policy that naturally yields refinement
decisions on anisotropic polynomial enrichment and geometric refinement. The decision between these two options is made based on a local saturation indicator, cf., e.\,g.,~\cite{houston_note_2005}. The algorithms use efficient transfer and integration procedures on anisotropically refined meshes, implemented in the finite element library \texttt{deal.II}~\cite{africa_dealii_2024} with shared-memory parallelism.
To improve the time-to-solution, we empirically assess an \(h\)- and \(p\)-robust preconditioning strategy for the resulting space-time systems that remains effective under strong anisotropy.
We target well-known challenges in the numerical simulation of convection-dominated problems:
\begin{enumerate}\itemsep1pt \parskip0pt \parsep0pt
\item Resolving thin boundary/interior layers and geometric singularities (e.g.,
  the Fichera corner) without spurious oscillations;
\item Reducing computational cost by concentrating resolution where a goal functional, chosen by the user, is most sensitive;
\item Deciding between \(h\) and \(p\) \emph{per direction} in accordance with
  local anisotropy and regularity;
\item Ensuring solver robustness for highly anisotropic meshes and high
  polynomial degrees;
\end{enumerate}
We propose a goal-oriented, anisotropic $hp$ space-time adaptive algorithm based on the DWR method.
Its realization builds on Richter's work on splitting spatial error
contributions~\cite{richter_posteriori_2010}, on subsequent extensions in the
space-time setting~\cite{bause2025anisotropicspacetimegoalorientederror},
and on anisotropic $p$-adaptivity by Houston et al.~\cite{georgoulis_discontinuous_2008,georgoulis_discontinuous_2009}.
We integrate these techniques into a unified framework suitable for challenging, time-dependent, convection-dominated problems.
The directional error decomposition and associated directional refinement policy are key to efficient \(hp\)-adaptation and distinguish our approach from methods that select an action via a ``competition'' among candidate refinements (see, for instance,~\cite{chakraborty_anisotropic_2024}).
By selecting \(h\)- or \(p\)-updates directly from the split estimator, we avoid candidate testing while aligning refinement with the local anisotropy.

The remainder of this work is structured as follows:
In Sec.~\ref{sec:notation}, we introduce the mathematical notation, the CDR equation, and its variational formulation.
The discrete anisotropic finite element spaces are introduced in Sec.~\ref{sec:discrete_spaces}.
The space-time discretization is presented in Sec.~\ref{sec:st-discretization}.
In Sec.~\ref{sec:error_rep}, we review an a posteriori error representation with respect to a single goal.
In Sec.~\ref{sec:anisotropic-ee}, we detail the anisotropic error estimation technique.
Sec.~\ref{sec:algorithm} describes the algorithm for goal-oriented, anisotropic $hp$ mesh refinement.
The solution of the arising algebraic systems is discussed in
Sec.~\ref{sec:solvers}.
Numerical examples are presented in Sec.~\ref{sec:numerical_examples}, including a fully time-dependent 3D test case.
Finally, Sec.~\ref{ref:conclusion} summarizes the results and outlines
directions for future research.

\section{Mathematical Problem and Notation}
\label{sec:notation}
Let \(\Omega\subset\mathbb R^d\) (\(d\in\N\)) be a bounded Lipschitz domain, and let the boundary be partitioned as
\( \partial\Omega = \Gamma_D \cup \Gamma_N,\ \Gamma_D\neq\emptyset,\ \Gamma_D\cap\Gamma_N=\emptyset.
\)
Let \(I=(0,T]\) denote the time interval, with final time \(T>0\).

We denote by \(H^1(\Omega)\) the Sobolev space of \(L^2(\Omega)\) functions
whose first derivatives are in \(L^2(\Omega)\). Define
\(H \coloneq L^2(\Omega)\), \(V \coloneq H^1(\Omega)\), and
\(V_0 \coloneq H^1_0(\Omega)\) as the space of \(H^1\)-functions with vanishing
trace on the Dirichlet boundary \(\Gamma_D\). The corresponding adjoint spaces
are denoted by \(V^\prime\) and \(V^\prime_0\), respectively.
The \(L^2\)-inner product is
\(( \cdot, \cdot )\), with the norm
\(\|\cdot\| \coloneq \|\cdot\|_{L^2(\Omega)}\).
We introduce the continuous space
\begin{equation}
  \label{eq:st-cont-l2}
  \mathcal{X}\coloneq\brc{w\in L^2(I;\,V_0)\,:\, \partial_t w \in L^{2}(I; V^\prime_0)}\,,
\end{equation}
where $L^2(I;\,V_0)$ is the Bochner space of \(V_0\)-valued functions as defined
in~\cite{lions}.

\paragraph{Convection-Diffusion-Reaction Equation.}
Convection-diffusion-reaction (CDR) equations are fundamental in modeling a wide
range of physical phenomena, including fluid dynamics, chemical reactions, and
heat transfer. These equations are characterized by the interaction of
convective transport, diffusion, and reactive sources or sinks. In this work we
study a time-dependent CDR equation given by
\begin{subequations}\label{eq:CDR_system}
    \begin{align}
      \partial_t u -\nabla \cdot (\boldsymbol\varepsilon \nabla
      u) + \boldsymbol{b}\cdot\nabla u + \alpha u &= f(\symbf{x},\,t), && \text{in } Q = \Omega \times I\,, \label{eq:CDR_a} \\
        u(\symbf{x},\,t) &= g(\symbf{x},\,t), && \text{on } \Gamma_D \times I\,, \label{eq:CDR_b} \\
      \boldsymbol\varepsilon \nabla u \cdot \boldsymbol{n} &= u_N(\symbf{x},\,t), && \text{on } \Gamma_N \times I\,, \label{eq:CDR_c} \\
      u(\symbf{x},\,0) &= u_0(\symbf{x}), && \text{in } \Omega\,, \label{eq:CDR_d}
    \end{align}
\end{subequations}
where $\Gamma_D$ and $\Gamma_N$ denote the Dirichlet and Neumann parts of the boundary $\partial\Omega$, respectively, with $\Gamma_D \cup \Gamma_N =
\partial\Omega$ and $\Gamma_D \neq \emptyset$.
The unit outward normal vector to the boundary is denoted by $\boldsymbol{n}$.

In equation~\eqref{eq:CDR_system}, the parameter $\boldsymbol\varepsilon\in
  L^{\infty}(I; W^{1,\,\infty}(\Omega)^{d\times d})$ represents a symmetric
  positive definite
  diffusion coefficient. The convection field $\boldsymbol{b} \in L^{\infty}(I; W^{1,\,\infty}(\Omega)^d)$ governs the advective transport, while $\alpha \in L^{\infty}(I; L^{\infty}(\Omega))$ is a reaction coefficient satisfying $\alpha \geq 0$. The source term $f \in L^{2}(I; H^{-1}(\Omega))$, initial condition $u_0 \in L^2(\Omega)$, Dirichlet boundary data $g \in L^{2}(I; H^{\frac{1}{2}}(\Gamma_D))$, and Neumann boundary data $u_N \in L^{2}(I; H^{-\frac{1}{2}}(\Gamma_N))$ are given.
To ensure the well-posedness of problem~\eqref{eq:CDR_system}, we assume either
(i) $\nabla \cdot \boldsymbol{b}(\boldsymbol{x},t) = 0$ and $\alpha(\boldsymbol{x},t) \ge 0$, or
(ii) that there exists a constant $c_0 > 0$ such that
\begin{equation}
  \alpha(\boldsymbol{x},t) - \tfrac{1}{2}\,\mathrm{div}\,\boldsymbol{b}(\boldsymbol{x},t)
  \;\ge\; c_0
  \qquad \forall\,(\boldsymbol{x},t)\in\overline{\Omega}\times\overline{I}.
\end{equation}
These conditions are standard in the analysis of convection-diffusion-reaction
problems of type~\eqref{eq:CDR_system} and ensure coercivity and stability of the
associated variational formulation; see, e.\,g.,~\cite{BruchhaeuserAJ15,roos-cdr-2008}.

\paragraph{Weak Formulation of the CDR Equation.} Under the above conditions,
problem~\eqref{eq:CDR_system} admits a unique weak solution $u\in \mathcal{X}$ satisfying the variational formulation
\begin{equation}
    \label{eq:weak_formulation}
    A(u)(w) = F(w) \quad \forall w \in \mathcal{X},
\end{equation}
where the bilinear form $A: \mathcal{X} \times \mathcal{X} \to \mathbb{R}$ and the linear form $F: \mathcal{X} \to \mathbb{R}$ are defined by
\begin{equation}
    \label{eq:A_def}
    A(u)(w) \coloneq \int_{I} \left\{ (\partial_t u, w) + a(u)(w) \right\} \, \drv t + (u(0), w(0)),\qquad
    F(w) \coloneq \int_{I} (f, w) \, \drv t + (u_0, w(0)),
  \end{equation}
with $(v,w)\coloneq \int_{\Omega}v w\, \textrm{d}\boldsymbol x$  and the bilinear form $a: V \times V \to \mathbb{R}$ given by
\begin{equation}
    \label{eq:a_def}
    a(u)(w) \coloneq (\boldsymbol\varepsilon \nabla u, \nabla w) + (\boldsymbol{b} \cdot \nabla u, w) + (\alpha u, w).
  \end{equation}

\section{Discrete Spaces}
\label{sec:discrete_spaces}
We now introduce the discrete space-time meshes and finite element spaces underlying the DG discretization.
Letting $t_0=0$ and
$t_{N_I}=T$, we split the time
interval \(I\) into a sequence of \(N_I\) disjoint subintervals
\(I_n=(t_{n-1},\,t_n]\), \(n=1,\dots,\,N_I\) with the time step
$\tau_n\coloneq t_n-t_{n-1}$. By $\mathcal T_{\tau}=\{0\}\cup I_1 \cup \cdots \cup I_{N_I}$ we refer to the time mesh.

For the space discretization, let \(\mathcal T_h\) be a conforming partition of
\(\Omega\) into open quadrilaterals ($d=2$) or hexahedrals ($d=3$).
Each element \( K \in \mathcal{T}_h \) is mapped from the reference cell
\(\hat{K} = (-1,1)^d\) by an isoparametric transformation \(\boldsymbol{T}_K\)
with \(\det(\boldsymbol{T}_K)(\hat{x}) > 0\) for all \(\hat{x} \in \hat{K}\).
We decompose
\begin{equation}
\label{eq:TK}
\boldsymbol T_K \coloneq \boldsymbol R_K \circ\boldsymbol S_{c,K} \circ\boldsymbol S_{h,K} \circ\boldsymbol P_K,
\end{equation}
where \(\boldsymbol R_K \) is a rotation and translation, \(\boldsymbol S_{c,K}
\) is an anisotropic scaling, \(\boldsymbol S_{h,K} \) is a shearing, and
\(\boldsymbol P_K \) is a nonlinear component.
To account for anisotropic elements, we relax the standard shape-regularity
assumptions and require only uniform boundedness of \(\boldsymbol S_{h,K} \) and
\(\boldsymbol P_K \) for all \( K \in \mathcal{T}_h \).
Let \( h_{K,i} \), \(i = 1,\dots,d\), denote the element size in coordinate direction \(i\). Define the local anisotropy ratio by
\begin{equation}\label{eq:loc-aniso-ratio}
  \rho_{K} \coloneq \max_{\boldsymbol{x} \in Q_K} \frac{\lambda_{\max}(\boldsymbol{x})}{\lambda_{\min}(\boldsymbol{x})},
\end{equation}
where \(Q_K\) is the set of quadrature points in \(K\), and \(\lambda_{\min}\),
\(\lambda_{\max}\) are the minimal and maximal eigenvalues of \(\nabla
\boldsymbol{T}_K(\boldsymbol{x})\), respectively. An element $K$ is called
\emph{anisotropic} if \(\rho_{K} {>} 1\).%

For each element \(K\), let \(\mathcal F(K)\) be its set of \((d-1)\)-dimensional faces, and let
\(\mathcal F_h = \bigcup_{K\in\mathcal T_h} \mathcal F(K)\), where \(\mathcal F_h = \mathcal F_h^0 \cup \mathcal F_h^b\) is partitioned into the sets of interior faces and boundary faces, respectively. For any interior face \(F\), there are exactly two elements \(K^+\) and \(K^-\) sharing \(F\), with outward unit normals \(\boldsymbol n_F^+\) (w.r.t.\ \(K^+\)) and \(\boldsymbol n_F^-\) (w.r.t.\ \(K^-\)).  If \(F\in\mathcal F_h^b\), then only one element \(K\) touches \(F\), and we denote by \(\boldsymbol n_F\) its outward unit normal.  The \((d-1)\)-measure of \(F\) is \(|F|\).
We denote the space-time tensor product mesh by
$\mathcal T_{\tau h}=\mathcal T_h\times \mathcal T_{\tau}$.

For a $k\in \N_0\coloneq \N\cup \{0\}$ and Banach space $B$, we let $\mathbb P_{k}(I_n;\,B)$ denote the set of all polynomials of degree less than or equal to $k$ on $I_n$ with values in $B$.
As we allow $hp$ anisotropic refinements in time, we define \(k_n\) as the polynomial
degree on interval \(I_n\), which potentially vary significantly.
For a multi index \(\boldsymbol k =(k_1,\dots,\,k_{N_I})\in \N_0^{N_I}\) we
define the space of semi-discrete \(L^2\) in time functions as
\begin{equation}
  \label{eq:st-disc-l2}
  \mathcal{X}_{\tau}^{\boldsymbol k}(V_0)\coloneq\brc{w_{\tau}\in L^2(I;\,V_0) \suchthat w_{\tau}\restrict{I_n} \in \mathbb{P}_{k_n}(I_{n};\,V_0)\,, w_{\tau}(0)\in H\,, \forall n\in \{1,\dots,\,N_I\}}\,.
\end{equation}

\subsection{Anisotropic Polynomial Degrees in Space}
We define the anisotropic polynomial degree vector on each element \( K \in \mathcal{T}_h \) by
\[
  p_K = (p_{K,1},\dots,p_{K,d}) \quad \text{with} \quad p_{K,i} \in \mathbb{N}_0.
\]
\begin{definition}[$p$-Anisotropic Finite Element]
  \label{def:anisoFEM}
  Let \((p_1,\dots,p_d)\in\mathbb{N}_0^d\) be a multi-index. Then, the anisotropic polynomial space on the reference element \(\hat{K} = (-1,1)^d\) is defined as
  \[
    \hat{\mathbb{Q}}_{p_1,\dots,p_d}(\hat{K}) \coloneq \bigotimes_{i=1}^{d} \mathbb{P}_{p_i}([-1,1])\,,
  \]
  where \(\mathbb{P}_{p}([-1,1])\) denotes the space of univariate polynomials of
  degree at most \(p\).
\end{definition}
The degrees of freedom (DoFs) are associated with Gauss-Lobatto nodes on each reference element \(\hat{K}\), and the total number of DoFs per element is
\[
  N_K^{p_1,\dots,p_d} = \prod_{i=1}^{d} (p_{K,i} + 1).
\]
This construction generalizes the isotropic case: for \( p_1 = \dots = p_d
\coloneq p \), we recover the standard space \(\hat{\mathbb Q}_p(\hat{K})\).
The corresponding local finite element space on \(K\) consists of tensor-product polynomials:
\[
  \mathbb{Q}_{p_K}(K) \coloneq \bigotimes_{i=1}^{d}
  \mathbb{P}_{p_{K,i}}([-1,1])\circ \boldsymbol{T}_K^{-1}\,,
\]
where $\boldsymbol{T}_K$ is the isoparametric mapping of
degree $\max_{i\in \{1,\dots,d\}} p_{K,i}$.

\begin{definition}[Global $p$-Anisotropic Finite Element Spaces]
  We denote the global finite element space with $p$-anisotropic elements as defined in Def.~\ref{def:anisoFEM} by
  \begin{equation}\label{eq:p-aniso-gl}
    \mathcal{V}_h^{\boldsymbol{p}} \coloneq \{ v_h\in L^2(\bar\Omega)\;:\;
    v_h\restrict{K}\in \mathbb{Q}_{p_K}(K)\;\forall K\in\mathcal T_h\},\quad p_K=(p_{K,1},\dots,p_{K,d})\,,
  \end{equation}
  where $\boldsymbol{p} \coloneq (p_K)_{K\in\mathcal{T}_h}$, i.\,e.\ the vector of elementwise anisotropic polynomial degrees.
  Further, for a fixed $\boldsymbol p$ in~\eqref{eq:p-aniso-gl} and $k\in \N$ we introduce the higher-order space
  \begin{equation}\label{eq:p-aniso-gl+1}
    \mathcal{V}_h^{\boldsymbol{p}+\boldsymbol k} \coloneq \{ v_h\in
    L^2(\bar\Omega)\;:\; v_h\restrict{K}\in
    \mathbb{Q}_{p_K+k}(K)\;\forall
    K\in\mathcal T_h\},\quad p_K+k\coloneq (p_{K,1}+k,\dots,p_{K,d}+k),\quad
  \end{equation}
  For the directional error estimation, we also need spaces for restrictions in
  the $i$-th spatial direction. For the same $\boldsymbol p$ considered above, we define these spaces as
  \begin{equation}\label{eq:p-aniso-gl-ei}
    \mathcal{V}_h^{\boldsymbol{p},i} \coloneq \{ v_h\in L^2(\bar\Omega)\;:\;
    v_h\restrict{K}\in \mathbb{Q}_{p_K-e_i}(K)\;\forall K\in\mathcal T_h\}
  \end{equation}
  where $e_i$ is the $i$-th unit vector in $\mathbb N_0^d$. We further need an
  analogous variant of $\mathcal{V}_h^{\boldsymbol{p},i}$ for the higher-order
  space $\mathcal{V}_h^{\boldsymbol{p}+\boldsymbol{k}}$, which we denote by
  \begin{equation}
    \label{eq:p-aniso-gl+1-ei}
    \mathcal{V}_h^{\boldsymbol{p}+\boldsymbol{k},i}\coloneq \{ v_h\in L^2(\bar\Omega)\;:\;
    v_h\restrict{K}\in \mathbb{Q}_{p_K+k-ke_i}(K),\,\;\forall K\in\mathcal
    T_h\},\quad p_K+k-ke_i=(p_{K,1}+k,\dots,p_{K,d}+k)-ke_i\,.
  \end{equation}
\end{definition}

\begin{definition}[$p$-Anisotropic Discrete Space-Time Function Spaces.]
  We define the discrete space-time function space for~\eqref{eq:p-aniso-gl} by the tensor-products
  \begin{align}
    \label{eq:disc-spaces-tp-y}
    \mathcal{X}_{\tau h}^{\boldsymbol{k},\,\boldsymbol{p}}\coloneq\mathcal{X}_{\tau}^{\boldsymbol{k}}(\R)\otimes\mathcal{V}_h^{\boldsymbol{p}}
    &=\operatorname{span} \{
      \xi\otimes\phi\suchthat\xi \in \mathcal{X}_{\tau}^{\boldsymbol{k}}(\R),\: \phi\in \mathcal{V}_h^{\boldsymbol{p}}\}\,,
  \end{align}
  where $\xi\otimes\phi\,(\symbf{x},\,t) = \phi(\symbf{x})\xi(t)$. The
  space-time function spaces based
  on the spaces~\eqref{eq:p-aniso-gl+1}\,--\,\eqref{eq:p-aniso-gl+1-ei}
  are defined analogously.
\end{definition}

\begin{remark}[Tensor product function spaces]\label{rem:tensor-product-spaces}
  For the
  construction of tensor products of Hilbert spaces we refer to~\cite[Sec.\ 1.2.3]{picard_partial_2011}. We remark that the space
  $\mathcal{X}_{\tau h}^{\boldsymbol{k},\,\boldsymbol{p}}$ is isometric to the
  Hilbert space \[\mathcal{X}_{\tau}^{\boldsymbol k}(\mathcal V_h^{\boldsymbol p})=\{ w_{\tau h}\in
  L^2(I;\mathcal{V}_h^{\boldsymbol{p}})\;:\; w_{\tau h}|_{I_n}\in\mathbb
  P_{k_n}(I_n;\mathcal{V}_h^{\boldsymbol{p}})\,, w_{\tau h}(0)\in
  \mathcal{V}_h^{\boldsymbol{p}}\,, \forall n\in \{1,\dots,N_I\}\}\,,\]
  see~\cite[Proposition 1.2.28]{picard_partial_2011}.
  Thus, we identify these pairs of spaces with each other throughout this work.
  Similarly, we identify $\mathbb{P}_{k_n}(I_{n};\,\mathcal{V}_{h}^{\boldsymbol{p}})$ with
  $\mathbb{P}_{k_n}(I_{n};\,\R)\otimes \mathcal{V}_{h}^{\boldsymbol{p}}$.
\end{remark}

For \((x,t)\in \Omega\times\overline{I_n}\), we write
\[
  w_{\tau h}(x,t)\bigm|_{I_n}
  = \sum_{i=1}^{k_n+1} \Bigl(\sum_{j=1}^{N_{\symbf x}} w^i_{n,j}\,\phi_j(x)\Bigr)\;\xi_{n,i}(t)\,,
\]
where \(\{\phi_j\}_{j=1}^{N_{\symbf x}}\subset\mathcal V_h^{\boldsymbol p}\) is the nodal basis,
\(\{\xi_{n,i}\}_{i=1}^{k_n+1}\subset\mathbb P_{k_n}(I_n)\) is the \(k_n+1\)-point Lagrange basis on \(I_n\),
and \(w^i_{n,j}\in\mathbb R\) are the corresponding coefficients.

Given an elementwise defined scalar function \(v\), its \emph{jump} across an
interior face \(F\in\mathcal F_h^0\) shared by elements \(K^+\) and \(K^-\) is
\[
  [ v ]_{F} = v^+ - v^-,
\quad
\text{where}
\quad
  v^+ = v(x)\restrict{F,K^+},
  \qquad
  v^- = v(x)\restrict{F,K^-}.
\]
Similarly, the average of an elementwise defined scalar function \(v\) across an interior face \(F\in\mathcal F_h^0\) shared by elements
\(K^+\) and \(K^-\) is defined as
\[
  \{v\}=\frac{1}{2}(v^++v^-)\,.
\]

Here \(v\restrict{F,K^+}\) means ``the trace of \(v\) on \(F\) as viewed from the
element \(K^+\)'', and similarly for \(v\restrict{F,K^-}\). On a boundary face
\(F\in\mathcal F_h^b\), one typically sets \([v]_F =\{v\}_F = v\restrict{F}\),
since there is only a single trace on that boundary face.

Throughout this paper, the symbol $C$ denotes a generic positive constant. Its value may change from one occurrence to another and is independent of the
mesh size $h$ and other discretization parameters.

\subsection{Anisotropic Polynomial Degrees in Time}
For $hp$ anisotropic refinements in time we define the temporal basis functions
and corresponding temporal matrices.
We discretize in time using a discontinuous Galerkin method DG($k$), $k \ge 0$,
with trial and test spaces $\mathcal{X}_{\tau}^{\boldsymbol k}(V_0)$
(cf.~\eqref{eq:st-disc-l2}). This allows discontinuities across subinterval
boundaries and decouples the system at interval endpoints. This enables a
piecewise solution with temporal basis functions supported on each
subinterval~$I_n$.

Let \(\{\hat \xi_i\}_{i=1}^{k_n+1}\subset \mathbb{P}_{k_n}(\hat I,\,\R)\),
\(\hat I \coloneq [-1,\,1]\) denote the Lagrangian basis
\(\mathbb{P}_{k_n}(\hat I,\,\R)\) w.\,r.\,t.\ the integration points of the
\(k_n+1\) point Gauss-Radau quadrature including the right endpoint. Further,
denote by \(\mathbb{P}_{k_n}(I_n,\,\R)\) the mapped polynomial space from
$\hat I$ to $I_n$ with the mapped basis $\{\xi_i\}_{i=1}^{k_n+1}$.
\paragraph{Time Discretization Matrices}
The matrices for the time discretization are
given through the weights \(\symbf M_{\tau}^k\in \R^{k+1\times k+1}\),
\(\symbf A_{\tau}^k\in \R^{k+1\times k+1}\) and \(\symbf m_{\tau}^k\in \R^{k+1}\), with
\begin{equation}
  \label{eq:dg-time-weights}
  {(\symbf M_{\tau}^k)}_{i,\,j} \coloneq
  \int_{\hat I}{\hat\xi_j{(\hat t)}
    \hat \xi_i{(\hat t)} \drv \hat t}
  \,,\quad
  {(\symbf A_{\tau}^k)}_{i,\,j} \coloneq \int_{\hat I} \hat \xi_j'{(\hat t)\hat \xi_i{(\hat t)} \drv \hat t}+ \hat\xi_j{(0)} \hat \xi_i{(0)}
  \,,\quad
  {(\symbf m_{\tau}^k)}_{i} \coloneq  \hat \xi_i{(0)}\,,
\end{equation}
for $i,\,j=1,\dots,\,k+1$.

\section{Space-Time Finite Element Discretization}\label{sec:st-discretization}
We now introduce the space-time finite element discretization of the weak
formulation~\eqref{eq:weak_formulation}, based on the discrete spaces from the
previous section, which will be used within the DWR framework. In time, we
employ DG($k$) with slabwise polynomial degrees $k_n$; in space, we use
$p$-anisotropic tensor-product elements on quadrilateral/hexahedral meshes. We
define the discrete variational form, and the resulting local and global
algebraic systems. This prepares the discrete adjoint (by transposition) and the
anisotropic error estimator.

\paragraph{Discrete Space-Time Variational Formulation.}

Given the $hp$-anisotropic space-time finite element space
\(\mathcal{X}_{\tau h}^{\boldsymbol k,\boldsymbol{p}}\), we formulate the discrete space-time
variational problem for the CDR equation.
First, we define the spatial DG bilinear form \(a_s^{\gamma}(\cdot,\cdot)\) by
\begin{equation}
  \label{eq:spatial_dg_bilinear_form}
  \begin{aligned}
    a_s^{\gamma}(u,w)
    &\coloneq \sum_{K\in\mathcal T_h}\Bigl(
      (\boldsymbol\varepsilon\nabla u, \nabla w)_K
      - (u,\boldsymbol b\cdot\nabla w)_K
      + (\alpha u,w)_K
    \Bigr)
    \\
    &\quad + \sum_{F\in\mathcal F_h^0}\Bigl(
      -(\boldsymbol\varepsilon\{\nabla u\}\cdot\boldsymbol n_F,[w])_F
      -([u],\boldsymbol\varepsilon\{\nabla w\}\cdot\boldsymbol n_F)_F
      \\
    &\qquad\qquad\qquad\qquad
      +\left(\gamma_F\bigl\|\boldsymbol\varepsilon\bigr\|_{L^\infty(F)}
             +\frac{|\boldsymbol b\cdot\boldsymbol n_F|}{2}\right)([u],[w])_F
      +(\boldsymbol b\cdot\boldsymbol n_F\{u\},[w])_F
    \Bigr)
    \\
    &\quad + \sum_{F\in\mathcal F_h^b}\Bigl(
      -(\boldsymbol\varepsilon\nabla u\cdot\boldsymbol n_F,w)_F
      -(u,\boldsymbol\varepsilon\nabla w\cdot\boldsymbol n_F)_F
      +\left(\gamma_F\bigl\|\boldsymbol\varepsilon\bigr\|_{L^\infty(F)}
             +(\boldsymbol b\cdot\boldsymbol n_F)^+\right)(u,w)_F
    \Bigr).
  \end{aligned}
\end{equation}
Here, $\gamma_F>0$ is the penalty parameter we define later. We use the standard
notation that $({\cdot},\,{\cdot})_K$ denotes the $L^2$-inner product on an
element $K$, with $({\cdot},\,{\cdot})_F$ defined analogously on a face $F$.
Moreover,
$(\boldsymbol{b}\cdot\boldsymbol{n}_F)^+ =
\max(\boldsymbol{b}\cdot\boldsymbol{n}_F,\,0)$ denotes the positive part of the
convective normal flux. Now, we can introduce the space-time discrete
variational formulation.
\begin{problem}[Space-Time Discrete variational form of anisotropic \(hp\)-DG for CDR]
\label{prob:discrete_variational}
Find \(u_{\tau h}\in\mathcal{X}_{\tau h}^{\boldsymbol k,\boldsymbol{p}}\) such that for all
\(w_{\tau h}\in\mathcal{X}_{\tau h}^{\boldsymbol k,\boldsymbol{p}}\)
\begin{equation}\label{eq:discrete_variational_abstract}
  A_{\tau h}(u_{\tau h}, w_{\tau h}) = F_{\tau h}(w_{\tau h}),
\end{equation}
where the bilinear form splits into temporal and spatial parts as
\begin{equation}
  \label{eq:discrete_variational_form}
  \begin{aligned}
    A_{\tau h}(u_{\tau h}, w_{\tau h})
    &\coloneq
    \sum_{n=1}^{N_I}\int_{I_n}\Bigl(
      (\partial_t u_{\tau h}, w_{\tau h})
      + a_s^{\gamma}(u_{\tau h}, w_{\tau h})
    \Bigr)\,\drv t
    \\
    &\quad
    + \sum_{n=1}^{N_I-1} \bigl(([u_{\tau h}]_{n}, w_{\tau h}(t_n^+))\bigr)
    + \bigl((u_{\tau h}(0^+), w_{\tau h}(0^+))\bigr),
  \end{aligned}
\end{equation}
and the linear form is given by
\begin{equation}
  \label{eq:linear_form}
  \begin{aligned}
    F_{\tau h}(w_{\tau h})
    &\coloneq
    \sum_{n=1}^{N_I}\int_{I_n} (f,w_{\tau h})\,\drv t
    + (u_0,w_{\tau h}(0^+))
    \\
    &\quad
    + \sum_{n=1}^{N_I}\int_{I_n}
      \sum_{F\in\mathcal F_h^b}\Bigl(
        \left(\frac{\gamma\boldsymbol\varepsilon}{h_F}
               +(\boldsymbol b\cdot\boldsymbol n_F)^+\right)(g,w_{\tau h})_F
        -(u_N,w_{\tau h})_F
      \Bigr)\,\drv t\,.
  \end{aligned}
\end{equation}
\end{problem}
All integrals are taken over space and time, and the jumps \([u_{\tau h}]_n\) and traces \(u_{\tau h}(0^+)\) are understood with respect to the DG-in-time discretization.
We derive the local and global algebraic systems resulting from the anisotropic
\(hp\)-DG discretization of the CDR
equations~\eqref{eq:discrete_variational_form}. To this end, we define the
spatial mass matrix \(\symbf M_h\) and stiffness matrix \(\symbf A_h\)
associated with~\eqref{eq:spatial_dg_bilinear_form} by
\[
  \symbf M_h \coloneq ((\phi_j,\phi_i))_{i,j=1}^{N_{\symbf x}}, \qquad
  \symbf A_h \coloneq (a_s^{\gamma}(\phi_j,\phi_i))_{i,j=1}^{N_{\symbf x}},
\]
where \(\{\phi_i\}_{i=1}^{N_{\symbf x}}\) and \(\{\phi_j\}_{j=1}^{N_{\symbf x}}\subset \mathcal V_h^{\boldsymbol{p}}\) denotes
a nodal Lagrangian basis of \(\mathcal V_h^{\boldsymbol{p}}\).

\paragraph{Penalty parameter scaling.}
For robustness on anisotropic $hp$-meshes we choose the symmetric
interior-penalty parameter on a face $F$ shared by $K^+$ and $K^{-}$ as
\[
  \gamma_F = C_{\text{pen}}\frac{1}{2}\Biggl( \frac{p_{K^+,i}(p_{K^+,i}+1)}{h_{F}^{K^+}}+ \frac{p_{K^-,i}(p_{K^-,i}+1)}{h_{F}^{K^-}} \Biggr),
\]
with $\gamma_F$ halved on boundary faces (only one neighbor). Here, $h_{F}^{K}$
denotes the directional mesh size of $K$ normal to $F$ and,
for \(K\in\{K^+,K^-\}\), $p_F^{K}$ is a representative polynomial degree
\[
  p_F^{K}
  \coloneq
  \max\Bigl\{1,\,
      \frac{1}{d-1}\sum_{\substack{i=1 \\ i\neq i_F}}^{d} p_{K,i}
      \Bigr\},
\]
where \(i_F\in\{1,\dots,d\}\) is the index of the coordinate direction normal to
\(F\). This scaling is similar to the ones proposed
in~\cite{georgoulis_discontinuous_2007,georgoulis_discontinuous_2008} and aligns
with anisotropic SIP theory and ensures stability uniformly in $h$, $p$, and
moderate anisotropy.

\paragraph{Local Algebraic Problem.}
Upon numerical integration of the discrete variational formulation in
Prob.~\ref{prob:discrete_variational}, we obtain local algebraic systems on
each interval \(I_n\). We define the right-hand side vector \(\mat F_n\in\mathbb{R}^{(k_n+1)N_{\symbf x}}\) as
\[
  \mat F_n=(\vct f_n^1,\dots,\vct f_n^{k_n+1})^\top,
\]
with components given by
\[
(\vct f_n^i)_j=\int_{I_n}\left((f, \phi_j\xi_{n,i}) - (\boldsymbol\varepsilon\nabla (\phi_j\xi_{n,i})\cdot\boldsymbol n,g)_{\Gamma_D} - \frac{\gamma}{h_F}(g,\phi_j\xi_{n,i})_{\Gamma_D}\right)\drv t.
\]

\begin{problem}[Local Algebraic System Anisotropic \(hp\)-DG for CDR]\label{prob:primal-local}
Find \(\vct u_n=(\vct u_n^1,\dots,\vct u_n^{k_n+1})^\top\), with \(\vct u_n^i\in\mathbb{R}^{N_{\symbf x}}\), such that
\[
  (\tau_n\symbf M_\tau^{k_n}\otimes \symbf A_h + \symbf A_\tau^{k_n}\otimes
  \symbf M_h)\vct u_n=\mat F_n+\symbf m_{\tau}^{k_n}\otimes \symbf M_h \vct u_{n-1}^{k_{n-1}+1},
\]
with given initial vector \(\vct u_0^{k_0+1}\).
\end{problem}

\paragraph{Global Algebraic Problem}
For a straightforward derivation of the adjoint problem, we construct the global algebraic system for the full time interval \(I\).
Define the coupling matrix \(\symbf B_n\coloneq \symbf e_{k_n+1}^\top\otimes
\symbf m_{\tau}^{k_n}\otimes \symbf M_h\), which corresponds to the temporal jumps. The global algebraic system is then represented as follows.
\begin{problem}[Global Algebraic System Anisotropic \(hp\)-DG for CDR]\label{prob:primal-global}
Find \((\vct u_1,\dots,\vct u_{N_I})^\top\) solving
\[
\begin{pmatrix}
\symbf S_1 & & &\\
-\symbf B_2 & \symbf S_2 & &\\
 & \ddots & \ddots &\\
 & & -\symbf B_{N_I} & \symbf S_{N_I}
\end{pmatrix}
\begin{pmatrix}
\vct u_1\\
\vct u_2\\
\vdots\\
\vct u_{N_I}
\end{pmatrix}=
\begin{pmatrix}
\mat F_1+\symbf m_{\tau}^{k_1}\otimes \symbf M_h \vct u_0^{k_0+1}\\
\mat F_2\\
\vdots\\
\mat F_{N_I}
\end{pmatrix},
\]
with block matrices defined as \(\symbf S_n=\tau_n\symbf M_\tau^{k_n}\otimes \symbf A_h + \symbf A_\tau^{k_n}\otimes \symbf M_h\).
\end{problem}

We interpret the time marching scheme as forward substitution in the global
system given in Prob.~\ref{prob:primal-global}. This facilitates a straightforward derivation of the global adjoint
problem.
\paragraph{Adjoint Algebraic System}
We now derive the corresponding adjoint system from the global formulation by
transposing the global system in Prob.~\ref{prob:primal-global}. We describe
this on the discrete level: Based on the global slabwise DG bilinear
form~\eqref{eq:discrete_variational_form} in
Prob.~\ref{prob:discrete_variational}, we define the \emph{adjoint variational
  problem} by the formal Hilbert adjoint
\begin{equation}\label{eq:discrete_adjoint_variational_abstract}
  A^\ast(z_{\tau h},\,v_{\tau h})\coloneq A(v_{\tau h},\,z_{\tau h}),\qquad v_{\tau h},\,z_{\tau h}\in\mathcal{X}_{\tau h}^{\boldsymbol k,\boldsymbol{p+1}}.
\end{equation}
In particular, we do \emph{not} integrate by parts in space or time. Therefore,
the convection direction and temporal orientation are not reversed. Then, the
adjoint discretization is, by design, the transpose of the global primal
algebraic system in Prob.~\ref{prob:primal-global}. This yields an exact
discrete-adjoint pairing for DWR.\@

For the implementation of the DWR approach, we define the discrete representation
of the target functional \(J(u)\) on each interval \(I_n\) analogously to the
right-hand side as
\[
  (\vct J_n^i)_k = \int_{I_n}\left( j, \phi_k\xi_{n,i} \right)\drv t,
\]
leading to the global vector
\(\mat J_n=(\vct J_n^1,\dots,\vct J_n^{k_n+1})^\top\). In the implementation we
also support $J(u)$ to be space–time point evaluations. For brevity, we only
consider goal functionals that can be evaluated as space–time integrals.

\begin{problem}[Global Adjoint System Anisotropic \(hp\)-DG for CDR]\label{prob:adjoint-variational}
Let $\symbf S_n$, $\symbf B_n$, and the right-hand side blocks be as in the primal global system of Prob.~\ref{prob:primal-global}. The discrete adjoint unknowns are the vectors $\vct z_n\in\mathbb R^{(k_n+1)N_{\symbf x}}$, and the global adjoint system is defined by transposing the primal system:
\[
\begin{pmatrix}
\symbf S_1^\top & -\symbf B_2^\top & & \\
 & \symbf S_2^\top & -\symbf B_3^\top & \\
 & & \ddots & \ddots \\
 & & & \symbf S_{N_I}^\top
\end{pmatrix}
\begin{pmatrix}
\vct z_1\\
\vct z_2\\
\vdots\\
\vct z_{N_I}
\end{pmatrix} =
\begin{pmatrix}
  \mat J_1\\
  \mat J_2\\
  \vdots\\
  \mat J_{N_I}
\end{pmatrix},
\]
with $\mat J_n$ the goal load on $I_n$.
\end{problem}
The transpose of the global primal system in
Prob.~\ref{prob:primal-global} yields a lower-triangular system.
Consequently, the adjoint is solved by backward substitution. The per‑slab
adjoint system stated below is the corresponding block row of the transposed
global system. At each step of the backward substitution, we solve a local slab
problem. This mirrors the primal solve, but it is not a time‑marching scheme
since the time derivative acts on the test function.
\begin{problem}[Local Adjoint System Anisotropic \(hp\)-DG for CDR]\label{prob:adjoint-variational-local}
On interval \(I_n\), solve
\[
  (\tau_n(\symbf M_\tau^{k_n})^\top\otimes \symbf A_h + (\symbf A_\tau^{k_n})^\top\otimes \symbf M_h)\vct z_n = (\symbf M_\tau^{k_n})\otimes \symbf M_h \vct J_n + \symbf e_{k_n+1}\otimes((\symbf m_{\tau}^{k_n})^\top\otimes \symbf M_h)\vct z_{n+1},
\]
with \(\vct z_{N_I+1}\coloneq 0\).
\end{problem}

\section{A Posteriori Error Representation}\label{sec:error_rep}
In this section, we present a goal-oriented a posteriori error representation
based on the Dual Weighted Residual (DWR) method for the space-time DG
discretizations considered here. The error is measured with respect to a
user-chosen goal functional $J \in \mathcal C^3(\mathcal X,\,\mathbb R)$ of the
form
\begin{equation}\label{eq:def:goal-functional}
  J(u) = \int_0^T J_t(u(t))\,\mathrm dt + J_T(u(T))\,,
\end{equation}
where $J_t$ and $J_T$ are three-times continuously differentiable functionals;
see, e.g.,~\cite[Ch.~4]{BruchhaeuserB22}. We state the main error representation
adapted to our setting and refer to~\cite[Ch.~4]{BruchhaeuserB22} for a detailed
derivation for CDR equations.

For the DWR framework we introduce Lagrangians on the continuous, semi-discrete
and fully discrete levels. With $A(\cdot)(\cdot)$ and $F(\cdot)$ as in the weak
formulation~\eqref{eq:A_def}, \eqref{eq:linear_form}, and with $A_\tau(\cdot,\cdot)$ and
$F_\tau(\cdot)$ denoting their time-semi-discrete counterparts, and
$A_{\tau h}(\cdot)(\cdot)$ the fully discrete bilinear form of
\eqref{eq:discrete_variational_form}, we define
\begin{subequations}\label{eq:def:lagrangians}
\begin{align}
  \mathcal L(u, z) &\coloneq J(u) + F(z) - A(u)(z)\,,\\
  \mathcal L_\tau(u_\tau, z_\tau) &\coloneq J(u_\tau) + F_\tau(z_\tau) - A_\tau(u_\tau)(z_\tau)\,,\\
  \mathcal L_{\tau h}(u_{\tau h}, z_{\tau h}) &\coloneq J(u_{\tau h}) + F_{\tau h}(z_{\tau h}) - A_{\tau h}(u_{\tau h})(z_{\tau h})\,.
\end{align}
\end{subequations}
Here, $(u, z)$, $(u_\tau, z_\tau)$, and $(u_{\tau h}, z_{\tau h})$ are primal/adjoint
pairs at the continuous, time-semi-discrete, and fully discrete levels,
respectively. Directional (Gâteaux) derivatives of the Lagrangians with respect
to the second argument yield the primal problems on each level, while
derivatives with respect to the first argument yield the corresponding adjoint
problems; see also~\cite{BruchhaeuserBR12}. This representation motivates the
discretization choices in Sec.~\ref{sec:st-discretization} and, in particular, the
discrete adjoint obtained by transposition of the global primal system
(Prob.~\ref{prob:primal-global}), see Sec.~\ref{prob:adjoint-variational}.

We further define the time-semi-discrete primal and adjoint residuals by
\begin{subequations}\label{eq:def:tau-residuals}
\begin{align}
  \rho_\tau(u)(\varphi) &\coloneq \mathcal L'_{\tau, z}(u, z)(\varphi) = F_\tau(\varphi) - A_\tau(u)(\varphi)\,,\\
  \rho_\tau^{\ast}(u, z)(\varphi) &\coloneq \mathcal L'_{\tau, u}(u, z)(\varphi) = J'(u)(\varphi) - A_\tau'(u)(\varphi, z)\,.
\end{align}
\end{subequations}
Here
$A_\tau: \mathcal X_\tau^{\boldsymbol k}(V_0)\times \mathcal X_\tau^{\boldsymbol k}(V_0)\to\mathbb
R$ denotes the time–semi‑discrete DG bilinear form obtained by applying DG in
time to the weak form~\eqref{eq:A_def} (including slab-interface jump terms).
$A_{\tau h}: \mathcal X_{\tau h}^{\boldsymbol k,\,\boldsymbol p}\times
\mathcal X_{\tau h}^{\boldsymbol k,\,\boldsymbol p}\to\mathbb R$ and
$F_{\tau h}: \mathcal X_{\tau h}^{\boldsymbol k,\,\boldsymbol p}\to\mathbb R$
are the the fully discrete forms. For the error representation, it suffices that
$(u_{\tau h},z_{\tau h})$ satisfies the discrete stationarity conditions
$\mathcal L'_{\tau h}(u_{\tau h},z_{\tau h})=0$; the concrete assembly of
$A_{\tau h}$ and $F_{\tau h}$ is detailed later in Sec.~\ref{sec:st-discretization}.

\begin{theorem}[Goal-oriented error representation]\label{thm:go-error-rep}
Let $(u, z)$, $(u_\tau, z_\tau)$, and $(u_{\tau h}, z_{\tau h})$ be stationary points
of $\mathcal L$, $\mathcal L_\tau$, and $\mathcal L_{\tau h}$, respectively. Then,
for arbitrary $(\tilde u_\tau, \tilde z_\tau)$ in the time-semi-discrete space
and $(\tilde u_{\tau h}, \tilde z_{\tau h})$ in the fully discrete space, the
goal-error admits the representations
\begin{subequations}\label{eq:go-rep}
\begin{align}
  J(u) - J(u_\tau)
  &= \tfrac12\,\rho_\tau(u_\tau)(z - \tilde z_\tau)
   + \tfrac12\,\rho_\tau^{\ast}(u_\tau, z_\tau)(u - \tilde u_\tau)
   + \mathcal R_\tau\,,\label{eq:go-rep-tau}\\
  J(u_\tau) - J(u_{\tau h})
  &= \tfrac12\,\rho_\tau(u_{\tau h})(z_\tau - \tilde z_{\tau h})
   + \tfrac12\,\rho_\tau^{\ast}(u_{\tau h}, z_{\tau h})(u_\tau - \tilde u_{\tau h})
   + \mathcal R_h\,,\label{eq:go-rep-h}
\end{align}
\end{subequations}
where $\mathcal R_\tau$ and $\mathcal R_h$ are higher-order remainders with respect
to $(u - u_\tau,\, z - z_\tau)$ and $(u_\tau - u_{\tau h},\, z_\tau - z_{\tau h})$,
respectively.
\end{theorem}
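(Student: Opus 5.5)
The plan follows the abstract DWR argument of Becker and Rannacher. The one ingredient needed is an error identity for stationary points of a Lagrangian in a pair of nested (or Galerkin-orthogonal) spaces. I would first prove this abstract lemma and then apply it twice: once with $(\mathcal L,\mathcal L_\tau)$ for \eqref{eq:go-rep-tau}, and once with $(\mathcal L_\tau,\mathcal L_{\tau h})$ for \eqref{eq:go-rep-h}. The differing bilinear forms are handled by viewing the discrete solutions as Galerkin approximations of the coarser-level Lagrangian. Concretely, I use that $(u_\tau,z_\tau)$ satisfies $\mathcal L_\tau'(u_\tau,z_\tau)=0$. I also use that the DG forms are consistent: $A_\tau(u)(\varphi)=A(u)(\varphi)$ and $F_\tau(\varphi)=F(\varphi)$ for $\varphi$ in the semi-discrete space, since the jump terms vanish for the continuous solution. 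Likewise $A_\tau$ and $A_{\tau h}$ coincide on the fully discrete space, up to the consistent SIP terms, which vanish for $u_\tau$.

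For the abstract lemma, let $x=(u,z)$ and $x_\tau=(u_\tau,z_\tau)$, and write $L=\mathcal L$ extended to the broken space by $\mathcal L_\tau$. By consistency $L(x)=J(u)$, since $F(z)-A(u)(z)=0$, and $L(x_\tau)=J(u_\tau)$. Hence $J(u)-J(u_\tau)=L(x)-L(x_\tau)$. The trapezoidal rule with remainder gives
\[
L(x)-L(x_\tau)=\int_0^1 L'(x_\tau+s e)(e)\,\mathrm ds=\tfrac12 L'(x)(e)+\tfrac12 L'(x_\tau)(e)+\mathcal R,
\]
where $e=x-x_\tau$ and $\mathcal R=\tfrac12\int_0^1 L'''(x_\tau+se)(e,e,e)\,s(s-1)\,\mathrm ds$. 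The remainder is cubic in the error and is well defined because $J\in\mathcal C^3$ and $A$ is bilinear.

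The next step is to rewrite the two derivative terms. First, $L'(x)(e)=0$ by stationarity of $x$, provided $L'(x)$ vanishes on the broken directions; this again follows from consistency. Second, $L'(x_\tau)(e)=\rho_\tau(u_\tau)(z-z_\tau)+\rho_\tau^\ast(u_\tau,z_\tau)(u-u_\tau)$. Finally, Galerkin orthogonality $\mathcal L_\tau'(x_\tau)(\tilde x_\tau)=0$ lets me replace $z_\tau$ and $u_\tau$ in the weights by arbitrary $\tilde z_\tau$ and $\tilde u_\tau$.

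This yields the total $L'(x_\tau)(e)$ rather than the symmetric half-weights in \eqref{eq:go-rep-tau}. To match the stated form, I would instead keep $\tfrac12 L'(x)(e)$ and note that $L'(x)(e)-L'(x_\tau)(e)$ is a second-order term. That term can be absorbed into $\mathcal R_\tau$, since $J$ is smooth and $A$ bilinear, so the derivative difference is linear in $e$ up to higher order. This gives exactly $\tfrac12\rho_\tau+\tfrac12\rho_\tau^\ast+\mathcal R_\tau$. Equivalently, one writes $L'(x_\tau)(e)$ as the average of the primal and adjoint residual parts using the standard identity $\rho(u_\tau)(z-z_\tau)=\rho^\ast(u_\tau,z_\tau)(u-u_\tau)+\mathcal O(|e|^2)$. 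That identity follows from the algebra $F(z-z_\tau)-A(u_\tau)(z-z_\tau)=A(u-u_\tau)(z-z_\tau)+\dots$.

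The same steps at the discrete level give \eqref{eq:go-rep-h}. There $\mathcal L_\tau$ plays the role of $L$, and $\mathcal L_{\tau h}$ coincides with it on $\mathcal X_{\tau h}^{\boldsymbol k,\boldsymbol p}$ by consistency of the SIP/upwind terms.

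I expect the main obstacle to be making the consistency step rigorous in the nonconforming setting. The fully discrete spaces are not subspaces of $\mathcal X$, because they are spatially discontinuous and the Dirichlet data are imposed weakly. So one must verify that $\mathcal L_{\tau h}$ extends $\mathcal L_\tau$ in the sense that the relevant derivatives agree along the error directions. I would handle this by defining all Lagrangians on the broken sum space and checking, via elementwise integration by parts, that the jump and penalty terms vanish when the first argument is the smoother coarser-level solution.
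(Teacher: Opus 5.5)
Your core argument is the standard Becker--Rannacher lemma. The paper gives no proof of its own and defers to \cite[Ch.~4]{BruchhaeuserB22}, which uses exactly this lemma. Your argument already proves \eqref{eq:go-rep-tau} before you stop trusting it. Once $L'(x)(e)=0$, the trapezoidal identity leaves $\tfrac12 L'(x_\tau)(e)+\mathcal R$. By your own formula, $\tfrac12 L'(x_\tau)(e)=\tfrac12\rho_\tau(u_\tau)(z-z_\tau)+\tfrac12\rho_\tau^\ast(u_\tau,z_\tau)(u-u_\tau)$, and Galerkin orthogonality then inserts $\tilde z_\tau,\tilde u_\tau$. These are exactly the stated half-weights, with $\mathcal R_\tau=\tfrac12\int_0^1 L'''(x_\tau+se)(e,e,e)\,s(s-1)\,\mathrm{d}s$.

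The genuine error is the paragraph that follows. It diagnoses a nonexistent factor-of-two mismatch and then repairs it in a way that is not valid. The difference $L'(x)(e)-L'(x_\tau)(e)=\int_0^1 L''(x_\tau+se)(e,e)\,\mathrm{d}s$ is quadratic in $e$. That is the same order as the residual terms themselves, each of which is a product of a primal and a dual error, so it cannot be a higher-order remainder.

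Concretely, take linear $J$, so that $L'''=0$. Consistency and Galerkin orthogonality give $\rho_\tau(u_\tau)(z-z_\tau)=\rho_\tau^\ast(u_\tau,z_\tau)(u-u_\tau)=A_\tau(u-u_\tau)(z-z_\tau)=J(u)-J(u_\tau)$. On the other hand, $L'(x)(e)-L'(x_\tau)(e)=-2A_\tau(u-u_\tau)(z-z_\tau)$. So the correct representation is exact with $\mathcal R_\tau=0$, and your absorption would hide a fixed multiple of the error itself inside $\mathcal R_\tau$. For the same reason, an identity $\rho=\rho^\ast+\mathcal O(|e|^2)$ carries no information. The Becker--Rannacher version has a cubic remainder, and you do not need it here. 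Simply delete that paragraph.

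For \eqref{eq:go-rep-h}, the functional playing the role of $L$ must be the DG Lagrangian $\mathcal L_{\tau h}$ on the sum of the semi-discrete and fully discrete spaces, with $\rho_\tau$ in \eqref{eq:go-rep-h} read as its residuals. $\mathcal L_{\tau h}$ agrees with $\mathcal L_\tau$ on spatially conforming arguments, not on $\mathcal X_{\tau h}^{\boldsymbol k,\boldsymbol p}$, where the jump and penalty terms are nonzero. So your sentence claiming the two coincide on $\mathcal X_{\tau h}^{\boldsymbol k,\boldsymbol p}$ has it backwards. What you must check is $\mathcal L_{\tau h}(u_\tau,z_\tau)=J(u_\tau)$ and $\mathcal L_{\tau h}'(u_\tau,z_\tau)=0$ in fully discrete directions, i.e., primal and adjoint consistency of the SIP/upwind form. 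This requires enough spatial regularity of $u_\tau,z_\tau$ for the normal-flux traces on faces, which is what your last paragraph is heading towards.
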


The choices of $(\tilde u_\tau, \tilde z_\tau)$ and $(\tilde u_{\tau h}, \tilde z_{\tau h})$
are free and will be instantiated later to obtain temporally and directionally
split estimators. In particular, we will use temporal enrichments via lifting
and anisotropic spatial restrictions to separate the goal error into temporal
and directional spatial parts suitable for anisotropic $hp$ marking. %

\section{Anisotropic Error Estimation}\label{sec:anisotropic-ee}
Building on the error representation (Sec.~\ref{sec:error_rep}) and the
space-time discretization (Sec.~\ref{sec:st-discretization}), we use temporal
enrichment and anisotropic spatial restriction to obtain a directional split of
the goal error. The resulting error estimators drive the anisotropic $hp$
marking strategy.

We proceed as follows. First, we define anisotropic restriction operators in
space and a temporal enrichment by lifting (Sec.~\ref{subsection:AnIsoHighInt}).
Next, we connect enrichment and restriction to the DWR representation to obtain
temporally and directionally split indicators. These indicators control the
anisotropic $hp$ marking strategy described in Sec.~\ref{sec:algorithm}.

\subsection{Anisotropic Restriction Operations}\label{subsection:AnIsoHighInt}
In this section, we use an anisotropic broken $L^2$-projection into a directionally restricted finite element space as restriction operator.

  \begin{definition}[The restriction]\label{def:restriction}
  Let $v_h \in \mathcal{V}_h^{\boldsymbol{p}+\boldsymbol{k}}$ as in~\eqref{eq:p-aniso-gl+1}.
  The local restriction operator $\Riso{p_K,k}:\mathbb{Q}_{p_K+k}(K) \to \mathbb{Q}_{p_K}(K)$ is the local $L^2$-projection, i.e., $\Riso{p_K,k}v_h \in  \mathbb{Q}_{p_K}(K)$ solves
  \begin{equation}
    \int_{K} (\Riso{p_K,k} v_h) \, \phi_h \, \mathrm{d}x = \int_{K} v_h \, \phi_h \, \mathrm{d}x \qquad \forall \phi_h \in  \mathbb{Q}_{p_K}(K),
  \end{equation}
  for a fixed $K \in \mathcal{T}_h$.
  The global restriction operator $\Riso{\boldsymbol
    p,\boldsymbol{k}}:\mathcal{V}_h^{\boldsymbol{p}+\boldsymbol{k}} \to
  \mathcal{V}_h^{\boldsymbol{p}}$ is realized elementwise using the broken $L^2$-projection, i.e., $\Riso{\boldsymbol p,\boldsymbol{k}}v_h \in \mathcal{V}_h^{\boldsymbol{p}}$ solves
  \begin{equation}
    \int_{K} (\Riso{\boldsymbol p,\boldsymbol{k}} v_h) \, \phi_h \, \mathrm{d}x = \int_{K} v_h \, \phi_h \, \mathrm{d}x \qquad \forall \phi_h \in \mathcal{V}_h^{\boldsymbol{p}}, \; \forall K \in \mathcal{T}_h.
  \end{equation}
\end{definition}

  \begin{definition}[The directional restriction]\label{def:directional-restriction}
  For a fixed $k\in\mathbb N$ and $i\in\{1,\dots,d\}$, let $v_h \in
  \mathcal{V}_h^{\boldsymbol{p}+\boldsymbol{k}}$ (cf.~\eqref{eq:p-aniso-gl+1-ei}). The local
  directional restriction operator
  $\Rdir{p_K,k}{i}:\mathbb{Q}_{p_K+k}(K)\to \mathbb{Q}_{p_K+k-ke_i}(K)$
  (cf.~\eqref{eq:p-aniso-gl+1-ei}) is the local $L^2$-projection, i.e.,
  $\Rdir{p_K,k}{i}v_h \in \mathbb{Q}_{p_K+k-ke_i}(K)$ solves
  \[
    \int_{K} (\Rdir{p_K,k}{i} v_h)\,\phi_h\,dx = \int_{K} v_h\,\phi_h\,dx
    \qquad \forall \phi_h \in \mathbb{Q}_{p_K+k-ke_i}(K).
  \]
  The global directional restriction operator $\Rdir{\boldsymbol p,\boldsymbol k}{i}:\mathcal{V}_h^{\boldsymbol{p}+\boldsymbol{k}} \to \mathcal{V}_h^{\boldsymbol{p}+\boldsymbol{k},i}$
  is realized elementwise by the broken $L^2$-projection, i.e., $\Rdir{\boldsymbol p,\boldsymbol k}{i}v_h \in \mathcal{V}_h^{\boldsymbol{p}+\boldsymbol{k},i}$ satisfies
  \[
    \int_{K} (\Rdir{\boldsymbol p,\boldsymbol k}{i} v_h)\,\phi_h\,dx = \int_{K} v_h\,\phi_h\,dx
    \qquad \forall \phi_h \in \mathcal{V}_h^{\boldsymbol{p}+\boldsymbol{k},i},\ \forall K\in\mathcal T_h.
  \]
\end{definition}

Based on Definitions~\ref{def:restriction} and~\ref{def:directional-restriction},
we introduce the isotropic error operator as the remainder obtained by
comparing the isotropic restriction with the family of directional restrictions.
\begin{definition}[Isotropic Error Part]
  Identifying $\mathcal V_h^{\boldsymbol p}$ and
  $\mathcal V_h^{\boldsymbol p+\boldsymbol k,i}$ with subspaces of
  $\mathcal V_h^{\boldsymbol p+\boldsymbol k}$ via the canonical injection, we
  define
  $$
  \mathbb{E}^{\boldsymbol{p}+\boldsymbol{k}}
  :=(d-1)I+\Riso{\boldsymbol p,\boldsymbol k}-\sum_{i=1}^d \Rdir{\boldsymbol p,\boldsymbol
    k}{i}\,.
  $$
  The localized version $\mathbb{E}^{p_K+k}$ is given by
  $$\mathbb{E}^{{p_K}+{k}}:=(d-1)I+\Riso{p_K,k}-\sum_{i=1}^d \Rdir{p_K,k}{i}.$$
\end{definition}

\begin{lemma} \label{lemma: integral_identity}
For $k \in \mathbb{N}\cup \{0\}$, it holds
\begin{equation}
\int_{-1}^1 (1-x^2)^k\, \textrm{d} x = \textrm{B}\left(\frac{1}{2},k+1\right)=\frac{\sqrt{\pi}\Gamma\left(k+1\right)}{\Gamma\left(k+\frac{3}{2}\right)}=\frac{2^{2k}(k!)^2}{(2k+1)!},
\end{equation}
where $\textrm{B}$ is the beta function and $\Gamma$ is the gamma function.
\begin{proof}
    It holds
    \begin{equation}
        \begin{aligned}
            \int_{-1}^1 (1-x^2)^k\, \textrm{d} x =
            2\int_{0}^1 (1-x^2)^k\, \textrm{d} x =
            \int_{0}^1 t^{-\frac{1}{2}}(1-t)^k\, \textrm{d} t = \textrm{B}\left(\frac{1}{2},k+1\right).
        \end{aligned}
  \end{equation}
  The other equations follows from the properties of the beta function $\textrm{B}$, the gamma function $\Gamma$ and $\Gamma\left(\frac{1}{2}\right)=\sqrt{\pi}$.
\end{proof}
\end{lemma}

\begin{theorem} \label{thm: isotropic part}
    Let $K = \bigotimes_ {i=1}^d (0,h_i)$ and let $p_K \coloneq (p_1, \ldots, p_d)$.
    Then, for the  isotropic error $\Vert \mathbb{E}^{\boldsymbol{p}+\boldsymbol{1}}v_h \Vert_{L^2({K})}$, it holds
    \begin{equation}
      \Vert \mathbb{E}^{\boldsymbol{p}+\boldsymbol{1}}v_h \Vert_{L^2({K})} \leq \max_{i,j\in \{1,\dots,d\},\, i\not=j} C_{i,j}\Vert \partial_{x_i}^{p_i+1} \partial_{x_j}^{p_j+1} {v_h}({x})\Vert_{L^2(K)} h_i^{p_i+1}h_j^{p_j+1},
    \end{equation}
    with $v_h \in \mathcal{V}_h^{\boldsymbol{p}+\boldsymbol{1}}$
    and
    $$C_{i,j}^2\coloneq\frac{d^2(d-1)^2(2d-1)}{48(2p_i+2)!(2p_j+2)! 2^{(2p_i+2)+(2p_j+2)}} \textrm{B}\left(\frac{1}{2},p_i+2\right)\textrm{B}\left(\frac{1}{2},p_j+2\right).$$
    \begin{proof}
      First, let
      \[
        \A{p_K}:=\{\alpha\in\mathbb N_0^d:\ \alpha_i\le p_i\ \forall i\},
        \qquad
        \A{p_K+1}:=\{\alpha\in\mathbb N_0^d:\ \alpha_i\le p_i+1\ \forall i\},
      \]
      and for $k\in\{0,\dots,d\}$ define
      \(
        \AZ{p_K+1}{k}
        :=\left\{\alpha\in\A{p_K+1}:\ \big|\{i:\alpha_i=p_i+1\}\big|=k\right\}.
      \)
\begin{figure}[t]
  \centering
  \begin{tikzpicture}[scale=1.25,
    x=0.78cm, y=0.78cm,
    font=\small,
    >=stealth,
    axis/.style   ={line width=0.9pt, -stealth},
    frame/.style  ={line width=1.0pt},
    A0/.style     ={fill=blue!18,   draw=none},
    A1/.style     ={fill=orange!35, draw=none},
    A2/.style     ={fill=red!45,    draw=none},
    sep/.style    ={draw=gray!55, line width=0.9pt},
    lab/.style    ={align=center, inner sep=1.5pt}
  ]
  \def\P{7}
  \def\Q{4}
  \path[A0] (0,0) rectangle (\P-1,\Q-1);
  \path[A1] (\P-1,0) rectangle (\P,\Q-1);
  \path[A1] (0,\Q-1) rectangle (\P-1,\Q);
  \path[A2] (\P-1,\Q-1) rectangle (\P,\Q);
  \draw[frame] (0,0) rectangle (\P,\Q);
  \draw[sep] (\P-1,0) -- (\P-1,\Q);
  \draw[sep] (0,\Q-1) -- (\P,\Q-1);
  \draw[axis] (-0.55,0) -- (\P+0.75,0) node[right] {$\alpha_1$};
  \draw[axis] (0,-0.55) -- (0,\Q+0.75) node[above] {$\alpha_2$};
  \node[lab] at ({(\P-1)/2},{(\Q-1)/2})
    {$\AZ{p_K+1}{0}$};
  \node[lab] at (\P-0.5,{(\Q-1)/2}) {$\AZ{p_K+1}{1}$};
  \node[lab] at ({(\P-1)/2},\Q-0.5) {$\AZ{p_K+1}{1}$};
  \node[lab] at (\P-0.5,\Q-0.5) {$\AZ{p_K+1}{2}$};
  \node[
    anchor=south west,
    draw=gray!35,
    rounded corners=1.5pt,
    fill=white,
    fill opacity=0.94,
    text opacity=1,
    inner sep=3pt
  ] at ({\P+0.25},0.36) {%
  \begin{tabular}{@{}l@{\;}l@{}}
  \multicolumn{2}{@{}c@{}}{$\A{p_K+1}=\AZ{p_K+1}{0}\cup\AZ{p_K+1}{1}\cup\AZ{p_K+1}{2}$}\\[3pt]
  \tikz\fill[A0] (0,0) rectangle (0.25,0.25); &
  $\AZ{p_K+1}{0}=\A{p_K}$\\[3pt]
  \tikz\fill[A1] (0,0) rectangle (0.25,0.25); &
  $\AZ{p_K+1}{1}=\A{p_K+1}\setminus\left(\A{p_K}\cup \AZ{p_K+1}{2}\right)$\\[3pt]
  \tikz\fill[A2] (0,0) rectangle (0.25,0.25); &
  $\AZ{p_K+1}{2}=\{(p_1 + 1,p_2 + 1)\}$
  \end{tabular}
  };
  \end{tikzpicture}
  \caption{Visualization of the sets $\AZ{p_K+1}{k}$ for $d=2$. The union of all sets is $\A{p_K+1}$.\label{fig:multiindices}}
\end{figure}
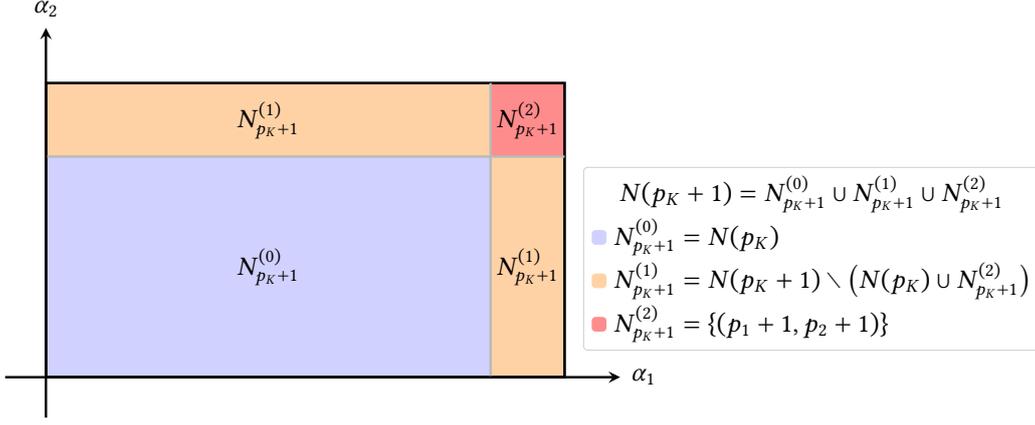
See Fig.~\ref{fig:multiindices} for a visualization of these multiindex sets for $d=2$.
    Furthermore, let $T_K: \hat{K} \to K$ be implicitly defined via
    \begin{equation}
      [T_K^{-1}(x)]_i = \hat x_i \coloneq \frac{2x_i-h_i}{h_i}.
    \end{equation}
    For $v_h \in \mathcal{V}_h^{\boldsymbol{p}+\boldsymbol{1}}$ and an arbitrary but fixed $K \in \mathcal{T}_h$, we define $\hat{v}\coloneq v_{h,|K} \circ T_K$.
    Furthermore, let $L_\alpha:\hat{K} \to \mathbb{R}$ be the
    tensor-product Legendre-Polynomial of degree $\alpha \in \mathbb{N}^d$; see \cite{Schwab1998_hpFEM}. Since $\hat{v}$ is a multivariate polynomial of degree $p_K+1=(p_1+1,\dots,p_d+1)$, the polynomial $\hat{v}$ can be written as
  \begin{equation}
  \hat{v}(\hat{x})=\sum_{\boldsymbol{\alpha}\in \A{p_K+1}} \innerpart L_{\boldsymbol{\alpha}}(\hat{x}) \quad \textrm{ with} \quad \innerpart :=  \int_{\hat{K}} \hat{v}(\hat{y}) L_{\boldsymbol{\alpha}}(\hat{y})\, \textrm{d} \hat{y} \prod_{i=1}^d \frac{2\alpha_i+1}{2},
\end{equation}
where $\boldsymbol{\alpha}\coloneq(\alpha_1, \dots, \alpha_d) \in \mathbb{N}_0^d$
and $L_{\boldsymbol{\alpha}}(\hat{x})\coloneq \prod_{i=1}^{d}
L_{\alpha_i}(\hat{x}_i)$.
With a slight abuse of notation, we denote by $\mathrm{R}^{p_K}_{h}$ and
$\mathrm{R}^{p_K}_{h,i}$ also the corresponding projectors on $\hat K$
obtained by pullback with $T_K$.
Then, one can show that
\begin{equation}
  \mathrm{R}^{p_K}_{h} \hat{v}(\hat{x})\coloneq
  \sum_{\boldsymbol{\alpha}\in \A{p_K}} \innerpart L_{\boldsymbol{\alpha}}(\hat{x}),
\end{equation}
and
\begin{equation}
  \mathrm{R}^{p_K}_{h,i} \hat{v}(\hat{x})\coloneq
  \sum_{{\boldsymbol{\alpha}\in \A{p_K+1-e_i}}} \innerpart L_{\boldsymbol{\alpha}}(\hat{x}).
\end{equation}
From the definition of $\mathbb{E}^{\boldsymbol{p}+\boldsymbol{1}}$ we know that
\begin{equation}
  \begin{aligned}
    \mathbb{E}^{p_K+1}:=(d-1)I+\mathrm{R}^{p_K}_{h}-\sum_{i=1}^d \mathrm{R}^{p_K}_{h,i}.
  \end{aligned}
\end{equation}
Therefore, we get that
\begin{equation}\label{eq:proof:Ep}
  \begin{aligned}
    \mathbb{E}^{\boldsymbol{p}+\boldsymbol{1}}\hat{v}(\hat{x})&\coloneq(d-1)\sum_{\boldsymbol{\alpha} \in \A{p_K+1}} \innerpart L_{\boldsymbol{\alpha}}(\hat{x})
       +\sum_{\boldsymbol{\alpha}\in \A{p_K}} \innerpart L_{\boldsymbol{\alpha}}(\hat{x})-\sum_{i=1}^d \sum_{\boldsymbol{\alpha}\in \A{p_K+1-e_i}} \innerpart L_{\boldsymbol{\alpha}}(\hat{x})
      \\&=d\sum_{\boldsymbol{\alpha} \in \A{p_K+1}} \innerpart L_{\boldsymbol{\alpha}}(\hat{x})
      \\&\qquad-\sum_{\boldsymbol{\alpha}\in (\A{p_K+1}\setminus\A{p_K})} \innerpart L_{\boldsymbol{\alpha}}(\hat{x})-\sum_{i=1}^d \sum_{\boldsymbol{\alpha}\in \A{p_K+1-e_i}} \innerpart L_{\boldsymbol{\alpha}}(\hat{x})
      \\&=\sum_{i=1}^d\sum_{\boldsymbol{\alpha} \in (\A{p_K+1}\setminus \A{p_K+1-e_i})} \innerpart L_{\boldsymbol{\alpha}}(\hat{x})
    -\sum_{\boldsymbol{\alpha}\in (\A{p_K+1}\setminus\A{p_K})} \innerpart L_{\boldsymbol{\alpha}}(\hat{x}).
  \end{aligned}
\end{equation}
We observe that $\A{p_K+1}\setminus \A{p_K}=\{\boldsymbol{\alpha}\in \A{p_K+1}: \exists i \in \{1,\ldots,d\},\, \alpha_i=p_i+1 \}$ and $\A{p_K+1}\setminus \A{p_K+1-e_i}=\{\boldsymbol{\alpha}\in \A{p_K+1}: \alpha_i=p_i+1 \}$, see also~Fig.~\ref{fig:multiindices}.
Then, we see that
\begin{equation}\label{eq:proof:Ep single faces}
  \begin{aligned}
    \sum_{i=1}^d\sum_{\boldsymbol{\alpha} \in (\A{p_K+1}\setminus \A{p_K+1-e_i})} \innerpart L_{\boldsymbol{\alpha}}(\hat{x})=
    \sum_{k=1}^d k \sum_{\boldsymbol{\alpha} \in \AZ{p_K+1}{k}}\innerpart L_{\boldsymbol{\alpha}}(\hat{x}),
  \end{aligned}
\end{equation}
and
\begin{equation}\label{eq:proof:Ep all max faces}
  \begin{aligned}
    \sum_{\boldsymbol{\alpha} \in (\A{p_K+1}\setminus \A{p_K})} \innerpart L_{\boldsymbol{\alpha}}(\hat{x})=
    \sum_{k=1}^d \sum_{\boldsymbol{\alpha} \in \AZ{p_K+1}{k}}\innerpart L_{\boldsymbol{\alpha}}(\hat{x}).
    \end{aligned}
  \end{equation}
Using~\eqref{eq:proof:Ep all max faces} and~\eqref{eq:proof:Ep single faces} we can rewrite~\eqref{eq:proof:Ep} as
\begin{equation} \label{eqn: split of Ev}
  \begin{aligned}
    \mathbb{E}^{\boldsymbol{p}+\boldsymbol{1}}\hat{v}(\hat{x})\coloneq	\sum_{k=1}^d (k-1) \sum_{\boldsymbol{\alpha} \in \AZ{p_K+1}{k}}\innerpart L_{\boldsymbol{\alpha}}(\hat{x})
    = \sum_{k=1}^{d-1} k \sum_{\boldsymbol{\alpha} \in \AZ{p_K+1}{k+1}}\innerpart L_{\boldsymbol{\alpha}}(\hat{x}).
  \end{aligned}
\end{equation}
Let $\hat{x}_i'\coloneq (\hat x_1, \ldots, \hat x_{i-1}, \hat x_{i+1}, \ldots, \hat x_d)$.
Using \cite[Lemma 3.10]{Schwab1998_hpFEM} and the tensor product property of $L_{\boldsymbol{\alpha}}$ and $\hat{K}$
we get that for $\alpha$ with $\alpha_i=p_i+1$
\begin{equation}
     \sum_{\substack{\boldsymbol\alpha \\ \alpha_i=p_i+1}} \innerpart^2 \prod_{\substack{j=1}}^d \frac{2}{2\alpha_j+1}=
     \frac{1}{(2p_i+2)!}\int_{ \bigotimes_{\substack{j=1 \\j \not=i}}^d[-1,1]} \int_{-1}^1 |\partial_{ \hat{x}_i}^{p_i+1}\hat{v}(\hat{x})|^2 (1-\hat{x}_i^2)^k \,\textrm{d} \hat{x}_i \textrm{d} \hat{x}_i'.
 \end{equation}
 In combination with Lemma~\ref{lemma: integral_identity} and that $\partial_{\hat{x}_i}^{p_i+1}\hat{v}(\hat{x})$ is constant with respect to $\hat{x}_i$, it holds
 \begin{equation}
     \sum_{\substack{\boldsymbol\alpha \\ \alpha_i=p_i+1}} \innerpart^2 \prod_{\substack{j=1}}^d \frac{2}{2\alpha_j+1}= \frac{1}{(2p_i+2)!} \textrm{B}\left(\frac{1}{2},p_i+2\right)
     \int_{ \bigotimes_{\substack{j=1 \\j \not=i}}^d[-1,1]}  |\partial_{ \hat{x}_i}^{p_i+1}\hat{v}(\hat{x})|^2 \,\textrm{d} \hat{x}_i'.
 \end{equation}
 Finally, we can deduce that
  \begin{equation}
     \sum_{\substack{\boldsymbol\alpha \\ \alpha_i=p_i+1}} \innerpart^2 \prod_{\substack{j=1}}^d \frac{2}{2\alpha_j+1}= \frac{1}{(2p_i+2)!2} \textrm{B}\left(\frac{1}{2},p_i+2\right)
     \int_{ \hat{K}}  |\partial_{ \hat{x}_i}^{p_i+1}\hat{v}(\hat{x})|^2 \,\textrm{d} \hat{x}.
 \end{equation}
For $\alpha$ where $\alpha_i=p_i+1$ and $\alpha_j=p_j+1$, one can show in a similar fashion
\begin{equation}
     \sum_{\substack{\boldsymbol\alpha \\ \alpha_i=p_i+1\\\alpha_j=p_j+1}} \innerpart^2 \prod_{\substack{k=1}}^d \frac{2}{2\alpha_k+1}= \frac{1}{(2p_i+2)!(2p_j+2)!4} \textrm{B}\left(\frac{1}{2},p_i+2\right)\textrm{B}\left(\frac{1}{2},p_j+2\right)
     \int_{ \hat{K}}  |\partial_{ \hat{x}_i}^{p_i+1}\partial_{ \hat{x}_j}^{p_j+1}\hat{v}(\hat{x})|^2 \,\textrm{d} \hat{x}.
 \end{equation}
 Finally from \cite[(3.3.6)]{Schwab1998_hpFEM} and \eqref{eqn: split of Ev}, we know that
 \begin{equation}
     \Vert \mathbb{E}^{\boldsymbol{p}+\boldsymbol{1}}\hat{v} \Vert_{L^2(\hat{K})}^2=\sum_{k=1}^{d-1} k^2 \sum_{\boldsymbol{\alpha} \in \AZ{p_K+1}{k+1}}\innerpart^2 \prod_{\substack{l=1}}^d \frac{2}{\alpha_l+1}.
 \end{equation}
 This concludes that
 \begin{equation}
     \Vert \mathbb{E}^{\boldsymbol{p}+\boldsymbol{1}}\hat{v} \Vert_{L^2(\hat{K})}^2\leq \sum_{k=1}^{d-1} k^2 \max_{\scriptsize\substack{\begin{aligned} i,j &\in \{1,\dots,d\}\\i &\neq j\end{aligned}}} \tilde{C}_{i,j}
  \Vert\partial_{ \hat{x}_i}^{p_i+1}\partial_{ \hat{x}_j}^{p_j+1}\hat{v}\Vert_{L^2(\hat{K})}^2,
 \end{equation}
 where $$\tilde{C}_{i,j}\coloneq\frac{d(d-1)}{(2p_i+2)!(2p_j+2)!8} \textrm{B}\left(\frac{1}{2},p_i+2\right)\textrm{B}\left(\frac{1}{2},p_j+2\right).$$
 Finally, we have
 \begin{equation}
 \begin{aligned}
      \Vert \mathbb{E}^{\boldsymbol{p}+\boldsymbol{1}}v_h \Vert_{L^2({K})}^2 &=\Vert \mathbb{E}^{\boldsymbol{p}+\boldsymbol{1}}\hat{v} \Vert_{L^2(\hat{K})}^2\prod_{i=1}^d\frac{h_i}{2}\\
      &\leq \frac{d(d-1)(2d-1)}{6}\max_{\scriptsize\substack{\begin{aligned} i,j &\in \{1,\dots,d\}\\i &\neq j\end{aligned}}}\tilde{C}_{i,j}
  \Vert\partial_{ \hat{x}_i}^{p_i+1}\partial_{ \hat{x}_j}^{p_j+1}\hat{v}\Vert_{L^2(\hat{K})}^2\prod_{k=1}^d\frac{h_k}{2}\\
  &=  \frac{d(d-1)(2d-1)}{6}\max_{\scriptsize\substack{\begin{aligned} i,j &\in \{1,\dots,d\}\\i &\neq j\end{aligned}}}\tilde{C}_{i,j}
  \Vert\partial_{{x}_i}^{p_i+1}\partial_{ {x}_j}^{p_j+1}{v_h}\Vert_{L^2({K})}^2 \left(\frac{h_i}{2}\right)^{2(p_i+1)}\left(\frac{h_j}{2}\right)^{2(p_j+1)}
 \end{aligned}
 \end{equation}
 The proof is completed by taking the square root.
    \end{proof}
\end{theorem}
  \begin{corollary} \label{coro: isotropic part 2d}
    Let $d=2$, $K = (0,h_1)\times (0,h_2)$ and $p_K \coloneq (p_1, p_2)$.
    Then, for the  isotropic error $\Vert \mathbb{E}^{\boldsymbol{p}+\boldsymbol{1}}v_h \Vert_{L^2({K})}$, it holds
    \begin{equation}
      \Vert \mathbb{E}^{\boldsymbol{p}+\boldsymbol{1}}v_h \Vert_{L^2({K})} = C_{1,2}\Vert \partial_{x_1}^{p_1+1} \partial_{x_2}^{p_2+1} {v_h}({x})\Vert_{L^2(K)} h_1^{p_1+1}h_2^{p_2+1},
    \end{equation}
    with $v_h \in \mathcal{V}_h^{\boldsymbol{p}+\boldsymbol{1}}$
    and
    $$C_{1,2}^2\coloneq\frac{1}{4(2p_1+2)!(2p_2+2)! 2^{(2p_1+2)+(2p_2+2)}} \textrm{B}\left(\frac{1}{2},p_1+2\right)\textrm{B}\left(\frac{1}{2},p_2+2\right).$$
    \begin{proof}
    Follow the same steps as in Theorem~\ref{thm: isotropic part} and verify that the inequalities become equalities.
    \end{proof}
    \end{corollary}
\begin{remark}
Even though Theorem~\ref{thm: isotropic part} and Corollary~\ref{coro: isotropic part 2d} are proven for
hyperrectangles \(K=\bigotimes_{i=1}^d(0,h_i)\), the same estimate extends to parallelepipeds, i.e.,
affine images of the reference cell \(\hat K\).
More precisely, let \(T_K:\hat K\to K\) be an affine bijection of the form
\(
T_K(\hat x)=B\hat x+b,
\)
with constant Jacobian matrix \(B\in\mathbb R^{d\times d}\).
Then the scaling of derivatives is governed by \(B^{-1}\),
and the constants in the above bounds depend on \(B\). Since the Jacobian is constant, the change-of-variables arguments used in the proof
carry over verbatim.
\end{remark}

\subsection{Spatial and Temporal enrichment}
\paragraph{Temporal enrichment via lifting}
In the $hp$-adaptive setting we use a local-in-time map that turns a piecewise
DG($k_n$) function into a continuous function of one higher local degree while
preserving the DG nodal values at the right Gauss–Radau points on each slab
$I_n=(t_{n-1},t_n]$. Following the Radau lifting
in~\cite[Sec.~3.2, (3.11)–(3.14)]{ern_discontinuous_2016}, we construct on every slab the \emph{unique}
polynomial $\vartheta_n\in\mathbb P_{k_n+1}(I_n)$ that equals $1$ at the left endpoint
$t_{n-1}^+$ and vanishes at the $(k_n+1)$ right-sided Gauss–Radau nodes
$\{t_{n,\mu}\}_{\mu=1}^{k_n+1}\subset\overline{I_n}$ (with $t_{n,k_{n}+1} =t_n$). Subtracting the temporal
jump multiplied by $\vartheta_n$ cancels the discontinuity while leaving all
Radau nodal values intact, and raises the local degree by one; see also the use
of $L_\tau$ in the DG-in-time formulation in~\cite[Lem.~3.2]{ern_discontinuous_2016}.

\begin{definition}[Lifting-based temporal enrichment]
Let $B$ be a Banach space and $\boldsymbol k=(k_n)_{n=1}^N$. For $w_\tau\in\mathcal X_\tau^{\boldsymbol k}(B)$ define $\mathfrak L_\tau w_\tau\in \mathcal X_\tau^{\boldsymbol k+\boldsymbol 1}(B)\cap C^0(\overline I;B)$ by
\[
(\mathfrak L_\tau w_\tau)(t):= w_\tau(t)\;-\;[w_\tau]_{n-1}\,\vartheta_n(t)\quad\text{for }t\in I_n,\qquad
(\mathfrak L_\tau w_\tau)(0):=w_\tau(0),
\]
where $[w_\tau]_{n-1}:=\lim_{t\to t_{n-1}^+}w_\tau(t)-\lim_{t\to t_{n-1}^-}w_\tau(t)$ and $\vartheta_n\in\mathbb P_{k_n+1}(I_n)$ satisfies
\[
\vartheta_n(t_{n-1}^+)=1,\qquad \vartheta_n(t_{n,\mu})=0\,,\quad\mu=1,\dots,k_n+1.
\]
\end{definition}
\begin{remark}[Properties of the temporal enrichment]
  \hfill
  \begin{description}[leftmargin=*,labelsep=1em,itemsep=3pt,parsep=0pt,topsep=3pt]
  \item[Continuity and interpolation.] On each slab $I_n$,
    $(\mathfrak L_\tau w_\tau)(t_{n-1}^+) = w_\tau(t_{n-1}^-)$ and
    $(\mathfrak L_\tau w_\tau)(t_{n,\mu}) = w_\tau(t_{n,\mu})$ for
    $\mu=1,\dots,k_n+1$, hence $\mathfrak L_\tau w_\tau$ is continuous on
    $\overline I$ and preserves all right Radau
    values~\cite[(3.12)–(3.14)]{ern_discontinuous_2016}.
  \item[Degree elevation.]
    $(\mathfrak L_\tau w_\tau)\restrict{I_n}\in\mathbb P_{k_n+1}(I_n;B)$, so the
    local temporal degree increases by
    one~\cite[(3.11)–(3.12)]{ern_discontinuous_2016}.
  \item[Locality and block structure.] $\mathfrak L_\tau$ acts slabwise and
    depends only on the jump $[w_\tau]_{n-1}$; globally it is block-diagonal
    with respect to the slabs~\cite[Sec.~3.2]{ern_discontinuous_2016}.
  \item[$L^2$-stability.] There exists $C_L\ge 1$, depending only on $\max_n k_n$ and the (reference) Radau nodes, such that
    \[
      \|\mathfrak L_\tau v\|_{L^2(I;B)} \le C_L\,\|v\|_{L^2(I;B)}\qquad\forall v\in\mathcal X_\tau^{\boldsymbol k}(B).
    \]
    The constant is independent of $h$ and of the time-step sizes. This follows
    by scaling of~\cite[(3.13)]{ern_discontinuous_2016} and standard DG
    jump–trace bounds (see the discussion
    in~\cite[Sec.~3.2]{ern_discontinuous_2016}).
  \item[Lifting on $\hat I$.] On the interval $\hat I=(-1,1)$ the lifting polynomial can be written explicitly as
    \[
      \hat{\vartheta}(\hat t) = \prod_{\mu=1}^{k_n+1}\frac{\hat t-\hat t_\mu}{-1-\hat t_\mu}\,,
    \]
    where $\{\hat t_\mu\}$ are the right Gauss–Radau nodes on $\hat I$ (cf.~\cite[(3.13)]{ern_discontinuous_2016}). Mapping $I_n\to\hat I$ yields
    $\vartheta_n$. The stability follows by scaling.
  \end{description}
\end{remark}

\paragraph{Spatial enrichment through higher-order solution}
We realize spatial enrichment by \emph{solving} the primal and the adjoint problems
on the higher-order space $\mathcal V_h^{\boldsymbol p+\boldsymbol 1}$. These
enriched solutions are used to build the error estimators.
\begin{definition}[Spatial enrichment]\label{def:spatial-enrichment}
  Let $\mathcal V_h^{\boldsymbol p}$ be the current spatial
  space~\eqref{eq:p-aniso-gl} and $\mathcal V_h^{\boldsymbol p+\boldsymbol 1}$
  the space with all local degrees raised by one~\eqref{eq:p-aniso-gl+1}. For a
  fixed temporal discretization and global bilinear form
  $A_{\tau h}(\cdot)(\cdot)$ with right-hand side $F_{\tau h}$, the \emph{primal}
  enriched solution
  $u_{\tau h}^{\boldsymbol p+\boldsymbol 1}\in\mathcal X_{\tau h}^{k,\boldsymbol p+\boldsymbol 1}$ is defined by
  \[
    A_{\tau h}(u_{\tau h}^{\boldsymbol p+\boldsymbol 1})(w)=F_{\tau h}(w)\qquad\forall\,w\in\mathcal X_{\tau h}^{k,\boldsymbol p+\boldsymbol 1}.
  \]
  For a goal functional $J$, the \emph{adjoint} enriched
  solution
  $z_{\tau h}^{\boldsymbol p+\boldsymbol 1}\in\mathcal X_{\tau h}^{k,\boldsymbol p+\boldsymbol 1}$ solves
  \[
    A_{\tau h}(w)(z_{\tau h}^{\boldsymbol p+\boldsymbol 1})=J'(u_{\tau h}^{\boldsymbol p+\boldsymbol 1})(w)\qquad\forall\,w\in\mathcal X_{\tau h}^{k,\boldsymbol p+\boldsymbol 1}.
  \]
\end{definition}

\paragraph{Connection to anisotropic DWR estimators}
We use the lifting $\mathfrak L_\tau$ as the concrete temporal enrichment in the
time–estimator components; i.e., in the formulas
\eqref{eq: implemented error estimator-tau} and
\eqref{eq: implemented error estimator-tau+1} we take
$\mathrm{E}_{\tau}^{(\boldsymbol k+\boldsymbol 1)}\coloneq\mathfrak L_\tau$.
On the spatial side, all enrichment terms are evaluated with the enriched
solutions $u_{\tau h}^{\boldsymbol p+\boldsymbol 1}$ and
$z_{\tau h}^{\boldsymbol p+\boldsymbol 1}$ from
Def.~\ref{def:spatial-enrichment}. Directional splitting is effected with the
directional restrictions $\mathrm{R}_{h,i}$, so that differences of the form
$v-\mathrm{R}_{h,i}v$ isolate the contribution in direction $i$.
This realization yields the temporal indicators $\eta_{\tau}^{\boldsymbol p}$,
$\eta_{\tau}^{\boldsymbol p+\boldsymbol 1}$ and the directional spatial
indicators $\{\eta_{h,i}^{\boldsymbol p+\boldsymbol 1}\}_{i=1}^d$ and
$\{\eta_{h,i}^{\boldsymbol p}\}_{i=1}^d$ used for anisotropic $hp$ marking.
Here, $\eta_{h,i}^{\boldsymbol p+\boldsymbol 1}$ is evaluated in the higher-order
space of degree $\boldsymbol p+\boldsymbol 1$, and $\eta_{h,i}^{\boldsymbol p}$ denotes
the corresponding indicator evaluated after restricting the discrete solution to
degree $\boldsymbol p$. This pair enables the definition of a local saturation
indicator, which measures how much of the estimated error is already captured on
the current space and how much is reduced by $p$-enrichment.

\subsection{Anisotropic Goal Oriented Error Estimation}

In this section, we briefly describe the theoretical background of the error estimator.
\begin{theorem}\label{thm:identity-space-time}
  Let $u_{\tau h}^{\boldsymbol p}$ be an approximation of the discrete solution and $u$ be the continuous solution. Furthermore let $u_{\tau h}^{\boldsymbol p+\boldsymbol 1}, z_{\tau h}^{\boldsymbol p},z_{\tau h}^{\boldsymbol p+\boldsymbol 1}$ and $z_{\tau h}^{\boldsymbol p}$ be arbitrary.
  Then it holds
   \begin{equation}
    \begin{aligned}
      J\left(u\right)-J\left(u_{\tau h}^{\boldsymbol p}\right)=J\left(u\right)
      -J\left(\mathrm{E}_{\tau}^{\left(\boldsymbol k+\boldsymbol 1\right)}u_{\tau h}^{\boldsymbol p+\boldsymbol 1}\right)+\eta_{h}^{\boldsymbol{p}+\boldsymbol{1}} +\eta_{h,\approx}^{\boldsymbol{p}+\boldsymbol{1}}
      +\eta_{\tau}^{\boldsymbol{p}+\boldsymbol{1}}
      +\eta_{\tau,\approx}^{\boldsymbol{p}+\boldsymbol{1}},
    \end{aligned}
  \end{equation}
  where
    \begin{alignat*}{2}
      \eta_{h}^{\boldsymbol{p}+\boldsymbol{1}} &\coloneq \eta_{\tau h}\left(u_{\tau h}^{\boldsymbol p},u_{\tau h}^{\boldsymbol p+\boldsymbol 1},z_{\tau h}^{\boldsymbol p+\boldsymbol 1},u_{\tau h}^{\boldsymbol p},z_{\tau h}^{\boldsymbol p}\right)\,, \\[3pt]
      \eta_{h,\approx}^{\boldsymbol{p}+\boldsymbol{1}} &\coloneq \eta_{\tau h,\approx}\left(u_{\tau h}^{\boldsymbol p},u_{\tau h}^{\boldsymbol p+\boldsymbol 1},z_{\tau h}^{\boldsymbol p+\boldsymbol 1},u_{\tau h}^{\boldsymbol p},z_{\tau h}^{\boldsymbol p}\right) +\eta_{k}\left(u_{\tau h}^{\boldsymbol p},z_{\tau h}^{\boldsymbol p}\right) +\eta_{\mathcal{R}}\left(u_{\tau h}^{\boldsymbol p+\boldsymbol 1},z_{\tau h}^{\boldsymbol p+\boldsymbol 1},u_{\tau h}^{\boldsymbol p},z_{\tau h}^{\boldsymbol p}\right) \,, \\[3pt]
      \eta_{\tau}^{\boldsymbol p+\boldsymbol 1} &\coloneq \eta_{\tau h}\left(u_{\tau h}^{\boldsymbol p+\boldsymbol 1},\mathrm{E}_{\tau}^{\left(\boldsymbol k+\boldsymbol 1\right)}u_{\tau h}^{\boldsymbol p+\boldsymbol 1},\mathrm{E}_{\tau}^{\left(\boldsymbol k+\boldsymbol 1\right)}z_{\tau h}^{\boldsymbol p+\boldsymbol 1},u_{\tau h}^{\boldsymbol p+\boldsymbol 1},z_{\tau h}^{\boldsymbol p+\boldsymbol 1}\right) \,, \\[3pt]
      \eta_{\tau,\approx}^{\boldsymbol{p}+\boldsymbol{1}} &\coloneq \eta_{\tau h,\approx}\left(u_{\tau h}^{\boldsymbol p+\boldsymbol 1},\mathrm{E}_{\tau}^{\left(\boldsymbol k+\boldsymbol 1\right)}u_{\tau h}^{\boldsymbol p+\boldsymbol 1},\mathrm{E}_{\tau}^{\left(\boldsymbol k+\boldsymbol 1\right)}z_{\tau h}^{\boldsymbol p+\boldsymbol 1},u_{\tau h}^{\boldsymbol p+\boldsymbol 1},z_{\tau h}^{\boldsymbol p+\boldsymbol 1}\right) +\eta_{k}\left(u_{\tau h}^{\boldsymbol p+\boldsymbol 1},z_{\tau h}^{\boldsymbol p+\boldsymbol 1}\right) 
      \\&+\eta_{\mathcal{R}}\left(\mathrm{E}_{\tau}^{\left(\boldsymbol k+\boldsymbol 1\right)}u_{\tau h}^{\boldsymbol p+\boldsymbol 1},\mathrm{E}_{\tau}^{\left(\boldsymbol k+\boldsymbol 1\right)}z_{\tau h}^{\boldsymbol p+\boldsymbol 1},u_{\tau h}^{\boldsymbol p},z_{\tau h}^{\boldsymbol p}\right)\,,
    \end{alignat*}
    and
   \begin{equation*}
    \begin{alignedat}{2}
      \eta_{\tau h}\left(u,v,z,v_{\tau h},z_{\tau_h}\right)&\coloneq&&\frac{1}{2}\rho_{\tau}\left(u\right)\left(z-z_{\tau h}\right)
      +
      \frac{1}{2}\rho_{\tau}^*\left(u,z_{\tau h}\right)\left(v-v_{\tau h}\right)\,,\\
      \eta_{\tau h,\approx}\left(u,v,z,v_{\tau h},z_{\tau_h}\right)&\coloneq{}&&
      \frac{1}{2}
      \rho_{\tau}\left(u\right)\left(z+z_{\tau h}\right)
      +\frac{1}{2}\rho_{\tau}^*\left(u,z\right)\left(v-v_{\tau h}\right)\,,\\
      \eta_{k}\left(u,z\right)&\coloneq&&
      -\rho_{\tau}\left(u\right)\left(z\right)\,,\\ %
      \eta_{\mathcal{R}}\left(u,z,u_{\tau h},z_{\tau h}\right)&\coloneq&&
      \int_{0}^{1} \big[J'''\left(u-s\left(u-u_{\tau h}\right)\right)\left(u-u_{\tau h},u-u_{\tau h},u-u_{\tau h}\right)\\
      &&&-A'''\left(u-s\left(u-u_{\tau h}\right)\right)\left(u-u_{\tau h},u-u_{\tau h},u-u_{\tau h},z-s\left(z-z_{\tau h}\right)\right)\\
      &&&-3A''\left(u-s\left(u-u_{\tau h}\right)\right)\left(u-u_{\tau h},u-u_{\tau h},z-z_{\tau h}\right)\big]\left(s-1\right)s\,\mathrm{d} s\,.
    \end{alignedat}
  \end{equation*}
  \begin{proof}
    Using Thm. 7 of~\cite{endtmayer_chapter_2024} we obtain
     \begin{equation*}
      \begin{alignedat}{2}
        J\left(u\right)-J\left(u_{\tau h}^{\boldsymbol p}\right)&={}&&J\left(u\right)
        -J\left(\mathrm{E}_{\tau}^{\left(\boldsymbol k+\boldsymbol 1\right)}u_{\tau h}^{\boldsymbol p+\boldsymbol 1}\right)
        +J\left(\mathrm{E}_{\tau}^{\left(\boldsymbol k+\boldsymbol 1\right)}u_{\tau h}^{\boldsymbol p+\boldsymbol 1}\right)
        -J\left(u_{\tau h}^{\boldsymbol p+\boldsymbol 1}\right)
        +J\left(u_{\tau h}^{\boldsymbol p+\boldsymbol 1}\right)-J\left(u_{\tau h}^{\boldsymbol p}\right)\\[3pt] %
        &={}&&J\left(u\right)
          -J\left(\mathrm{E}_{\tau}^{\left(\boldsymbol k+\boldsymbol 1\right)}u_{\tau h}^{\boldsymbol p+\boldsymbol 1}\right)\\
        &&& +
        \frac{1}{2}\rho_{\tau}\left(u_{\tau h}^{\boldsymbol p+\boldsymbol 1}\right)\left(\mathrm{E}_{\tau}^{\left(\boldsymbol k+\boldsymbol 1\right)}z_{\tau h}^{\boldsymbol p+\boldsymbol 1}-z_{\tau h}^{\boldsymbol p+\boldsymbol 1}\right)
        +
        \frac{1}{2}\rho_{\tau}^*\left(u_{\tau h}^{\boldsymbol p+\boldsymbol 1},z_{\tau h}^{\boldsymbol p+\boldsymbol 1}\right)\left(\mathrm{E}_{\tau}^{\left(\boldsymbol k+\boldsymbol 1\right)}u_{\tau h}^{\boldsymbol p+\boldsymbol 1}-u_{\tau h}^{\boldsymbol p+\boldsymbol 1}\right)\\
        &&&-	\rho_{\tau}\left(u_{\tau h}^{\boldsymbol p+\boldsymbol 1}\right)\left(z_{\tau h}^{\boldsymbol p+\boldsymbol 1}\right)
        +
        \frac{1}{2}
        \rho_{\tau}\left(\mathrm{E}_{\tau}^{\left(\boldsymbol k+\boldsymbol 1\right)}u_{\tau h}^{\boldsymbol p+\boldsymbol 1}\right)\left(\mathrm{E}_{\tau}^{\left(\boldsymbol k+\boldsymbol 1\right)}z_{\tau h}^{\boldsymbol p+\boldsymbol 1}+z_{\tau h}^{\boldsymbol p+\boldsymbol 1}\right) \\
        &&&+
            \frac{1}{2}\rho_{\tau}^*\left(\mathrm{E}_{\tau}^{\left(\boldsymbol k+\boldsymbol 1\right)}u_{\tau h}^{\boldsymbol p+\boldsymbol 1},\mathrm{E}_{\tau}^{\left(\boldsymbol k+\boldsymbol 1\right)}z_{\tau h}^{\boldsymbol p+\boldsymbol 1}\right)\left(\mathrm{E}_{\tau}^{\left(\boldsymbol k+\boldsymbol 1\right)}u_{\tau h}^{\boldsymbol p+\boldsymbol 1}-u_{\tau h}^{\boldsymbol p+\boldsymbol 1}\right)\\
                                          &&&+\eta_{\mathcal{R}}\left(\mathrm{E}_{\tau}^{\left(\boldsymbol k+\boldsymbol 1\right)}u_{\tau h}^{\boldsymbol p+\boldsymbol 1},\mathrm{E}_{\tau}^{\left(\boldsymbol k+\boldsymbol 1\right)}z_{\tau h}^{\boldsymbol p+\boldsymbol 1},u_{\tau h}^{\boldsymbol p+\boldsymbol 1},z_{\tau h}^{\boldsymbol p+\boldsymbol 1}\right)\\ %
        &&&+
        \frac{1}{2}\rho_{\tau}\left(u_{\tau h}^{\boldsymbol p}\right)\left(z_{\tau h}^{\boldsymbol p+\boldsymbol 1}-z_{\tau h}^{\boldsymbol p}\right)
        +
        \frac{1}{2}\rho_{\tau}^*\left(u_{\tau h}^{\boldsymbol p},z_{\tau h}^{\boldsymbol p}\right)\left(u_{\tau h}^{\boldsymbol p+\boldsymbol 1}-u_{\tau h}^{\boldsymbol p}\right)-	\rho_{\tau}\left(u_{\tau h}^{\boldsymbol p}\right)\left(z_{\tau h}^{\boldsymbol p}\right)\\
        &&&+
        \frac{1}{2}
        \rho_{\tau}\left(u_{\tau h}^{\boldsymbol p+\boldsymbol 1}\right)\left(z_{\tau h}^{\boldsymbol p+\boldsymbol 1}+z_{\tau h}^{\boldsymbol p}\right)
        +
          \frac{1}{2}\rho_{\tau}^*\left(u_{\tau h}^{\boldsymbol p+\boldsymbol 1},z_{\tau h}^{\boldsymbol p+\boldsymbol 1}\right)\left(u_{\tau h}^{\boldsymbol p+\boldsymbol 1}-u_{\tau h}^{\boldsymbol p}\right) +\eta_{\mathcal{R}}\left(u_{\tau h}^{\boldsymbol p+\boldsymbol 1},z_{\tau h}^{\boldsymbol p+\boldsymbol 1},u_{\tau h}^{\boldsymbol p},z_{\tau h}^{\boldsymbol p}\right)\\[3pt] %
        &={}&&J\left(u\right)
        -J\left(\mathrm{E}_{\tau}^{\left(\boldsymbol k+\boldsymbol 1\right)}u_{\tau h}^{\boldsymbol p+\boldsymbol 1}\right)
        +\eta_{\tau h}\left(u_{\tau h}^{\boldsymbol p+\boldsymbol 1},\mathrm{E}_{\tau}^{\left(\boldsymbol k+\boldsymbol 1\right)}u_{\tau h}^{\boldsymbol p+\boldsymbol 1},\mathrm{E}_{\tau}^{\left(\boldsymbol k+\boldsymbol 1\right)}z_{\tau h}^{\boldsymbol p+\boldsymbol 1},u_{\tau h}^{\boldsymbol p+\boldsymbol 1},z_{\tau h}^{\boldsymbol p+\boldsymbol 1}\right) \\
        &&&+\eta_{\tau h,\approx}\left(u_{\tau h}^{\boldsymbol p+\boldsymbol 1},\mathrm{E}_{\tau}^{\left(\boldsymbol k+\boldsymbol 1\right)}u_{\tau h}^{\boldsymbol p+\boldsymbol 1},\mathrm{E}_{\tau}^{\left(\boldsymbol k+\boldsymbol 1\right)}z_{\tau h}^{\boldsymbol p+\boldsymbol 1},u_{\tau h}^{\boldsymbol p+\boldsymbol 1},z_{\tau h}^{\boldsymbol p+\boldsymbol 1}\right) +\eta_{k}\left(u_{\tau h}^{\boldsymbol p+\boldsymbol 1},z_{\tau h}^{\boldsymbol p+\boldsymbol 1}\right)\\
        &&&+\eta_{\mathcal{R}}\left(\mathrm{E}_{\tau}^{\left(\boldsymbol k+\boldsymbol 1\right)}u_{\tau h}^{\boldsymbol p+\boldsymbol 1},\mathrm{E}_{\tau}^{\left(\boldsymbol k+\boldsymbol 1\right)}z_{\tau h}^{\boldsymbol p+\boldsymbol 1},u_{\tau h}^{\boldsymbol p+\boldsymbol 1},z_{\tau h}^{\boldsymbol p+\boldsymbol 1}\right) \\%spatial
        &&&
        +\eta_{\tau h}\left(u_{\tau h}^{\boldsymbol p},u_{\tau h}^{\boldsymbol p+\boldsymbol 1},z_{\tau h}^{\boldsymbol p+\boldsymbol 1},u_{\tau h}^{\boldsymbol p},z_{\tau h}^{\boldsymbol p}\right) +\eta_{\tau h,\approx}\left(u_{\tau h}^{\boldsymbol p},u_{\tau h}^{\boldsymbol p+\boldsymbol 1},z_{\tau h}^{\boldsymbol p+\boldsymbol 1},u_{\tau h}^{\boldsymbol p},z_{\tau h}^{\boldsymbol p}\right) \\
        &&&+\eta_{k}\left(u_{\tau h}^{\boldsymbol p},z_{\tau h}^{\boldsymbol p}\right)
        +\eta_{\mathcal{R}}\left(u_{\tau h}^{\boldsymbol p+\boldsymbol 1},z_{\tau h}^{\boldsymbol p+\boldsymbol 1},u_{\tau h}^{\boldsymbol p},z_{\tau h}^{\boldsymbol p}\right) \\
      \end{alignedat}
    \end{equation*}
  \end{proof}
\end{theorem}
\begin{remark}
  According to Thm.~\ref{thm:identity-space-time}, the error $J\left(u\right) - J\left(u_{\tau h}^{\boldsymbol{p}}\right)$ can be split into the error $J\left(u\right) - J\left(\mathrm{E}_{\tau}^{\left(\boldsymbol{k} + \boldsymbol{1}\right)} u_{\tau h}^{\boldsymbol{p}+\boldsymbol{1}}\right)$ and several error estimator components: $\eta_{h}^{\boldsymbol{p}+\boldsymbol{1}}$, $\eta_{h,\approx}^{\boldsymbol{p}+\boldsymbol{1}}$, $\eta_{\tau}^{\boldsymbol{p}+\boldsymbol{1}}$, and $\eta_{\tau,\approx}^{\boldsymbol{p}+\boldsymbol{1}}$. Here, $\eta_{h}^{\boldsymbol{p}+\boldsymbol{1}}$ and $\eta_{\tau}^{\boldsymbol{p}+\boldsymbol{1}}$ correspond to the discretization error in space and time, respectively. Therefore, $\eta_{h}^{\boldsymbol{p}+\boldsymbol{1}}$ and $\eta_{\tau}^{\boldsymbol{p}+\boldsymbol{1}}$ can be used to drive the adaptivity. The components $\eta_{h,\approx}^{\boldsymbol{p}+\boldsymbol{1}}$ and $\eta_{\tau,\approx}^{\boldsymbol{p}+\boldsymbol{1}}$ capture the inaccuracies related to not solving the problem at higher-orders in space and time, as well as the inaccuracies when solving for $u_{\tau h}^{\boldsymbol{p}}$ and higher-order components. These parts will be neglected for the adaptivity.
\end{remark}

\paragraph{Error estimator in practice}
Let $\mathrm{R}^{\boldsymbol p}_{h} \coloneq \mathrm{R}^{\boldsymbol p,1}_{h}$ and $\mathrm{R}^{\boldsymbol p}_{h,i} \coloneq \mathrm{R}^{\boldsymbol p,1}_{h,i}$.
For the error estimator in practice this, we replace $u_{\tau h}^{\boldsymbol p}$ by $\mathrm{R}^{\boldsymbol p}_{h}u_{\tau h}^{\boldsymbol p+\boldsymbol 1}$ and $z_{\tau h}^{\boldsymbol p}$ by $\mathrm{R}^{\boldsymbol p}_{h}z_{\tau h}^{\boldsymbol p+\boldsymbol 1}$.
The resulting discretization error estimator $\eta_h^{\boldsymbol p + \boldsymbol 1}$ is given by
\begin{equation}\label{eq:iso-error-est-h-iso}
    \eta_{h}^{\boldsymbol p+ \boldsymbol 1}\coloneq{}\frac{1}{2}\rho_{\tau}\left(\mathrm{R}^{\boldsymbol p}_{h}u_{\tau h}^{\boldsymbol p+\boldsymbol 1}\right)\left(z_{\tau h}^{\boldsymbol p+\boldsymbol 1}- \mathrm{R}^{\boldsymbol p}_{h}z_{\tau h}^{\boldsymbol p+\boldsymbol 1}\right)
     + \frac{1}{2}
    \rho_{\tau}^{\ast}\left(\mathrm{R}^{\boldsymbol p }_{h}u_{\tau h}^{\boldsymbol p ~ \boldsymbol 1},\mathrm{R}_{h}^{\boldsymbol p}z_{\tau h}^{\boldsymbol p+\boldsymbol 1}\right)
    \left(u_{\tau h}^{\boldsymbol p+\boldsymbol 1}-\mathrm{R}^{\boldsymbol p}_{h}u_{\tau h}^{\boldsymbol p+\boldsymbol 1}\right)
\end{equation}
\begin{theorem}\label{thm:anisotropic-error-split}
  Let $\eta_{h}^{\boldsymbol p+ \boldsymbol 1}$ be defined as in  \eqref{eq:iso-error-est-h-iso}.
  Then, $\eta_{h}^{\boldsymbol p+ \boldsymbol 1}$ can be decomposed as follows
  \begin{equation} \label{eq: implemented error estimate}
    \begin{aligned}
      \eta_{h}^{\boldsymbol p + \boldsymbol 1}\coloneq&
      \left(\sum_{i=1}^{d}\eta_{h,i}^{\boldsymbol p + \boldsymbol 1}\right)
      -
      \eta_{h,\mathbb{E}}^{\boldsymbol p + \boldsymbol 1}
      \,.
    \end{aligned}
  \end{equation}
  where for $i \in \{1\,,\dots, d\}$, we define
  \begin{equation}
    \begin{alignedat}{2}\label{eq:aniso-error-est-h-aniso-high}
      \eta_{h,i}^{\boldsymbol p + \boldsymbol 1}&\coloneq{}&&\frac{1}{2}\rho_{\tau}\left(\mathrm{R}^{\boldsymbol p}_{h}u_{\tau h}^{\boldsymbol p+\boldsymbol 1}\right)\left(z_{\tau h}^{\boldsymbol p+\boldsymbol 1}- \mathrm{R}^{\boldsymbol p + \boldsymbol {1}}_{h,i}z_{\tau h}^{\boldsymbol p+\boldsymbol 1}\right)
      + \frac{1}{2}
      \rho_{\tau}^{\ast}\left(\mathrm{R}^{\boldsymbol p}_{h}u_{\tau h}^{\boldsymbol p+\boldsymbol 1},\mathrm{R}_{h}^{\boldsymbol p}z_{\tau h}^{\boldsymbol p+\boldsymbol 1}\right)
      \left(u_{\tau h}^{\boldsymbol p+\boldsymbol 1}-\mathrm{R}^{\boldsymbol p + \boldsymbol {1}}_{h,i}u_{\tau h}^{\boldsymbol p+\boldsymbol 1}\right)
      \,,
    \end{alignedat}
  \end{equation}
  and
  \begin{equation}
    \begin{alignedat}{2}
      \eta_{h,\mathbb{E}}^{\boldsymbol p+\boldsymbol 1}&\coloneq{}&&\frac{1}{2}\rho_{\tau}\left(\mathrm{R}^{\boldsymbol p}_{h}u_{\tau h}^{\boldsymbol p+\boldsymbol 1}\right)\left(\mathbb{E}^{\boldsymbol p + \boldsymbol 1}z_{\tau h}^{\boldsymbol p+\boldsymbol 1}\right)
      + \frac{1}{2}
      \rho_{\tau}^{\ast}\left(\mathrm{R}^{\boldsymbol p}_{h}u_{\tau h}^{\boldsymbol p+\boldsymbol 1},\mathrm{R}_{h}^{p}z_{\tau h}^{\boldsymbol p+\boldsymbol 1}\right)
      \left(\mathbb{E}^{\boldsymbol p + \boldsymbol 1}u_{\tau h}^{\boldsymbol p+\boldsymbol 1}\right)
      \,.
    \end{alignedat}
  \end{equation}
\end{theorem}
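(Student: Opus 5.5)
The plan is to use only linearity of the residual functionals in their last (test) argument together with the definition of the isotropic error operator $\mathbb{E}^{\boldsymbol p+\boldsymbol 1}$; no approximation argument is needed. Both $\rho_\tau(\mathrm{R}^{\boldsymbol p}_{h}u_{\tau h}^{\boldsymbol p+\boldsymbol 1})(\cdot)$ and $\rho_\tau^{\ast}(\mathrm{R}^{\boldsymbol p}_{h}u_{\tau h}^{\boldsymbol p+\boldsymbol 1},\mathrm{R}^{\boldsymbol p}_{h}z_{\tau h}^{\boldsymbol p+\boldsymbol 1})(\cdot)$ are linear in the argument $\varphi$, by \eqref{eq:def:tau-residuals}. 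Therefore it suffices to prove one operator identity on $\mathcal V_h^{\boldsymbol p+\boldsymbol 1}$, applied slabwise in time and componentwise in the tensor-product representation of Remark~\ref{rem:tensor-product-spaces}. This identity must hold for all $v$ in the enriched space:
\begin{equation*}
  v-\mathrm{R}^{\boldsymbol p}_{h}v=\sum_{i=1}^d\bigl(v-\mathrm{R}^{\boldsymbol p+\boldsymbol 1}_{h,i}v\bigr)-\mathbb{E}^{\boldsymbol p+\boldsymbol 1}v .
\end{equation*}

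\textbf{Step 1: verify the identity.} Insert the definition $\mathbb{E}^{\boldsymbol p+\boldsymbol 1}=(d-1)I+\mathrm{R}^{\boldsymbol p}_{h}-\sum_{i}\mathrm{R}^{\boldsymbol p+\boldsymbol 1}_{h,i}$, with the injections into $\mathcal V_h^{\boldsymbol p+\boldsymbol 1}$ understood as in the definition of the isotropic error part. The right-hand side then becomes
\[
dv-\sum_i \mathrm{R}^{\boldsymbol p+\boldsymbol 1}_{h,i}v-(d-1)v-\mathrm{R}^{\boldsymbol p}_{h}v+\sum_i \mathrm{R}^{\boldsymbol p+\boldsymbol 1}_{h,i}v = v-\mathrm{R}^{\boldsymbol p}_{h}v ,
\]
which is the claim. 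Here $\mathrm{R}^{\boldsymbol p+\boldsymbol 1}_{h,i}$ denotes the directional restriction $\mathrm{R}^{\boldsymbol p,1}_{h,i}$ of Definition~\ref{def:directional-restriction}, consistent with the notational convention stated just before \eqref{eq:iso-error-est-h-iso}.

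\textbf{Step 2: lift the identity to space-time functions.} The restrictions act only in space, pointwise in time. So for $w=\sum_{n,i}\xi_{n,i}\otimes w_{n}^i$ we apply the spatial identity to each coefficient function. This gives the same identity for $z_{\tau h}^{\boldsymbol p+\boldsymbol 1}$ and for $u_{\tau h}^{\boldsymbol p+\boldsymbol 1}$ in $\mathcal X_{\tau h}^{\boldsymbol k,\boldsymbol p+\boldsymbol 1}$.

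\textbf{Step 3: substitute into the estimator.} Use the identity for $z_{\tau h}^{\boldsymbol p+\boldsymbol 1}-\mathrm{R}^{\boldsymbol p}_{h}z_{\tau h}^{\boldsymbol p+\boldsymbol 1}$ in the first term of \eqref{eq:iso-error-est-h-iso}, and for $u_{\tau h}^{\boldsymbol p+\boldsymbol 1}-\mathrm{R}^{\boldsymbol p}_{h}u_{\tau h}^{\boldsymbol p+\boldsymbol 1}$ in the second. Expanding by linearity and regrouping the $i$-th terms of both residuals gives $\eta_{h,i}^{\boldsymbol p+\boldsymbol 1}$. The two terms containing $\mathbb{E}^{\boldsymbol p+\boldsymbol 1}$ combine into $\eta_{h,\mathbb{E}}^{\boldsymbol p+\boldsymbol 1}$ with a minus sign, which is exactly \eqref{eq: implemented error estimate}.

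The only potential obstacle is bookkeeping rather than analysis. One must make sure that the residuals are evaluated at the same (restricted) primal and adjoint states in every term, which holds by construction. One must also ensure the test functions lie in a space on which $\rho_\tau$ is defined. This holds because all restricted functions are canonically embedded in $\mathcal V_h^{\boldsymbol p+\boldsymbol 1}\subset$ the broken space on which the DG residual is assembled.
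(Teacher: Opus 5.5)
Your proposal is correct and follows the paper's proof essentially verbatim. The paper likewise rewrites the definition of $\mathbb{E}^{\boldsymbol p+\boldsymbol 1}$ as the operator identity $I-\mathrm{R}^{\boldsymbol p}_h=\sum_{i=1}^d\bigl(I-\mathrm{R}^{\boldsymbol p+\boldsymbol 1}_{h,i}\bigr)-\mathbb{E}^{\boldsymbol p+\boldsymbol 1}$ and then concludes by linearity of $\rho_\tau$ and $\rho_\tau^{\ast}$ in the test argument; your slabwise lifting to space-time functions and your reading of $\mathrm{R}^{\boldsymbol p+\boldsymbol 1}_{h,i}$ as the same directional projection $\mathrm{R}^{\boldsymbol p,1}_{h,i}$ that enters $\mathbb{E}^{\boldsymbol p+\boldsymbol 1}$ are details the paper leaves implicit.
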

\begin{proof}
  For $\eta_{h}^{\boldsymbol p}$ we follow the idea of  to~\cite{bause2025anisotropicspacetimegoalorientederror}.
  From the definition of $\mathbb{E}^{{p_K}+{1}}$ we know
  \begin{equation}
      \mathbb{E}^{{p_K}+{1}}:=\left(d-1\right)I+\mathrm{R}^{\boldsymbol p}_h-\sum_{i=1}^d \mathrm{R}^{\boldsymbol p + \boldsymbol {1}}_{h,i}.
  \end{equation}
  Subtracting $\mathbb{E}^{{p_K}+{1}}$ and adding $I-\mathrm{R}^{\boldsymbol p}_h$ yields
  \begin{equation*}
      I-\mathrm{R}^{\boldsymbol p}_h= -\mathbb{E}^{{p_K}+{1}} + dI-\sum_{i=1}^d \mathrm{R}^{\boldsymbol p + \boldsymbol {1}}_{h,i}=-\mathbb{E}^{{p_K}+{1}}+\sum_{i=1}^d I-\mathrm{R}^{\boldsymbol p + \boldsymbol {1}}_{h,i}.
  \end{equation*}
  The rest follows as in \cite{bause2025anisotropicspacetimegoalorientederror} by using the shown identity and the linearity of the parts $u_{\tau h}^{\boldsymbol p+\boldsymbol 1}- \mathrm{R}^{\boldsymbol p}_{h}u_{\tau h}^{\boldsymbol p+\boldsymbol 1}$ and $z_{\tau h}^{\boldsymbol p+\boldsymbol 1}- \mathrm{R}^{\boldsymbol p}_{h}z_{\tau h}^{\boldsymbol p+\boldsymbol 1}$ in \eqref{eq:iso-error-est-h-iso}.
\end{proof}
\begin{theorem}
    Let us assume there exists $\beta \in (0,1)$ and $\beta_{h,\tau} \in (0,\beta)$ such that 
    $$\left|J\left(u\right)
      -J\left(\mathrm{E}_{\tau}^{\left(\boldsymbol k+\boldsymbol 1\right)}u_{\tau h}^{\boldsymbol p+\boldsymbol 1}\right)\right|+\left|\eta_{h,\mathbb{E}}^{\boldsymbol{p}+\boldsymbol{1}} +\eta_{h,\approx}^{\boldsymbol{p}+\boldsymbol{1}}
      +\eta_{\tau,\approx}^{\boldsymbol{p}+\boldsymbol{1}}\right| \leq \beta_{h,\tau}\left|J\left(u\right)-J\left(\mathrm{R}^{\boldsymbol p}_{h}u_{\tau h}^{\boldsymbol p+\boldsymbol 1}\right)\right|\,.$$
      Then, it holds that 
      \begin{equation}
        \frac{1}{1+\beta_{h,\tau}}\left|\eta_{\tau}^{\boldsymbol p+\boldsymbol 1}+\sum_{i=1}^{d}\eta_{h,i}^{\boldsymbol p + \boldsymbol 1}\right| 
       \leq
       \left|J\left(u\right)-J\left(\mathrm{R}^{\boldsymbol p}_{h}u_{\tau h}^{\boldsymbol p+\boldsymbol 1}\right)\right|
       \leq
       \frac{1}{1-\beta_{h,\tau}}\left|\eta_{\tau}^{\boldsymbol p+\boldsymbol 1}+\sum_{i=1}^{d}\eta_{h,i}^{\boldsymbol p + \boldsymbol 1}\right| \,.
      \end{equation}
       \begin{proof}
           The proof is analogous to the proof of \cite[Theorem 3.9]{endtmayer2020two} and \cite[Theorem 3.12]{endtmayer2021reliability}.
       \end{proof}
\end{theorem}

\begin{remark}
    From Theorem~\ref{thm: isotropic part}, we know that $\eta_{h,\mathbb{E}}^{\boldsymbol p+\boldsymbol 1}$ is of higher order and is therefore neglected in further correspondence.
\end{remark}
\subsection{\label{sec:localization-aniso}Localization of the Anisotropic Error Estimators}

For localization, we use the partition of unity (PU) technique presented in~\cite{RiWi15_dwr}. We split our error in a temporal  and spatial directional contributions as
\begin{subequations}\label{eq: implemented error estimator}
  \begin{alignat}{2}\label{eq: implemented error estimator-tau}
    \eta_{\tau}^{\boldsymbol p}&\coloneq{}&&\frac{1}{2}\rho_{\tau}\left(\mathrm{R}_{h}^{\boldsymbol p}u_{\tau h}^{\boldsymbol p+\boldsymbol 1}\right)\left(\mathrm{E}_{\tau}^{\left(\boldsymbol k+\boldsymbol 1\right)}\mathrm{R}_{h}^{\boldsymbol p}z_{\tau h}^{\boldsymbol p+\boldsymbol 1}-\mathrm{R}_{h}^{\boldsymbol p}z_{\tau h}^{\boldsymbol p+\boldsymbol 1}\right)
    \\\nonumber&&&
    + \frac{1}{2}\rho_{\tau}^{\ast}\left(\mathrm{R}_{h}^{\boldsymbol p}u_{\tau h}^{\boldsymbol p+\boldsymbol 1},\mathrm{R}_{h}^{\boldsymbol p}z_{\tau
      h}^{\boldsymbol p+\boldsymbol 1}\right)\left(\mathrm{E}_{\tau}^{\left(\boldsymbol k+\boldsymbol 1\right)}\mathrm{R}_{h}^{\boldsymbol p}u_{\tau h}^{\boldsymbol p+\boldsymbol 1}-\mathrm{R}_{h}^{\boldsymbol p}u_{\tau h}^{\boldsymbol p+\boldsymbol 1}\right)\,,\\
      \label{eq:aniso-error-est-h-aniso-loc}
    \eta_{h,i}^{\boldsymbol p}&\coloneq{}&&\frac{1}{2}\rho_{\tau}\left(\mathrm{R}^{\boldsymbol p - \boldsymbol 1,2}_{h}u_{\tau h}^{\boldsymbol p+\boldsymbol 1}\right)\left(\mathrm{R}^{\boldsymbol p}_{h}z_{\tau h}^{\boldsymbol p+\boldsymbol 1}- \mathrm{R}^{\boldsymbol p}_{h,i}z_{\tau h}^{\boldsymbol p+\boldsymbol 1}\right)
    \\
    &&&
    + \frac{1}{2}
    \rho_{\tau}^{\ast}\left(\mathrm{R}^{\boldsymbol p - \boldsymbol 1,2}_{h}u_{\tau h}^{\boldsymbol p+\boldsymbol 1},\mathrm{R}_{h}^{\boldsymbol p - \boldsymbol 1,2}z_{\tau h}^{\boldsymbol p+\boldsymbol 1}\right)
    \left(\mathrm{R}^{\boldsymbol p}_{h}u_{\tau h}^{\boldsymbol p+\boldsymbol 1}-\mathrm{R}^{\boldsymbol p}_{h,i}u_{\tau h}^{\boldsymbol p+\boldsymbol 1}\right)
    \,, \\
      \label{eq: implemented error estimator-tau+1-loc}
    \eta_{\tau}^{\boldsymbol p+\boldsymbol 1}&\coloneq{}&&\frac{1}{2}\rho_{\tau}\left(u_{\tau h}^{\boldsymbol p+\boldsymbol 1}\right)\left(\mathrm{E}_{\tau}^{\left(\boldsymbol k+\boldsymbol 1\right)}z_{\tau h}^{\boldsymbol p+\boldsymbol 1}-z_{\tau h}^{\boldsymbol p+\boldsymbol 1}\right)
    + \frac{1}{2}\rho_{\tau}^{\ast}\left(u_{\tau h}^{\boldsymbol p+\boldsymbol 1},z_{\tau
      h}^{\boldsymbol p+\boldsymbol 1}\right)\left(\mathrm{E}_{\tau}^{\left(\boldsymbol k+\boldsymbol 1\right)}u_{\tau h}^{\boldsymbol p+\boldsymbol 1}-u_{\tau h}^{\boldsymbol p+\boldsymbol 1}\right)\,,\\
      \label{eq: implemented error estimator-h+1-loc}
    \eta_{h,i}^{\boldsymbol p +\boldsymbol 1}&\coloneq{}&&\frac{1}{2}\rho_{\tau}\left(\mathrm{R}^{\boldsymbol p}_{h}u_{\tau h}^{\boldsymbol p+\boldsymbol 1}\right)\left(z_{\tau h}^{\boldsymbol p+\boldsymbol 1}- \mathrm{R}^{\boldsymbol p + \boldsymbol {1}}_{h,i}z_{\tau h}^{\boldsymbol p+\boldsymbol 1}\right)
      + \frac{1}{2}
      \rho_{\tau}^{\ast}\left(\mathrm{R}^{\boldsymbol p}_{h}u_{\tau h}^{\boldsymbol p+\boldsymbol 1},\mathrm{R}_{h}^{\boldsymbol p}z_{\tau h}^{\boldsymbol p+\boldsymbol 1}\right)
      \left(u_{\tau h}^{\boldsymbol p+\boldsymbol 1}-\mathrm{R}^{\boldsymbol p + \boldsymbol {1}}_{h,i}u_{\tau h}^{\boldsymbol p+\boldsymbol 1}\right)
    \,.
  \end{alignat}
\end{subequations}
To localize the contributions in~\eqref{eq: implemented error estimator}, we use the PU-technique. In particular, we use the functions $\Psi_{\tau,n}\coloneq\mathcal{X}_{I_n\times \Omega}$ as the temporal partition of unity and   $\Psi_{h,K}\coloneq\mathcal{X}_{K\times [0,T]}$ as the spatial one.
Here, $\mathcal{X}_{S}$ denotes the characteristic function of the set $S$.
Of course $\sum_{n=1}^{N}\Psi_{\tau,n} \equiv 1$ and $\sum_{K\in \mathcal T_{h}}\Psi_{h,K} \equiv 1$.
This allows us to  recast the
estimators in~\eqref{eq: implemented error estimator} with the localized indicators
\begin{alignat}{2}
  \label{eq:localized-errors-aniso-t}
  \eta_{\tau}^{\boldsymbol p} & = \sum_{n=1}^{N}\eta_{\tau,n}^{\boldsymbol p}\,, \qquad\qquad &\eta_{\tau}^{\boldsymbol p+\boldsymbol 1} & = \sum_{n=1}^{N}\eta_{\tau,n}^{\boldsymbol p+\boldsymbol 1}\,,
  \intertext{and}
  \label{eq:localized-errors-aniso-s}
  \eta_{h,i}^{\boldsymbol p} & = \sum_{K\in\mathcal{T}_h} \eta_{h,i}^{p_K}
  \,, \qquad\qquad &\eta_{h,i}^{\boldsymbol p+\boldsymbol 1} & = \sum_{K\in\mathcal{T}_h} \eta_{h,i}^{p_K+1}.
\end{alignat}
where
\begin{subequations}\label{eq: implemented error estimatorloc}
  \begin{alignat}{2}\label{eq: implemented error estimator-tau_loc}
      \eta_{\tau,n}^{\boldsymbol p}&\coloneq{}&&\frac{1}{2}\rho_{\tau}\left(\mathrm{R}_{h}^{\boldsymbol p}u_{\tau h}^{\boldsymbol p+\boldsymbol 1}\right)\left(\left(\mathrm{E}_{\tau}^{\left(\boldsymbol k+\boldsymbol 1\right)}\mathrm{R}_{h}^{\boldsymbol p }z_{\tau h}^{\boldsymbol p+\boldsymbol 1}-\mathrm{R}_{h}^{\boldsymbol p}z_{\tau h}^{\boldsymbol p+\boldsymbol 1}\right)\Psi_{\tau,n}\right)
      \\ \nonumber&&&+ \frac{1}{2}\rho_{\tau}^{\ast}\left(\mathrm{R}_{h}^{\boldsymbol p }u_{\tau h}^{\boldsymbol p+\boldsymbol 1},\mathrm{R}_{h}^{\boldsymbol p}z_{\tau
                      h}^{\boldsymbol p+\boldsymbol 1}\right)\left(\left(\mathrm{E}_{\tau}^{\left(\boldsymbol k+\boldsymbol 1\right)}\mathrm{R}_{h}^{\boldsymbol p}u_{\tau h}^{\boldsymbol p+\boldsymbol 1}-\mathrm{R}_{h}^{\boldsymbol p}u_{\tau h}^{\boldsymbol p+\boldsymbol 1}\right)\Psi_{\tau,n}\right)\,,\\
    \label{eq: implemented error estimator-h aniso loc}
    \eta_{h,i}^{ p_K}&\coloneq{}&&\frac{1}{2}\rho_{\tau}\left(\mathrm{R}_{h}^{\boldsymbol p-\boldsymbol 1,2}u_{\tau h}^{\boldsymbol p+\boldsymbol 1}\right)
    \left(\left(\mathrm{R}_{h}^{\boldsymbol p}z_{\tau h}^{\boldsymbol p+\boldsymbol 1}- \mathrm{R}^{\boldsymbol p}_{h,i}z_{\tau h}^{\boldsymbol p+\boldsymbol 1}\right)\Psi_{h,K}\right)
    \\ \nonumber&&&
      + \frac{1}{2}
      \rho_{\tau}^{\ast}\left(\mathrm{R}_{h}^{\boldsymbol p-\boldsymbol 1,2}u_{\tau h}^{\boldsymbol p+\boldsymbol 1},\mathrm{R}_{h}^{\boldsymbol p -\boldsymbol 1,2}z_{\tau h}^{\boldsymbol p+\boldsymbol 1}\right)
      \left(\left(\mathrm{R}_{h}^{\boldsymbol p}u_{\tau h}^{\boldsymbol p+\boldsymbol 1}-\mathrm{R}^{\boldsymbol p}_{h,i}u_{\tau h}^{\boldsymbol p+\boldsymbol 1}\right)\Psi_{h,K}\right)
      \,,\\
    \label{eq: implemented error estimator-tau+1}
    \eta_{\tau,n}^{\boldsymbol p+\boldsymbol 1}&\coloneq{}&&\frac{1}{2}\rho_{\tau}\left(u_{\tau h}^{\boldsymbol p+\boldsymbol 1}\right)\left(\left(\mathrm{E}_{\tau}^{\left(\boldsymbol k+\boldsymbol 1\right)}z_{\tau h}^{\boldsymbol p+\boldsymbol 1}-z_{\tau h}^{\boldsymbol p+\boldsymbol 1}\right)\Psi_{\tau,n}\right)
    \\ \nonumber&&&+ \frac{1}{2}\rho_{\tau}^{\ast}\left(u_{\tau h}^{\boldsymbol p+\boldsymbol 1},z_{\tau
      h}^{\boldsymbol p+\boldsymbol 1}\right)
      \left(\left(\mathrm{E}_{\tau}^{\left(\boldsymbol k+\boldsymbol 1\right)}u_{\tau h}^{\boldsymbol p+\boldsymbol 1}-u_{\tau h}^{\boldsymbol p+\boldsymbol 1}\right)\Psi_{\tau,n}\right)\,,\\
      \label{eq: implemented error estimator-h aniso loc-high}
    \eta_{h,i}^{ p_K+1}&\coloneq{}&&
    \, \frac{1}{2}\rho_{\tau}\left(\mathrm{R}^{\boldsymbol p}_{h}u_{\tau h}^{\boldsymbol p+\boldsymbol 1}\right)\left(\left(z_{\tau h}^{\boldsymbol p+\boldsymbol 1}- \mathrm{R}^{\boldsymbol p + \boldsymbol {1}}_{h,i}z_{\tau h}^{\boldsymbol p+\boldsymbol 1}\right)\Psi_{h,K}\right)
      \\ \nonumber&&&+ \frac{1}{2}
      \rho_{\tau}^{\ast}\left(\mathrm{R}^{\boldsymbol p}_{h}u_{\tau h}^{\boldsymbol p+\boldsymbol 1},\mathrm{R}_{h}^{\boldsymbol p}z_{\tau h}^{\boldsymbol p+\boldsymbol 1}\right)
      \left(\left(u_{\tau h}^{\boldsymbol p+\boldsymbol 1}-\mathrm{R}^{\boldsymbol p + \boldsymbol {1}}_{h,i}u_{\tau h}^{\boldsymbol p+\boldsymbol 1}\right)\Psi_{h,K}\right)
      \,.
  \end{alignat}
\end{subequations}

\section{Algorithm for Goal-Oriented Anisotropic $hp$-Adaptivity}\label{sec:algorithm}
With the goal-oriented error representation and anisotropic indicators in place,
we now present the algorithm for goal-oriented anisotropic $hp$-adaptivity. The
elementwise contributions of the anisotropic error indicators in space are
given in~\eqref{eq:localized-errors-aniso-s}. Following Thm.~\ref{thm:anisotropic-error-split}, we
discard the higher-order spatial remainder of $\eta_h^{\boldsymbol p}$ and
$\eta_h^{\boldsymbol p+\boldsymbol 1}$. The temporal slabwise contributions are
given in~\eqref{eq:localized-errors-aniso-t}. The spatial and temporal
indicators drive the anisotropic mesh-adaptation strategy in
Alg.~\ref{alg:AnisotropicAMR}. By evaluating the directional contributions separately,
the marking procedure naturally yields anisotropic refinement when the error is dominated
by specific directions, and isotropic refinement when the contributions are balanced.
\begin{algorithm}[htb]
  \caption{\label{alg:AnisotropicAMR}Anisotropic mesh adaptation: estimate, mark \& refine/coarsen}
  \begin{algorithmic}[1]
    \REQUIRE{$\theta_h\in [0,d],\,\theta_\tau\in [0,1]$,
             $\theta_h^{co}\in [0,d-\theta_h]$,
             $\theta_\tau^{co}\in [0,1-\theta_\tau]$, threshold $\gamma>0$.}
    \STATE \textbf{SOLVE:}\label{line: solve}\\
    \STATE Solve \textbf{primal} problem~\eqref{eq:discrete_variational_abstract}: Find \(u^{\boldsymbol{p}}_{\tau h}\in\mathcal{X}_{\tau h}^{\boldsymbol k,\boldsymbol{p}}:  A_{\tau h}(u^{\boldsymbol{p}}_{\tau h}, w_{\tau h}) = F_{\tau h}(w_{\tau h}) \quad\forall w_{\tau h}\in\mathcal{X}_{\tau h}^{\boldsymbol k,\boldsymbol{p}}\).
    \STATE Solve \textbf{primal ho} problem~\eqref{eq:discrete_variational_abstract}: Find \(u^{\boldsymbol{p+1}}_{\tau h}\in\mathcal{X}_{\tau h}^{\boldsymbol k,\boldsymbol{p+1}}:  A_{\tau h}(u^{\boldsymbol{p+1}}_{\tau h}, w_{\tau h}) = F_{\tau h}(w_{\tau h}) \;\forall w_{\tau h}\in\mathcal{X}_{\tau h}^{\boldsymbol k,\boldsymbol{p+1}}\).
    \STATE Solve \textbf{adjoint} problem~\eqref{eq:discrete_adjoint_variational_abstract}: Find $z_{\tau h}\in \mathcal{X}_{\tau h}^{\boldsymbol k,\boldsymbol{p+1}}: A_{\tau h}(v_{\tau h},\,z_{\tau h}) =
    J^{\prime}(u_{\tau h})(v_{\tau h}) \quad \forall v_{\tau h}
    \in \mathcal{X}_{\tau h}^{\boldsymbol k,\boldsymbol{p+1}}$.
    \STATE \textbf{ESTIMATE:}
    \STATE Compute spatial error indicators
      $\eta^{p_K+1}_{h,i}$ and $\eta^{p_K}_{h,i}$ for all $K\in\mathcal T_h$, $i=1,\dots,d$
      (cf.~\eqref{eq:localized-errors-aniso-s}).
    \STATE Compute temporal error indicators
      $\eta^{\boldsymbol p+\boldsymbol 1}_{\tau,n}$ and $\eta^{\boldsymbol p}_{\tau,n}$ for all $I_n\in\mathcal T_\tau$
      (cf.~\eqref{eq:localized-errors-aniso-t}).

    \STATE \textbf{MARK (space):}
    \STATE Let $\mathcal M_h^{ref} \subset \mathcal T_h$ be the set of
    $\theta_h\,|\mathcal T_h|$ cells $K$ with largest $\eta_{h,i}^{p_K+1}$,
    over all directions $i$.
    \STATE Let $\mathcal M_h^{co}  \subset \mathcal T_h$ be the set of
    $\theta_h^{co}\,|\mathcal T_h|$ cells $K$ with smallest $\eta_{h,i}^{p_K+1}$,
    over all directions $i$.

    \STATE $\gamma^{p_K}_{h,i}\coloneq\eta^{p_K+1}_{h,i} / \eta^{p_K}_{h,i}$\label{alg:def:gammah}
    \STATE Mark $K\in\mathcal M_h^{ref}$ in direction $i$ for anisotropic
    $\begin{cases}
    h\text{-refinement} & \text{if }\gamma^{p_K}_{h,i}>\gamma,\\
    p\text{-refinement} & \text{otherwise.}
    \end{cases}$ \label{alg:mark-aniso-hp}
    \STATE Mark $K\in\mathcal M_h^{co\hphantom{f}}$ in direction $i$ for anisotropic
    $\begin{cases}
    h\text{-coarsening} & \text{if }\gamma^{p_K}_{h,i}<\gamma,\\
    p\text{-coarsening} & \text{otherwise.}
    \end{cases}$\label{alg:coar-aniso-hp}

    \STATE \textbf{MARK (time):}
    \STATE Let $\mathcal M_\tau^{ref} \subset \mathcal T_\tau$ be the set of
           $\theta_\tau\,|\mathcal T_\tau|$ subintervals $I_n$ with largest $\eta^{\boldsymbol p}_{\tau,n}$.
    \STATE Let $\mathcal M_\tau^{co}  \subset \mathcal T_\tau$ be the set of
           $\theta_\tau^{co}\,|\mathcal T_\tau|$ subintervals $I_n$ with smallest $\eta^{\boldsymbol p}_{\tau,n}$.

    \STATE $\gamma^n_\tau\coloneq \eta^{\boldsymbol p+\boldsymbol 1}_{\tau,n} / \eta^{\boldsymbol p }_{\tau,n}$\label{alg:def:gammatau}
    \STATE Mark $I_n\in \mathcal M_\tau^{ref}$ for $\tau$-refinement  if $\gamma^n_\tau > \gamma$ and for $k$-refinement if $\gamma^n_\tau \leq \gamma$.\label{alg:mark-aniso-tau}
    \STATE Mark $I_n\in \mathcal M_\tau^{co\phantom{f}}$ for $\tau$-coarsening if $\gamma^n_\tau < \gamma$ and for $k$-coarsening if $\gamma^n_\tau \geq \gamma$.
    \STATE \textbf{REFINE/COARSEN:} Adapt $(\mathcal T_h,\mathcal T_\tau)$ according to the spatial and temporal marks.\label{alg:coar-aniso-tau}
  \end{algorithmic}
\end{algorithm}
\begin{remark}[Anisotropic-$hp$ Adaptation]
~\\[-2\topsep]
\begin{itemize}\itemsep1pt \parskip0pt \parsep0pt
\item With the spatial mesh fixed across all time slabs, the directional spatial
  estimators \(\eta_{h,i}^{p_K+1}\) and \(\eta_{h,i}^{p_K}\) for each
  \(K\in\mathcal T_h\) and \(i\in\{1,\dots,d\}\) are defined on
  \(K\times[0,T]\), while the time–slab estimators
  \(\eta_{\tau}^{\boldsymbol p}\) and
  \(\eta_{\tau}^{\boldsymbol p+\boldsymbol 1}\) on \(I_n\) are defined on
  \(\Omega\times I_n\). We use the PU localization from
  Sec.~\ref{sec:localization-aniso}, which makes the definition of the localized
  error estimators straightforward using indicator functions. In the
  implementation they are calculated as sums of space–time cell contributions.
  The spatial estimators \(\eta_{h,i}^{p_K+1}\) and \(\eta_{h,i}^{p_K}\) for a
  cell \(K\in \mathcal{T}_h\) and spatial direction \(i\) are obtained by
  summing over all \(I_n\in\mathcal T_{\tau}\), whereas the temporal estimators
  \(\eta_{\tau,n}^{\boldsymbol p+\boldsymbol 1}\) and
  \(\eta_{\tau,n}^{\boldsymbol p}\) on \(I_n\) result from summing over all
  \(K\in\mathcal T_h\).
\item Marking for $hp$-refinement and coarsening is performed separately in each spatial
  direction (Alg.~\ref{alg:AnisotropicAMR}, l.~\ref{alg:mark-aniso-hp},~\ref{alg:coar-aniso-hp}). A cell \(K\) is marked for refinement in direction \(i\) if \(\eta^K_{h,i}\) belongs to the largest \(\theta_h|\mathcal T_h|\) values among all \(d|\mathcal T_h|\) directional indicators. We use the local saturation
  indicators \(\gamma^{p_K}_{h,i}\) and a prescribed
  threshold $\gamma\in (0,\,1)$ to decide between $h$- and $p$-refinement  (Alg.~\ref{alg:AnisotropicAMR}, l.~\ref{alg:def:gammah}).
  We apply local $p$-enrichment in direction $i$ if $\gamma^{p_K}_{h,i}<\gamma$, indicating significant reduction under enrichment, and local $h$-refinement otherwise. The same rule is used in time, see (Alg.~\ref{alg:AnisotropicAMR}, l.~\ref{alg:def:gammatau}
  to~\ref{alg:coar-aniso-tau})
\item Adaptive $h$-refinement on quadrilateral or hexahedral meshes leads to
  interdependent hanging nodes. To resolve these mutual dependencies, we refine the
  coarser element at the end of the hanging-node chain in the appropriate
  direction (i.\,e.\ anisotropically).
\item Adaptive \( p \)-refinement within discontinuous Galerkin (DG) methods
  is straightforward, as no continuity is enforced across element interfaces.
  However, care must be taken to correctly evaluate face integrals. In
  contrast, conforming methods require constraint equations when using
  distinct finite elements on two sides of a face to ensure continuity across
  faces with varying polynomial degrees~\cite{BK07}.
\end{itemize}
\end{remark}
To compute the error indicators required in Alg.~\ref{alg:AnisotropicAMR},
we first solve the primal problem forward in time, followed by the adjoint
problem backward in time. Using both primal and adjoint solutions, we calculate
the local error estimators~\eqref{eq:localized-errors-aniso-s} and~\eqref{eq:localized-errors-aniso-t}, perform marking
and refinement according to Alg.~\ref{alg:AnisotropicAMR}, and repeat the
space-time solution procedure iteratively until the desired accuracy is reached.
For algorithmic details and implementation aspects of
tensor-product space-time finite elements in the \texttt{deal.II} library, we refer
to~\cite{KoeBruBau2019,RoThiKoeWi23,ThiWi24,endtmayer_goal-oriented_2024}.
For implementation aspects of $hp$ methods in \texttt{deal.II} we refer
to~\cite{fehling_algorithms_2023,BK07}.

\section{Solver for the Algebraic Systems}\label{sec:solvers}
\begin{figure}
\begin{tikzpicture}[thick,scale=1.0, every node/.style={scale=0.8}]
  \draw(-0.5,0.8) |- (1.0,0.0);
  \draw(0.5,0) |- (1.0,-0.8);
  \draw(1.5,-0.6) |- (2.0,-1.7);
  \node[align=left,anchor=west, fill=myblue!10!white] at (-1.0,0.8) {Space-time
    system on $I_n$ with $\lvert I_n\rvert=\tau_n$ and order $k_n$};
  \node[align=left,anchor=west, fill=myblue!10!white] at (0,0) {GMRES with $
    {(\symbf M_{\tau}^{k_n})}^{-1}\symbf A_\tau^{k_n} \otimes \symbf M_h + \tau \mathbb{1}_{k_n+1} \otimes \symbf A_h$ };
  \node[align=left,anchor=west, fill=myblue!10!white] at (1.0,-0.8) {Block preconditioner $\symbf P=(\symbf S^{k_n} \otimes \mathbb{1}_{N_{\symbf x}}) (\symbf \Lambda^{k_n} \otimes \symbf M_h + \tau \mathbb{1} \otimes \symbf A_h)({(\symbf S^{k_n})}^{-1} \otimes \mathbb{1}_{N_{\symbf x}})$};
  \node[align=left,anchor=west, fill=mygreen!10!white] at (2.0,-1.8) {GMRES
    for each $l$ block $\lambda_l \symbf M_h + \tau \symbf A_h$ preconditioned by $\operatorname{ILU}(\tau \symbf A_h)$};
\end{tikzpicture}
\caption{\label{fig:linear-solve}Sketch of the linear solution process: The
  space-time system on $I_n$ is preconditioned by a GMRES method to which we
  apply a block preconditioner based on the diagonalization of
  $\operatorname{tril}({(\symbf M_{\tau}^{k_n})}^{-1}\symbf A_\tau)$. The diagonal blocks
  are then solved with few iterations of a preconditioned GMRES method.}
\end{figure}
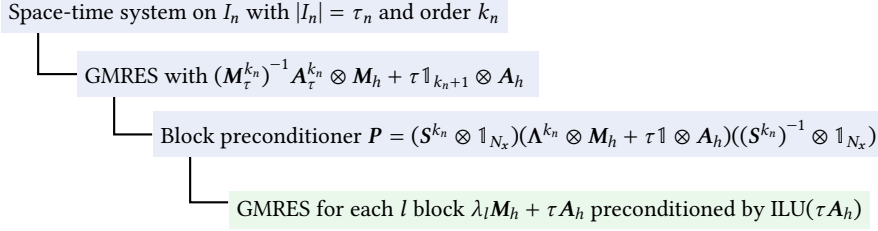

We use the decoupling of the system at interval endpoints and solve the
space–time system in Prob.~\ref{prob:primal-global} via a time–marching
approach. For the local problems on $I_n$ we use GMRES with a preconditioner
that further decouples the temporal degrees of freedom by an LU-then-diagonalize
construction of the temporal operator. This follows the ideas of Axelsson and
Neytcheva~\cite{algoritmy} and can later enable parallelism in
time~\cite{munch_stage-parallel_2023}. The general idea of decoupling large
block systems in space–time discretizations has been explored
earlier~\cite{werder_hp-discontinuous_2001,richter_efficient_2013}. In general, for $k\in \{0\}\cup\N$, let
\begin{equation}\label{eq:LU-unitU}
  \symbf G^{k} \coloneq {(\symbf M_\tau^{k})}^{-1}\symbf A_\tau^{k}\in\mathbb{R}^{(k+1)\times(k+1)},\qquad
  \symbf G^{k}=\symbf L^{k}\,\symbf U^{k}, \quad\operatorname{diag}(\symbf U^{k})=\mathbb{1}_{k+1},
\end{equation}
be an LU factorization with unit diagonal in the upper factor.
For a given slab \(I_n\), we multiply Prob.~\ref{prob:primal-local} by
${(\symbf M_\tau^{k_n})}^{-1}$. Then, the system matrix
in Prob.~\ref{prob:primal-local} can be written as
\begin{equation}
  \label{eq:primal-local-reform}
  \symbf G^{k_n} \otimes \symbf M_h + \tau\, \mathbb{1}_k \otimes
  \symbf A_h\,.
\end{equation}
We compute
an eigen-decomposition of $\symbf L^{k_n}$, the lower factor of $\symbf G^{k_n}$,
\(\symbf L^{k_n} = \symbf S^{k_n}\,\symbf \Lambda^{k_n}\,\symbf {(S^{k_n})}^{-1},\; \symbf \Lambda=\operatorname{diag}(\lambda_1,\dots,\lambda_{k_n+1})\).
This yields the block–diagonal preconditioner to the system~\eqref{eq:primal-local-reform}
\begin{equation}\label{eq:P}
  \symbf P \coloneq (\symbf S^{k_n}\otimes \mathbb{1}_{N_{\symbf x}})
  \bigl(\symbf \Lambda\otimes \symbf M_h + \mathbb{1}_k\otimes \tau \symbf A_h\bigr)
  ({(\symbf S^{k_n})}^{-1}\otimes \mathbb{1}_{N_{\symbf x}}),
\end{equation}
so that applying \(\symbf P^{-1}\) decouples the time dimension into \(k\)
independent spatial solves,
\begin{equation}\label{eq:spatial-blocks}
  (\lambda_\ell \symbf M_h + \tau \symbf A_h)\,\symbf z_\ell = \symbf r_\ell,\qquad \ell=1,\dots,k_n+1,
\end{equation}
each of which can be equipped with a suitable spatial preconditioner (e.g. an
ILU for \(\tau \symbf A_h\)). In our implementation, this preconditioning and
solver technique is applied to Prob.~\eqref{prob:adjoint-variational-local} in
an analogous manner.

\section{Numerical Examples}\label{sec:numerical_examples}
To validate the efficacy of our anisotropic \( hp \)-refinement strategy for time-dependent CDR equations, we present three benchmark problems:
\begin{enumerate}
\item \emph{Interior Layer Problem:} An exact solution is available and features
  a sharp interior layer, serving as a reference for accuracy and convergence
  studies.
\item \emph{Instationary Hemker Problem:} This classical benchmark models
  convection-dominated heat transfer around a hot cylindrical obstacle, leading
  to both boundary and interior layers.
\item \emph{Fichera corner:} In the typical domain with edge and corner
  singularities, an initial concentration is transported towards these
  singularities.
\end{enumerate}
In all cases, we apply anisotropic \( hp \)-refinement in space and time,
allowing for independent adjustment of the mesh size and polynomial degree in
each coordinate direction. No restrictions are imposed on the variation of
polynomial degrees within or across elements.
To assess the accuracy of the goal-oriented error estimator, we study the \emph{effectivity index}
\begin{equation}
  I_{\text{eff}} = \left\vert\frac{\eta_{h}^{\textup{a},\,\boldsymbol p+\boldsymbol 1} + \eta_{\tau}^{\boldsymbol p+\boldsymbol 1}}{J(u) - J(u_{\tau h})}\right\vert,
\end{equation}
where \(\eta_{h}^{\textup{a},\,\boldsymbol p+\boldsymbol 1}\coloneq\sum_{i=1}^{d}\eta_{h,i}^{\boldsymbol p+\boldsymbol 1}\). The sum of the spatial and temporal error estimators is
denoted by \(\eta_{\tau h}^{\textup{a},\,\boldsymbol p+\boldsymbol 1} =
\eta_{h}^{\textup{a},\,\boldsymbol p+\boldsymbol 1} + \eta_{\tau}^{\boldsymbol
  p+\boldsymbol 1}\). For the sake of brevity, we subsequently drop the $\boldsymbol
p+\boldsymbol 1$ superscript.
As an indicator of mesh anisotropy, we track the maximum aspect ratio,
defined by
\begin{equation}
  \rho_{\max} \coloneq \max_{K \in \mathcal T_h} \rho_K,
\end{equation}
where \(\rho_K\) is given by~\eqref{eq:loc-aniso-ratio}.
We denote the total number of DoFs by \(N_{\text{tot}}\), and
split them into spatial and temporal DoFs \(N_{\boldsymbol x}\) and \(N_t\), respectively.
All simulations are implemented using the \texttt{deal.II} finite element
library~\cite{africa_dealii_2024} and executed on one node of the HSUper
HPC cluster at Helmut Schmidt University, equipped with two Intel Xeon Platinum
8360Y CPUs and
\SI[scientific-notation=false,round-precision=0]{1024}{\giga\byte} of RAM. 

\subsection{Interior Layer}\label{sec:interior-layer}
This well-known benchmark has an exact solution
\begin{equation}
  \label{eq:step-layer}
  u(\symbf{x}, t) = \frac{\e^{3(t - 1.0)}}{2}\left( 1 - \tanh \frac{2x-y-\frac{1}{2}}{\sqrt{5\varepsilon}}\right)
\end{equation}
with a sharp interior layer, of thickness
$\mathcal{O}(\sqrt{\varepsilon}|\log \varepsilon|)$. The problem is defined on
$\Omega\times I=(0,\,1)^2\times (0,\,1]$ with inhomogeneous boundary
conditions given by~\eqref{eq:step-layer} on the whole boundary
$\Gamma_D=\partial \Omega$. The convection is set to
$\symbf{b}=\frac{1}{\sqrt{5}}(1,\,2)^{\top}$, the diffusion to
\(\varepsilon = 10^{-6}\) or \(\varepsilon = 10^{-8}\) and the reaction coefficient is given by $\alpha=1$.
As sketched in Fig.~\ref{fig:interior-layer-geo}, we employ unstructured meshes to solve the problem. We consider two different goal functionals: the $L^2$
error in space-time
\begin{equation}
  \label{eq:JL2L2}
  J(u)= \frac{1}{\|e\|_{\Omega\times I}}\displaystyle\int_I(u,e)\drv t\,,
  \quad \mathrm{with} \;\; \|\cdot\|_{\Omega\times I}
  = \left(\int_I(\cdot,\cdot)\;\drv t\right)^{\frac{1}{2}},\; e\coloneq u-u_{\tau h}\,.
\end{equation}
and a point value control inside the interior layer at final time
\begin{equation}
  \label{eq:JTu}
  J(u)=u(\boldsymbol x_{c},\,T)\,,\quad\text{where}\quad\boldsymbol x_{c}=\left(\frac{1}{2},\,\frac{1}{2}\right)\,.
\end{equation}
We regularize the point functional~\eqref{eq:JTu} by a regularized Dirac delta function
$\delta_{s,\boldsymbol x_e}(r)=\alpha\operatorname{e}^{(1-1/(1-r^2/s^2))}$, where
$r=\lVert \boldsymbol x-\boldsymbol x_c\rVert$, $s>0$ is the cutoff radius and
$\alpha$ the scaling factor, such that $\delta_{s,\boldsymbol x_c}$ integrates to
$1$. 
In all configurations, we
refine and coarsen a fixed fraction of cells in both space and time. In space
and time, the refinement fraction is set to
\(\theta_{\text{space}}^{\text{ref}} = \theta_{\text{time}}^{\text{ref}} =
\frac{1}{8}\), and the coarsening fraction is
\(\theta_{\text{space}}^{\text{co}} = \theta_{\text{time}}^{\text{co}} =
\frac{1}{25}\). We use polynomial degrees \(1 \leq p, k \leq 9\) in space and
time, without imposing restrictions on their variation within a cell or across
neighboring cells. The initial space-time triangulation consists of a single
time cell and a once uniformly refined spatial coarse mesh (cf. Fig.~\ref{fig:interior-layer-geo}).
\begin{figure}[!htb]
  \centering
  \begin{minipage}{0.48\textwidth}\centering
    \begin{tikzpicture}[scale=4, very thick]
      \draw[gray] (0.25, 0) -- ++ (0,1);%
      \draw[gray] (0, 0.5) -- ++ (0.25,0);%
      \draw[gray] (0.75, 0) -- ++ (0,1);%
      \draw[gray] (0.75, 0.5) -- ++ (.25,0);%
      \coordinate (A) at (0.4166666,0.6666666);%
      \coordinate (B) at (0.5833333,0.3333333);%
      \draw[gray] (A) -- (B);%
      \draw[gray] (0.25, 0) -- (0.75, 1);%
      \draw[ultra thick,myred,-latex] (0.5, 0.6) --node[left,myred]{$\mathbf{b}$} ++
      (0.125,0.25);%
      \draw[ultra thick,myred,-latex] (0.585, 0.561) -- ++ (0.125,0.25);%
      \draw[gray] (A) -- (0.25, 0.5);%
      \draw[gray] (A) -- (0.5, 1);%
      \draw[gray] (B) -- (0.75, 0.5);%
      \draw[gray] (B) -- (0.5, 0);%
      \draw[ultra thick,mygreen] (0, 0) rectangle (1,1);%
    \end{tikzpicture}
  \end{minipage}
  \begin{minipage}{0.48\textwidth}\centering
    \includegraphics[width=0.66\linewidth]{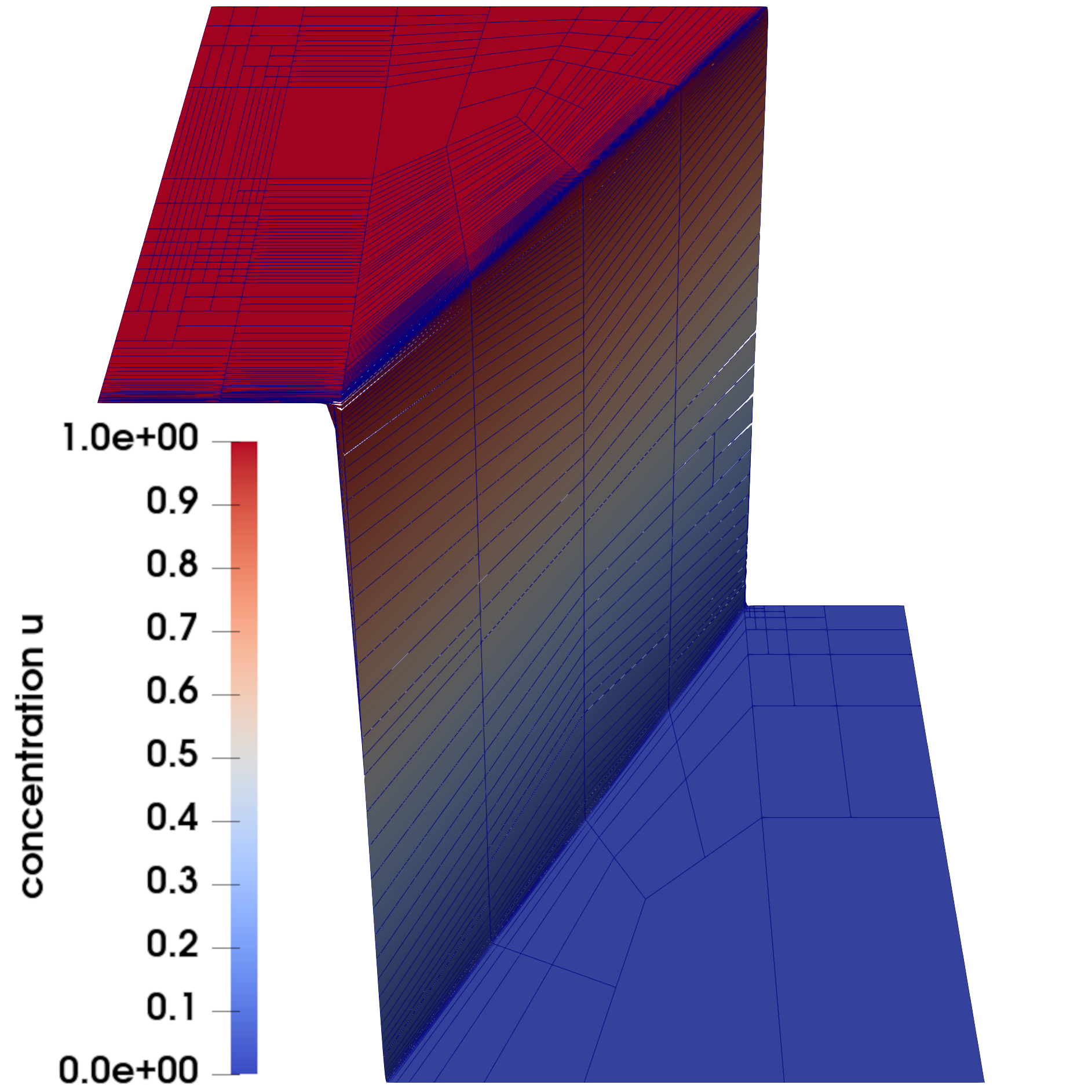}
  \end{minipage}
  \caption{Geometry and coarse, unstructured mesh of the domain (left) and the
    best adaptive solution obtained for the $L^2$ space-time error control. On
    the left, green coloring
    corresponds to inhomogeneous Dirichlet BCs.}\label{fig:interior-layer-geo}
\end{figure}

\begin{table}
\caption{\label{tab:step-layer-l2l2}Number of DoFs, error estimators, error,
  and effectivity index for the step-layer benchmark with $L^2$ space-time
  error control~\eqref{eq:JL2L2} and diffusion coefficient $\varepsilon = 10^{-6}$.}
\begin{center}\scriptsize
  \begin{tabular}{S[scientific-notation=false,round-precision=0,table-format=2]
    S[scientific-notation=false,round-precision=0,table-format=3]
    S[scientific-notation=false,round-precision=0,table-format=5]
    S[scientific-notation=false,round-precision=0,table-format=5]
    S[scientific-notation=false,round-precision=0,table-format=1]
    S[table-format=1.1e2]%
    S[table-format=1.1e2]%
    S[table-format=1.1e2]
    S[table-format=1.1e2]
    S[table-format=1.1e2]
    S[table-format=1.1e2]
    S[table-format=1.1e2]
    S[table-format=1.1e2]
    S[scientific-notation=false]}
    \toprule
    \mc{$\ell$} & \mc{$|\mathcal T_h|$} & \mc{$N_{\boldsymbol x}^{\mathrm{lo}}$} & \mc{$N_{\boldsymbol x}^{\mathrm{ho}}$} & \mc{$N_t$} & \mc{\textup{work}} & \mc{$\rho_{\max}$} & \mc{$\eta_{h,1}$} & \mc{$\eta_{h,2}$} & \mc{$\eta_h^{\textup{a}}$} & \mc{$\eta_\tau$} & \mc{$\eta_{\tau h}^{\textup{a}}$} & \mc{$e$} & \mc{$I_{\text{eff}}$} \\
\midrule
0  & 40 & 160 & 360 & 2 & 3520  & 4.2 &
-8.59e-03 & -4.96e-03 & -1.36e-02 & -1.59e-02 & -2.95e-02 & 1.72e-01 & 0.17 \\
1  & 40 & 182 & 392 & 3 & 7644  & 4.2 &
-3.39e-02 & -1.18e-02 & -4.57e-02 & -3.20e-03 & -4.89e-02 & 4.14e-02 & 1.18 \\
2  & 40 & 203 & 423 & 3 & 8526  & 4.2 &
-3.04e-02 & -9.99e-03 & -4.04e-02 & -2.91e-03 & -4.33e-02 & 4.14e-02 & 1.05 \\
3  & 40 & 223 & 453 & 3 &10035  & 4.2 &
-2.87e-02 & -9.47e-03 & -3.82e-02 & -2.91e-03 & -4.11e-02 & 4.14e-02 & 0.99 \\
4  & 40 & 243 & 483 & 3 &10935  & 4.2 &
-2.76e-02 & -9.11e-03 & -3.67e-02 & -2.91e-03 & -3.96e-02 & 4.14e-02 & 0.96 \\
5  & 40 & 263 & 513 & 3 &11835  & 4.2 &
-2.68e-02 & -8.88e-03 & -3.57e-02 & -2.91e-03 & -3.86e-02 & 4.14e-02 & 0.93 \\
6  & 40 & 283 & 543 & 3 &12735  & 4.2 &
-2.63e-02 & -8.71e-03 & -3.50e-02 & -2.91e-03 & -3.79e-02 & 4.14e-02 & 0.92 \\
7  & 40 & 303 & 573 & 3 &12726  & 4.2 &
-2.59e-02 & -8.58e-03 & -3.45e-02 & -2.91e-03 & -3.74e-02 & 4.14e-02 & 0.90 \\
8  & 40 & 323 & 603 & 3 &13566  & 4.2 &
-2.56e-02 & -8.47e-03 & -3.41e-02 & -2.91e-03 & -3.70e-02 & 4.14e-02 & 0.89 \\
11 & 70 & 898 &1545 & 4 &64656  &29.9 &
-1.29e-02 & -4.13e-03 & -1.70e-02 &  6.81e-04 & -1.63e-02 & 1.08e-02 & 1.51 \\
12 & 87 &1215 &2060 & 4 &87480  &59.8 &
-7.21e-03 & -2.48e-03 & -9.69e-03 &  7.87e-04 & -8.90e-03 & 7.84e-03 & 1.14 \\
23 &884 &22962&34441& 7 &4018350&6124.5&
 1.39e-05 & -1.41e-06 &  1.25e-05 & -1.79e-08 &  1.25e-05 & 2.44e-05 & 0.51 \\
\bottomrule
\end{tabular}
\end{center}
\end{table}

\paragraph{Space-time $L^2$ error control.} Tab.~\ref{tab:step-layer-l2l2}
demonstrates good control of the \(L^2\) error with effectivity indices
\(I_{\text{eff}}\) remaining close to 1 over a wide range of refinements,
indicating accurate and reliable a posteriori error estimation even with
anisotropic $hp$ refinements. Moreover, the maximum aspect ratio \(\rho_{\max}\)
increases significantly, reaching values above 6000, which underscores the
robustness of the method with respect to highly anisotropic elements. This
confirms the capability of the refinement strategy to efficiently resolve
anisotropic features, such as the interior layer, while maintaining
effectivity. Fig~\ref{fig:interior-layer-geo} shows a plot of the solution
with the highly refined mesh along the interior layer.

\begin{figure}[!htb]
  \centering
  \scalebox{0.95}{
    \begin{tikzpicture}
      \node[anchor=south west,inner sep=0, outer sep=0] (img) at (0,0)
      {\includegraphics[width=\textwidth]{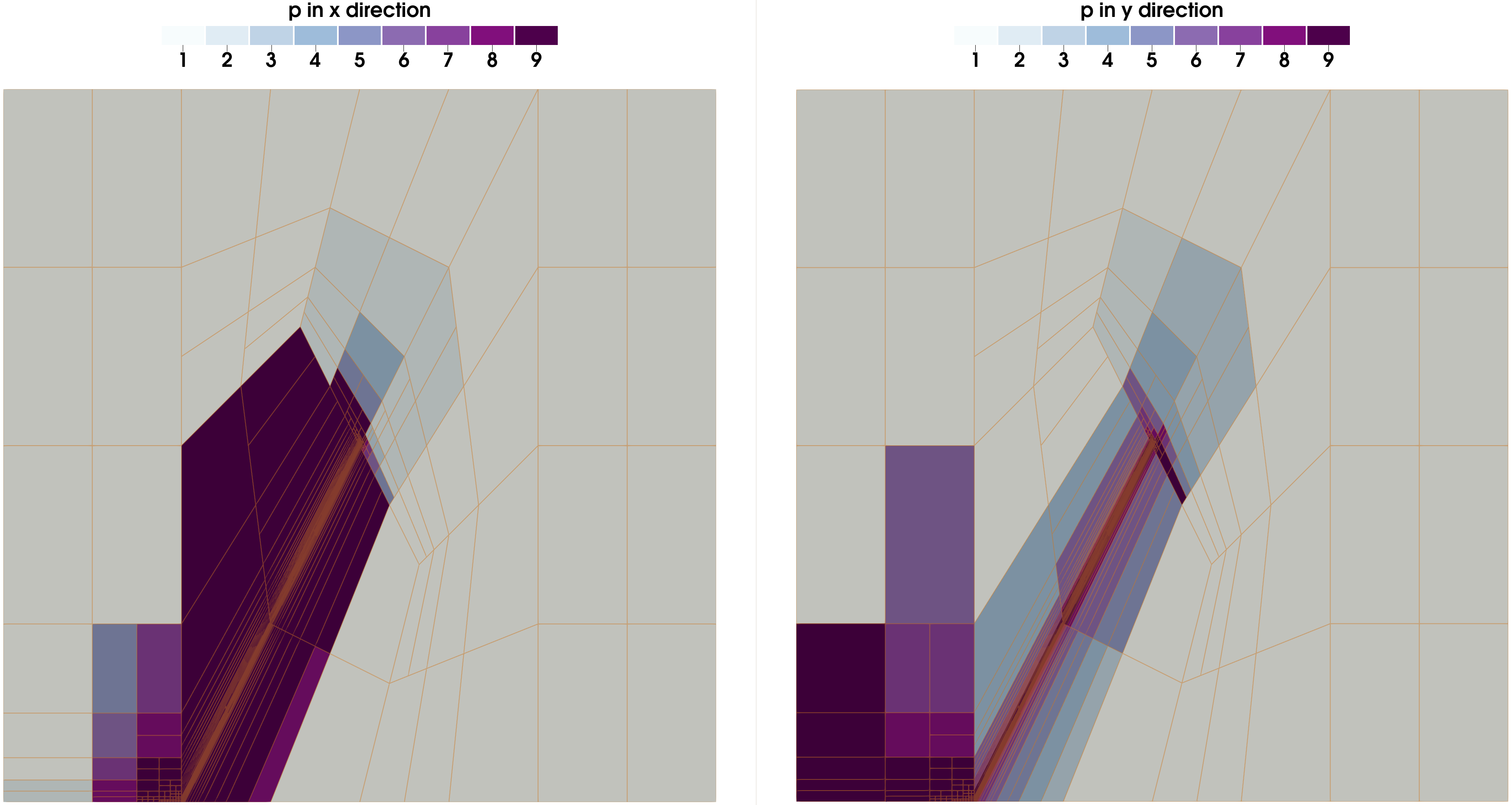}};%
      \node[draw=mygreen,anchor=south,line width=3pt,inner sep=0, outer
      sep=0,yshift=-1.7cm] at ($(img.south west) + 0.38*(img.south east)$)
      {\includegraphics[width=.275\textwidth]{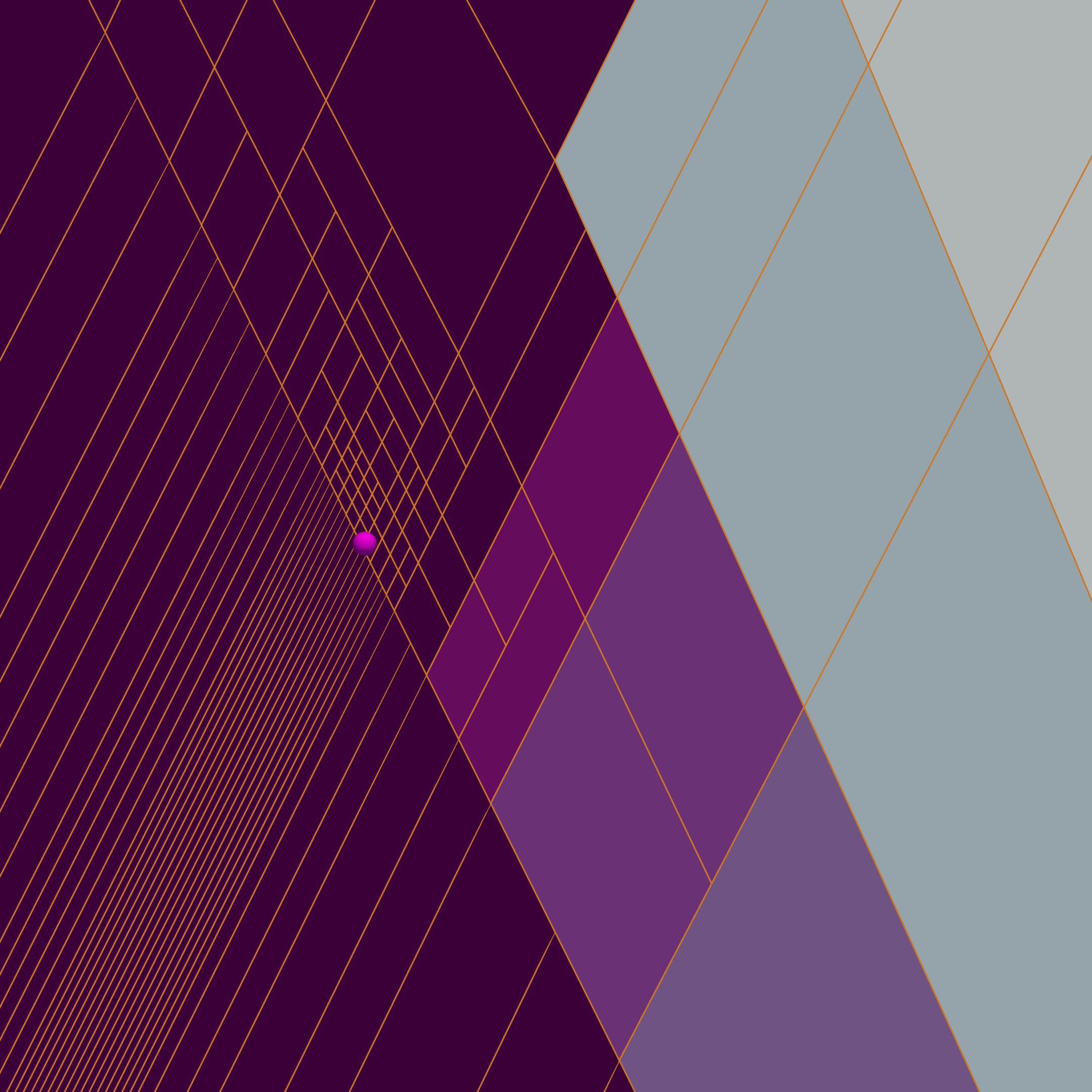}};%
      \node[draw=mygreen,anchor=south,line width=3pt,inner sep=0, outer
      sep=0,yshift=-1.7cm] at ($(img.south west) + 0.905*(img.south east)$)
      {\includegraphics[width=.275\textwidth]{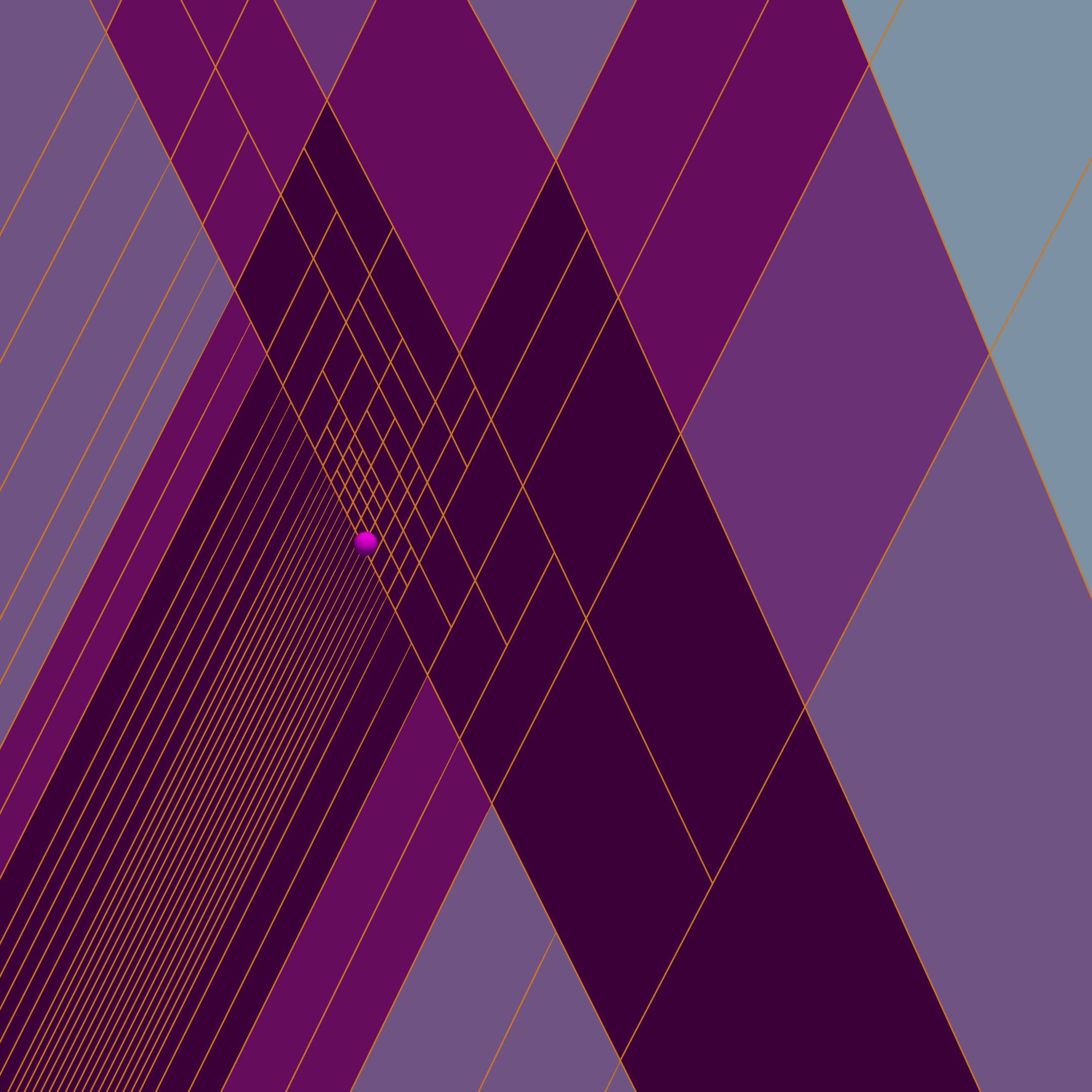}};%
      \coordinate (pos1) at
      ($(img.south west) + 0.765*(img.south east) + 0.45*(img.north west)$);
      \draw[mygreen,very thick] (pos1) +(-5pt,-5pt) rectangle +(5pt,5pt) ;
      \coordinate (pos2) at
      ($(img.south west) + 0.241*(img.south east) + 0.45*(img.north west)$);
      \draw[mygreen,very thick] (pos2) +(-5pt,-5pt) rectangle +(5pt,5pt) ;%
    \end{tikzpicture}}
\caption{\label{fig:point-mesh}Anisotropic $hp$-mesh after $18$ refinement steps (cf. Tables~\ref{tab:step-layer-point-value}) with diffusion coefficient $\varepsilon=10^{-6}$. The green squares
  are zoomed in to visualize the high anisotropic $h$ and $p$ refinement around
  the goal point. Within the mesh we observe high anisotropies in the mesh size
  and polynomial degree.}
\end{figure}
\paragraph{Point value control.}
To demonstrate that our anisotropic \( hp \)-refinement strategy generalizes
well across a range of convection-dominated regimes, we test two diffusion
coefficients \(\varepsilon = 10^{-6}\) and \(\varepsilon = 10^{-8}\).
\begin{table}
  \caption{\label{tab:step-layer-point-value}
    Number of DoFs, error estimators, error, and effectivity indices for the step-layer benchmark with point value control~\eqref{eq:JTu} and diffusion coefficient $\varepsilon = 10^{-6}$.}
\begin{center}\scriptsize
  \begin{tabular}{S[scientific-notation=false,round-precision=0,table-format=2]
    S[scientific-notation=false,round-precision=0,table-format=3]
    S[scientific-notation=false,round-precision=0,table-format=5]
    S[scientific-notation=false,round-precision=0,table-format=5]
    S[scientific-notation=false,round-precision=0,table-format=1]
    S[table-format=1.1e2]%
    S[table-format=1.1e2]%
    S[table-format=1.1e2]
    S[table-format=1.1e2]
    S[table-format=1.1e2]
    S[table-format=1.1e2]
    S[table-format=1.1e2]
    S[table-format=1.1e2]
    S[scientific-notation=false]}
\toprule
    \mc{$\ell$} & \mc{$|\mathcal T_h|$} & \mc{$N_{\boldsymbol x}^{\mathrm{lo}}$} & \mc{$N_{\boldsymbol x}^{\mathrm{ho}}$} & \mc{$N_t$} & \mc{\textup{work}} & \mc{$\rho_{\max}$} & \mc{$\eta_{h,1}$} & \mc{$\eta_{h,2}$} & \mc{$\eta_h^{\textup{a}}$} & \mc{$\eta_\tau$} & \mc{$\eta_{\tau h}^{\textup{a}}$} & \mc{$e$} & \mc{$I_{\text{eff}}$} \\
\midrule
0  & 40 & 160 & 360 & 2 & 2880   & 4.2 &
9.68e+0 & 2.61e+0 & 1.229e+1 & 3.44e-1 & 1.263e+1 & 1.10e-1 & 114.82 \\
1  & 40 & 185 & 396 & 2 & 3330   & 4.2 &
1.32e+1 & 2.11e-1 & 1.341e+1 & 1.20e+0 & 1.461e+1 & 1.15e-1 & 127.05 \\
2  & 42 & 231 & 467 & 2 & 4158   & 4.2 &
2.54e+1 & 3.10e-1 & 2.571e+1 & 2.45e+0 & 2.816e+1 & 1.18e-1 & 238.64 \\
3  & 44 & 273 & 532 & 2 & 5460   & 4.2 &
2.37e+1 & 4.32e-1 & 2.413e+1 & 4.02e+0 & 2.815e+1 & 1.18e-1 & 238.56 \\
4  & 49 & 362 & 669 & 2 & 7240   & 6.1 &
3.29e+0 & -7.19e-2 & 3.218e+0 & 1.76e+0 & 4.978e+0 & 1.18e-1 & 42.18 \\
5  & 56 & 474 & 842 & 3 & 18486  & 11.5 &
4.33e-1 & 1.06e-1 & 5.39e-1 & 6.56e-2 & 6.05e-1 & 1.41e-1 & 4.29 \\
6  & 56 & 539 & 922 & 3 & 21021  & 11.5 &
3.07e-1 & 1.08e-2 & 3.178e-1 & -4.21e-2 & 2.757e-1 & 2.06e-1 & 1.34 \\
7  & 56 & 611 & 1009 & 3 & 23829  & 11.5 &
3.57e-1 & 2.47e-2 & 3.817e-1 & -2.84e-2 & 3.533e-1 & 2.91e-1 & 1.21 \\
14 & 181 & 10373 & 13100 & 6 & 1211640 & 239.0 &
-2.79e-7 & -1.14e-7 & -3.93e-7 & 4.79e-5 & 4.75e-5 & -2.95e-5 & 1.61 \\
15 & 249 & 16862 & 20974 & 7 & 2541924 & 467.6 &
9.00e-8 & -1.96e-7 & -1.06e-7 & -2.55e-6 & -2.66e-6 & 5.56e-7 & 4.78 \\
16    & 250   & 16980     & 21115     & 9      & 4278960  & 382.8  & 6.7121e-08  & -1.4605e-07 & -7.8931e-08 & -1.8690e-08 & -9.7621e-08 & 4.2012e-07  & 0.2324\\ 
17    & 350   & 26953     & 33184     & 9      & 6549579  & 383.4  & 3.3761e-08  & -3.8931e-08 & -5.1699e-09 & -1.3831e-08 & -1.9001e-08 & -2.8858e-09 & 6.5843\\ 
18    & 490   & 40943     & 50113     & 9      & 9949149  & 382.8  & 4.8871e-08  & -2.3998e-08 & -2.4873e-08 & -2.2569e-08 &  2.3040e-09 & -1.6561e-09 & 1.3912\\ 
\bottomrule
\end{tabular}
\end{center}
\end{table}
\begin{table}
  \caption{\label{tab:step-layer-point-value-1e-8}
    Number of DoFs, error estimators, error, and effectivity indices for the step-layer benchmark with point value control~\eqref{eq:JTu} and diffusion coefficient $\varepsilon = 10^{-8}$.}
\begin{center}\scriptsize
  \begin{tabular}{S[scientific-notation=false,round-precision=0,table-format=2]
    S[scientific-notation=false,round-precision=0,table-format=3]
    S[scientific-notation=false,round-precision=0,table-format=5]
    S[scientific-notation=false,round-precision=0,table-format=5]
    S[scientific-notation=false,round-precision=0,table-format=1]
    S[table-format=1.1e2]%
    S[table-format=1.1e2]%
    S[table-format=1.1e2]
    S[table-format=1.1e2]
    S[table-format=1.1e2]
    S[table-format=1.1e2]
    S[table-format=1.1e2]
    S[table-format=1.1e2]
    S[scientific-notation=false]}
    \toprule
    \mc{$\ell$} & \mc{$|\mathcal T_h|$} & \mc{$N_{\boldsymbol x}^{\mathrm{lo}}$} & \mc{$N_{\boldsymbol x}^{\mathrm{ho}}$} & \mc{$N_t$} & \mc{\textup{work}} & \mc{$\rho_{\max}$} & \mc{$\eta_{h,1}$} & \mc{$\eta_{h,2}$} & \mc{$\eta_h^{\textup{a}}$} & \mc{$\eta_\tau$} & \mc{$\eta_{\tau h}^{\textup{a}}$} & \mc{$e$} & \mc{$I_{\text{eff}}$} \\
    \midrule
0 & 40 & 160 & 360 & 2 & 2880 & 4.2 &
9.69e+0 & 2.61e+0 & 1.23e+1 & 3.43e-1 & 1.26e+1 & -7.63e-2 & 165 \\
1 & 40 & 183 & 393 & 2 & 3294 & 4.2 &
1.32e+1 & 2.02e-1 & 1.34e+1 & 1.15e+0 & 1.46e+1 & -7.16e-2 & 204 \\
2 & 42 & 223 & 456 & 2 & 4014 & 4.2 &
2.81e+1 & 6.59e-1 & 2.88e+1 & 2.65e+0 & 3.15e+1 & -6.84e-2 & 461 \\
3 & 46 & 274 & 542 & 2 & 5480 & 4.2 &
2.58e+1 & 4.04e-1 & 2.62e+1 & 2.78e+0 & 2.90e+1 & -6.84e-2 & 424 \\
4 & 52 & 339 & 654 & 2 & 6780 & 6.1 &
1.64e+1 & 1.53e-1 & 1.66e+1 & 1.81e+0 & 1.84e+1 & -6.84e-2 & 269 \\
5 & 60 & 429 & 804 & 2 & 8580 & 11.5 &
1.92e+0 & 1.68e-2 & 1.94e+0 & 8.42e-1 & 2.78e+0 & -9.28e-2 & 30 \\
6 & 60 & 489 & 879 & 3 & 19071 & 11.5 &
3.10e-1 & -7.29e-3 & 3.03e-1 & 6.66e-2 & 3.69e-1 & 2.08e-2 & 17.8 \\
7 & 60 & 557 & 962 & 3 & 21723 & 11.5 &
1.86e-1 & -1.26e-2 & 1.73e-1 & 1.03e-2 & 1.83e-1 & -7.65e-2 & 2.39 \\
8 & 60 & 634 & 1054 & 3 & 24726 & 11.5 &
7.47e-2 & 1.21e-2 & 8.68e-2 & 1.86e-2 & 1.05e-1 & -1.37e-2 & 7.67 \\
21 & 1245 & 112689 & 137154 & 7 & 17354106 & 1790.6 &
4.09e-9 & -3.89e-7 & -3.85e-7 & -2.67e-6 & -3.06e-6 & 5.05e-6 & 0.61 \\
22 & 1633 & 150792 & 183289 & 8 & 28952064 & 1901.7 &
6.15e-10 & -1.23e-6 & -1.23e-6 & 2.51e-7 & -9.80e-7 & 3.21e-7 & 3.05 \\
23 & 2153 & 201159 & 244348 & 8 & 38622528 & 1902.6 &
7.63e-10 & -4.44e-8 & -4.36e-8 & 2.52e-8 & -1.84e-8 & 1.63e-8 & 1.13 \\
\bottomrule
\end{tabular}
\end{center}
\end{table}
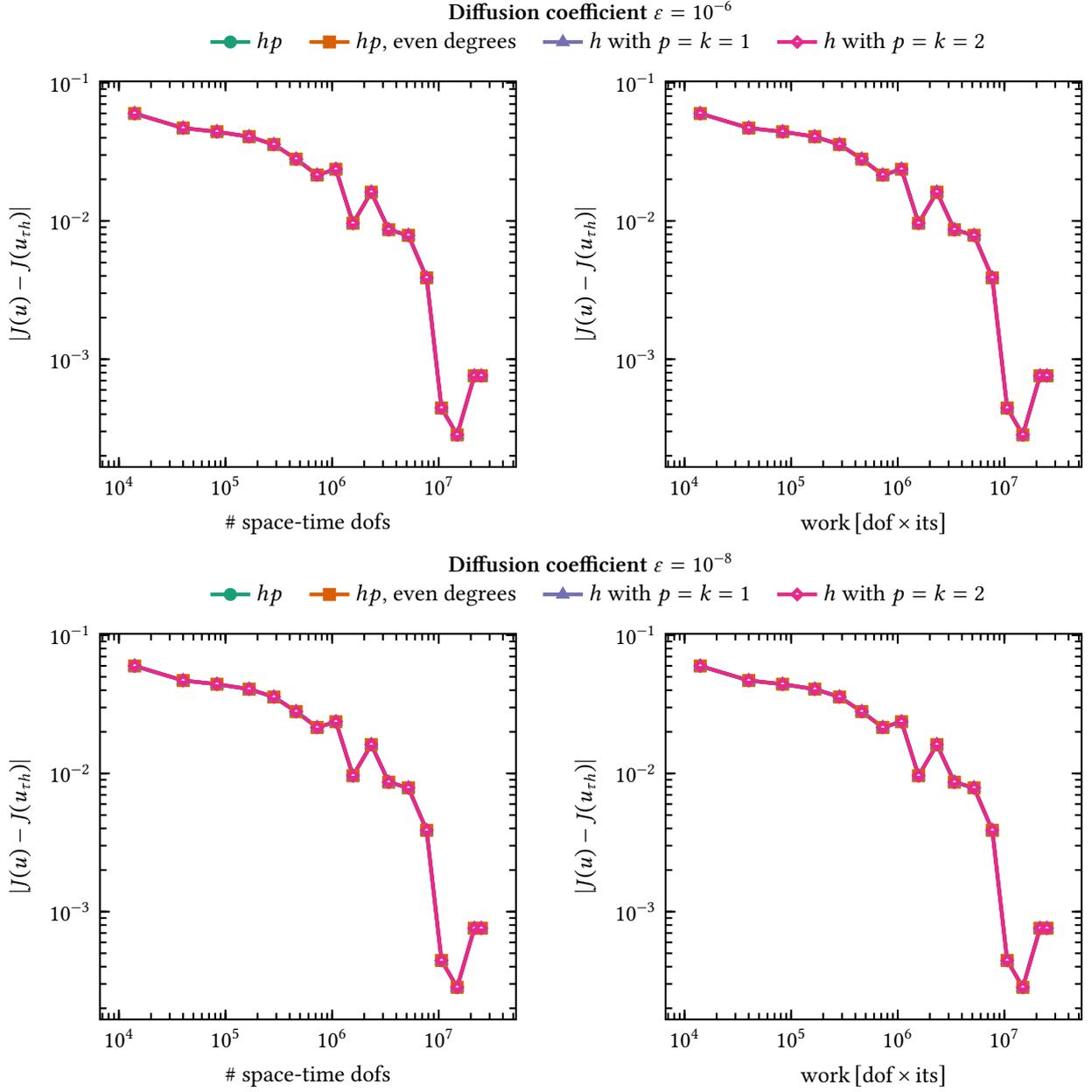
\begin{figure}[htbp]
  \begin{tikzpicture}[font=\small]
    \pgfplotsset{log origin=infty} \pgfplotsset{ colormap/Dark2-4, cycle
      multiindex* list={ mark list*\nextlist Dark2-4\nextlist }, every
      axis/.append style = {thick},}%
    \begin{groupplot}[group style={columns=2,group name=contest,horizontal
        sep=2.25cm},%
      legend columns=4,%
      legend style={/tikz/every even column/.append style={column
          sep=0.3cm},at={(1.2,1.05)},draw=none,anchor=south},%
      width=.475\textwidth,height=.45\textwidth]
      \nextgroupplot[xmode=log,ymode=log,ymode=log,log
      origin=infty,ylabel=$|J(u)-J(u_{\tau h})|$, xlabel={\# space-time dofs}]%
      \addplot +[solid,raw gnuplot,ultra thick] gnuplot {%
        plot 'convergence_default.txt' using ($1*$2):4 every :::0::0; };%
      \addlegendentry{$hp$} \addplot +[solid,raw gnuplot,ultra thick] gnuplot {%
        plot 'convergence_default.txt' using ($1*$2):4 every :::3::3; };%
      \addlegendentry{$hp$, even degrees} \addplot +[solid,raw gnuplot,ultra
      thick] gnuplot {%
        plot 'convergence_default.txt' using ($1*$2):4 every :::1::1; };%
      \addlegendentry{$h$ with $p=k=1$} \addplot +[solid,raw gnuplot,ultra
      thick] gnuplot {%
        plot 'convergence_default.txt' using ($1*$2):4 every :::2::2; };%
      \addlegendentry{$h$ with $p=k=2$}%
      \nextgroupplot[xmode=log,ymode=log,log
      origin=infty,ylabel=$|J(u)-J(u_{\tau h})|$,
      xlabel={$\text{work}\,[\text{dof}\times\text{its}]$}]%
      \addplot +[solid,raw gnuplot,ultra thick] gnuplot {%
        plot 'convergence_default.txt' using 3:4 every :::0::0; };%
      \addplot +[solid,raw gnuplot,ultra thick] gnuplot {%
        plot 'convergence_default.txt' using 3:4 every :::3::3; };%
      \addplot +[solid,raw gnuplot,ultra thick] gnuplot {%
        plot 'convergence_default.txt' using 3:4 every :::1::1; };%
      \addplot +[solid,raw gnuplot,ultra thick] gnuplot {%
        plot 'convergence_default.txt' using 3:4 every :::2::2; };%
    \end{groupplot}
    \node (title) at ($(contest c1r1.center)!0.5!(contest c2r1.center)+(0,4cm)$)
    {\bfseries Diffusion coefficient $\varepsilon = 10^{-6}$};
  \end{tikzpicture}

  \begin{tikzpicture}[font=\small]
    \pgfplotsset{log origin=infty} \pgfplotsset{ colormap/Dark2-4, cycle
      multiindex* list={ mark list*\nextlist Dark2-4\nextlist }, every
      axis/.append style = {thick},}%
    \begin{groupplot}[group style={columns=2,group name=contest,horizontal
        sep=2.25cm},%
      legend columns=4,%
      legend style={/tikz/every even column/.append style={column
          sep=0.3cm},at={(1.2,1.05)},draw=none,anchor=south},%
      width=.475\textwidth,height=.45\textwidth]
      \nextgroupplot[xmode=log,ymode=log,log
      origin=infty,ylabel=$|J(u)-J(u_{\tau h})|$, xlabel={\# space-time dofs}]%
      \addplot +[solid,raw gnuplot,ultra thick] gnuplot {%
        plot 'convergence_default1e-8.txt' using ($1*$2):4 every :::0::0; };%
      \addlegendentry{$hp$} \addplot +[solid,raw gnuplot,ultra thick] gnuplot {%
        plot 'convergence_default1e-8.txt' using ($1*$2):4 every :::3::3; };%
      \addlegendentry{$hp$, even degrees} \addplot +[solid,raw gnuplot,ultra
      thick] gnuplot {%
        plot 'convergence_default1e-8.txt' using ($1*$2):4 every :::1::1; };%
      \addlegendentry{$h$ with $p=k=1$} \addplot +[solid,raw gnuplot,ultra
      thick] gnuplot {%
        plot 'convergence_default1e-8.txt' using ($1*$2):4 every :::2::2; };%
      \addlegendentry{$h$ with $p=k=2$}%
      \nextgroupplot[xmode=log,ymode=log,log
      origin=infty,ylabel=$|J(u)-J(u_{\tau h})|$,
      xlabel={$\text{work}\,[\text{dof}\times\text{its}]$}]%
      \addplot +[solid,raw gnuplot,ultra thick] gnuplot {%
        plot 'convergence_default1e-8.txt' using 3:4 every :::0::0; };%
      \addplot +[solid,raw gnuplot,ultra thick] gnuplot {%
        plot 'convergence_default1e-8.txt' using 3:4 every :::3::3; };%
      \addplot +[solid,raw gnuplot,ultra thick] gnuplot {%
        plot 'convergence_default1e-8.txt' using 3:4 every :::1::1; };%
      \addplot +[solid,raw gnuplot,ultra thick] gnuplot {%
        plot 'convergence_default1e-8.txt' using 3:4 every :::2::2; };%
    \end{groupplot}
    \node (title) at ($(contest c1r1.center)!0.5!(contest c2r1.center)+(0,4cm)$)
    {\bfseries Diffusion coefficient $\varepsilon = 10^{-8}$};
  \end{tikzpicture}
  \caption{\label{fig:accuracy-work}Comparison between adaptive anisotropic
    refinement strategies: anisotropic $hp$ refinement considering all
    polynomials \( p, k \in \{1,\dots,9\} \) and only even polynomials \( p, k \in \{2,4,6,8,10\} \). To validate the efficiency of the anisotropic $hp$
    refinement we consider $h$ anisotropic refinement with isotropic linear and
    quadratic polynomial degree (uniform in space and time).}
\end{figure}

In Fig.~\ref{fig:point-mesh}, we show the anisotropic \( hp \)-mesh obtained in the final DWR loop for $\varepsilon=10^{-6}$. Strong local anisotropy in both mesh size and polynomial degree is
observed close to the goal point. Highly refined elements are aligned with the
interior layer. Far from $\boldsymbol x_{c}$, we observe no refinement in $h$
and $p$. Note that we observe cells with linear polynomials in one and $9$th
order polynomials in the other direction.

Tables~\ref{tab:step-layer-point-value}
and~\ref{tab:step-layer-point-value-1e-8} show that the pointwise error
converges very fast under anisotropic \( hp \)-refinement. While the initial
effectivity indices \( I_{\text{eff}} \) are high due to overestimation in the
early refinement stages, they stabilize toward moderate values as the mesh
better resolves the behavior close to the evaluation point. The maximum aspect ratio
\(\rho_{\max}\) increases substantially, highlighting the method's robustness in
resolving localized features with strongly anisotropic elements. These results
confirm that the employed refinement strategy is highly effective for accurate
pointwise error control. Fig.~\ref{fig:accuracy-work} underscores that the
anisotropic \( hp \)-refinement strategy, employing either the full range
\( p, k \in \{1,\dots,9\} \) or only even degrees \( p, k \in \{2,4,6,8,10\} \),
achieves exponential convergence of the error $|J(u)-J(u_{\tau h})|$ with
respect to the number of degrees of freedom. Thus, this strategy clearly
outperforms anisotropic \( h \)-refinement with fixed polynomial degree
(\( p = k = 1 \) or \( p = k = 2 \)). Although a pre-asymptotic regime is
visible in the convergence behavior, the results clearly demonstrate the
superior approximation efficiency of the anisotropic \( hp \)-refinement
strategy. Notably, the smaller the diffusion coefficient
(\(\varepsilon = 10^{-6}\) and \(\varepsilon = 10^{-8}\)), the more pronounced
the performance gains of anisotropic \( hp \)-refinement, which efficiently
resolves the increasingly localized features of the solution. We also observe
exponential convergence of the error \(|J(u) - J(u_{\tau h})|\) with respect to
the computational work \(w\), defined by
\begin{equation}
  \label{eq:work}
  w = \sum_{i=1}^{|\mathcal{T}_{\tau}|} (k_i + 1) \cdot N^{\symbf{x}} \cdot N^{it}_i,
\end{equation}
where \(N^{it}_i\) is the number of GMRES iterations required to solve the local
subproblem on the time interval \(I_i\). This metric accounts for the full
computational cost of solving the global space-time problem. Notably, the same
exponential convergence behavior observed with respect to the number of degrees
of freedom is retained when measured against the work. This highlights the
efficiency and scalability of the overall approach and confirms the robustness
of the linear solver introduced in Sec.~\ref{sec:solvers}. The restriction to
even polynomial degrees can lead to a more economical discretization by
accelerating error reduction in the goal functional and allowing the method to
leave the pre-asymptotic regime earlier.

Overall, we observe exponential convergence of the point value error with
respect to both degrees of freedom and work. Despite extreme anisotropies
(aspect ratios > 1900) and high polynomial degrees, the solver remains robust and
efficient. While the spatial and temporal estimator contributions initially
differ by orders of magnitude, refinement balances them. This shows the
effectiveness of goal-oriented refinement in concentrating resolution where
needed for accurate point value control.

\FloatBarrier
\subsection{Instationary Hemker Example}
\label{sec:hemker}
We investigate the classical Hemker problem~\cite{hemker_singularly_1996} in a time-dependent setting.
We consider the convection-diffusion-reaction system given by
equation~\eqref{eq:CDR_system}, and we set \(\alpha = 0\) and \(f = 0\),
$\symbf{b}=(1,\,0)^{\top}$. The diffusion coefficient is
\(\varepsilon = 10^{-4}\) or \(\varepsilon = 10^{-6}\). This guarantees that the
heat is entirely advected through the domain without any inherent
degradation. Consequently, any observed decrease in
heat arises solely from numerical artifacts, aside from a negligible
reduction induced by the diffusion term. The computational space-time domain is
defined as \(\Omega \times I\) with $I=(0,\,10]$ and
\( \Omega = \left( (-3,\,8) \times (-3,\,3) \right) \setminus \{ (x,\,y)
\suchthat x^2 + y^2 \leq 1 \}\), illustrated with the coarse mesh in
Fig.~\ref{fig:hemker-geo}. The boundary conditions are specified as follows:
The boundaries colored green in Fig.~\ref{fig:hemker-geo} correspond to
Dirichlet boundary conditions. Specifically, the boundary at \(x = -3\) is
subjected to homogeneous Dirichlet conditions $u=0$. The boundaries colored blue
indicate homogeneous Neumann boundary conditions $\partial_{\symbf{n}} u=0$. On
the circular boundary, inhomogeneous Dirichlet boundary conditions are imposed
with \(u = 1\).
The aim is to control
the error within a control point in the upper interior layer $\boldsymbol
x_{c}=(4,\,1)$. To this end, we use the
goal functional
\begin{equation}\label{eq:point-goal}
  J(u)=u(\boldsymbol x_{c},\,T).
\end{equation}
We regularize $J$ by a regularized Dirac delta function
$\delta_{s,\boldsymbol x_e}(r)$. We refine and coarsen a fixed fraction of cells in both space and time. In
space and time, the refinement fraction is set to
\(\theta_{\text{space}}^{\text{ref}} = \frac{1}{7}\) and
$\theta_{\text{time}}^{\text{ref}} = \frac{1}{8}$. The coarsening fraction is
set to
\(\theta_{\text{space}}^{\text{co}} = \theta_{\text{time}}^{\text{co}} =
\frac{1}{25}\). We use polynomial degrees \(1 \leq p \leq 9\) and
\(0 \leq k \leq 9\) in space and time, respectively. We do not put any
restrictions on their variation within a cell or across neighboring cells. The
initial space-time triangulation consists of the three times uniformly refined
time domain $I$ and a twice uniformly refined spatial coarse mesh (cf.
Fig.~\ref{fig:hemker-geo}).

\begin{figure}[htb]
  \centering
  \begin{tikzpicture}[scale=0.65,very thick]
    \draw[gray] (3, -3) -- ++ (0,6);
    \draw[gray] (-3, 3) -- ++ (6,-6);
    \draw[gray] (-3, -3) -- ++ (6,6);
    \draw[gray] (-3, 0) -- ++ (11,0);
    \draw[gray] (0, -3) -- ++ (0,6);
    \draw[ultra thick,myblue] (-3, -3) rectangle (8,3);
    \draw[ultra thick,myred,-latex] (2.5, 1.5) --node[above,myred]{$\symbf{b}$} ++ (2,0);
    \draw[ultra thick,myred,-latex] (2.5, -1.5) -- ++ (2,0);
    \draw[ultra thick,mygreen,fill=white] (0,0) circle (1);%
    \draw[ultra thick,mygreen] (-3,-3) -- node[left]{\color{mygreen}$\Gamma_D$}++(0,6);%
  \end{tikzpicture}\hfill
  \includegraphics[width=0.45\textwidth]{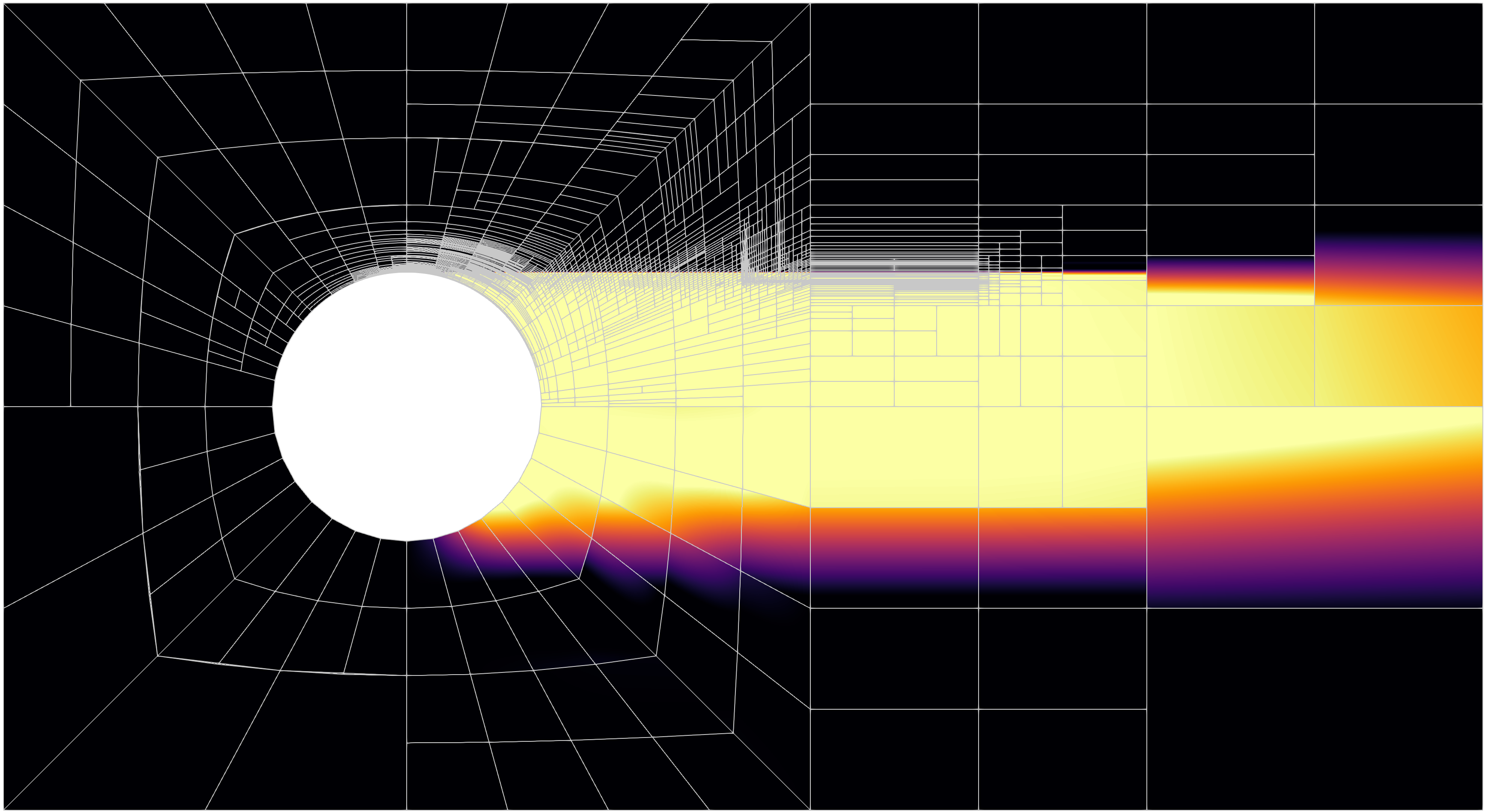}
  \caption{On the left: Geometry and coarse mesh of the domain for the Hemker
    problem. Green coloring corresponds to Dirichlet BCs, blue indicates
    homogeneous Neumann BCs. On the circle, inhomogeneous Dirichlet BCs are
    prescribed. The left boundary is associated with homogeneous Dirichlet BCs.
    On the right: The solution with mesh at the final time point after the 20th
    DWR loop with $\varepsilon=10^{-6}$. }
  \label{fig:hemker-geo}
\end{figure}

\begin{table}[htb]
  \caption{\label{tab:hemker-pv-1e-4}Number of DoFs, error estimators, error, and effectivity index for the Hemker benchmark with point value control~\eqref{eq:point-goal} and diffusion coefficient $\varepsilon = 10^{-4}$.}
\begin{center}\scriptsize
\begin{tabular}{S[scientific-notation=false,round-precision=0,table-format=2]
    S[scientific-notation=false,round-precision=0,table-format=3]
    S[scientific-notation=false,round-precision=0,table-format=5]
    S[scientific-notation=false,round-precision=0,table-format=5]
    S[scientific-notation=false,round-precision=0,table-format=1]
    S[table-format=1.1e2]%
    S[table-format=1.1e2]%
    S[table-format=1.1e2]
    S[table-format=1.1e2]
    S[table-format=1.1e2]
    S[table-format=1.1e2]
    S[table-format=1.1e2]}
\toprule
    \mc{$\ell$} & \mc{$|\mathcal T_h|$} & \mc{$N_{\boldsymbol x}^{\mathrm{lo}}$} & \mc{$N_{\boldsymbol x}^{\mathrm{ho}}$} & \mc{$N_t$} & \mc{\textup{work}} & \mc{$\rho_{\max}$} & \mc{$\eta_{h,1}$} & \mc{$\eta_{h,2}$} & \mc{$\eta_h^{\textup{a}}$} & \mc{$\eta_\tau$} & \mc{$\eta_{\tau h}^{\textup{a}}$} \\
\midrule
 0     & 160   & 640       & 1440      & 8      & 10240  & 4.1    & -6.575e-04 &  2.435e-04 & -4.140e-04  & 7.057e-05 & -3.434e-04  \\
 1     & 165   & 763       & 1632      & 8      & 12208  & 4.6    & -5.407e-03 & -5.343e-03 & -1.075e-02  & -2.025e-02 & -3.100e-02  \\
 2     & 173   & 912       & 1861      & 8      & 14592  & 4.6    & -8.211e-04 & -1.713e-03 & -2.534e-03  & -1.669e-02 & -1.922e-02  \\
 3     & 177   & 1077      & 2094      & 8      & 17232  & 6.7    & -2.752e-04 & -6.208e-03 & -6.483e-03  & -1.815e-02 & -2.463e-02  \\
 4     & 185   & 1275      & 2386      & 8      & 20400  & 6.7    & 2.184e-03  & 2.554e-04 & 2.439e-03   & -2.195e-02 & -1.951e-02  \\
 5     & 193   & 1478      & 2687      & 9      & 44340  & 6.7    & 2.113e-03  & 2.757e-03 & 4.870e-03   & -1.735e-02 & -1.248e-02  \\
 6     & 197   & 1670      & 2956      & 9      & 50100  & 6.7    & 2.581e-03  & 7.982e-04 & 3.379e-03   & -1.713e-02 & -1.375e-02  \\
 7     & 211   & 2071      & 3534      & 9      & 62130  & 6.9    & 2.380e-03  & -2.117e-04 & 2.168e-03  & -1.617e-02 & -1.400e-02  \\
 8     & 226   & 2484      & 4139      & 9      & 74520  & 13.3   & 1.713e-03  & -2.835e-03 & -1.122e-03  & -1.548e-02 & -1.660e-02  \\
 9     & 255   & 3568      & 5647      & 9      & 107040 & 13.3   & -2.527e-05 &  3.726e-04 & 3.473e-04   & -1.597e-02 & -1.563e-02  \\
 10    & 319   & 6251      & 9304      & 10     & 331303 & 13.3   & 1.831e-05  & -7.292e-05 & -5.461e-05  & -1.477e-02 & -1.482e-02  \\
 14    & 742   & 30148     & 40206     & 12     & 2954504& 106.7  & 2.114e-08  & -3.929e-03 & -3.928e-03  & -9.296e-04 & -4.857e-03 \\
 15    & 908   & 42201     & 55302     & 12     & 4135698& 106.7  & 8.676e-07  & -1.365e-03 & -1.364e-03  & -2.674e-03 & -4.038e-03  \\
 16    & 1163  & 62682     & 80664     & 12     & 6456246& 106.7  & -2.260e-07 & -5.747e-05 & -5.770e-05 & -6.480e-04 & -7.057e-04 \\
    \bottomrule
\end{tabular}
\end{center}
\end{table}
\begin{table}[htb]
  \caption{\label{tab:hemker-pv-1e-6}Number of DoFs, error estimators, error, and effectivity index for the Hemker benchmark with point value control~\eqref{eq:point-goal} and diffusion coefficient $\varepsilon = 10^{-6}$.}
\begin{center}\scriptsize
  \begin{tabular}{S[scientific-notation=false,round-precision=0,table-format=2]
    S[scientific-notation=false,round-precision=0,table-format=3]
    S[scientific-notation=false,round-precision=0,table-format=5]
    S[scientific-notation=false,round-precision=0,table-format=5]
    S[scientific-notation=false,round-precision=0,table-format=1]
    S[table-format=1.1e2]%
    S[table-format=1.1e2]%
    S[table-format=1.1e2]
    S[table-format=1.1e2]
    S[table-format=1.1e2]
    S[table-format=1.1e2]
    S[table-format=1.1e2]}
\toprule
    \mc{$\ell$} & \mc{$|\mathcal T_h|$} & \mc{$N_{\boldsymbol x}^{\mathrm{lo}}$} & \mc{$N_{\boldsymbol x}^{\mathrm{ho}}$} & \mc{$N_t$} & \mc{\textup{work}} & \mc{$\rho_{\max}$} & \mc{$\eta_{h,1}$} & \mc{$\eta_{h,2}$} & \mc{$\eta_h^{\textup{a}}$} & \mc{$\eta_\tau$} & \mc{$\eta_{\tau h}^{\textup{a}}$} \\
\midrule
0  & 160 & 640 & 1440 & 8 & 10240 & 4.1 &
-6.670e-04 &  2.462e-04 & -4.208e-04 &  7.155e-05 & -3.493e-04 \\
1  & 168 & 775 & 1659 & 8 & 12400 & 4.6 &
-5.412e-03 & -4.145e-03 & -9.557e-03 & -1.692e-02 & -2.648e-02 \\
2  & 175 & 928 & 1891 & 8 & 14848 & 4.6 &
-7.248e-03 & -3.715e-03 & -1.096e-02 & -1.700e-02 & -2.796e-02 \\
3  & 187 & 1171 & 2256 & 8 & 18736 & 6.7 &
-3.146e-03 &  5.023e-04 & -2.644e-03 & -1.924e-02 & -2.188e-02 \\
4  & 203 & 1433 & 2660 & 8 & 22928 & 12.1 &
-2.681e-03 & -4.278e-04 & -3.109e-03 & -2.019e-02 & -2.330e-02 \\
5  & 210 & 1688 & 3019 & 9 & 50640 & 13.3 &
 3.924e-04 &  6.264e-04 &  1.019e-03 & -1.380e-02 & -1.278e-02 \\
6  & 224 & 2137 & 3651 & 9 & 64110 & 13.3 &
 4.014e-04 &  7.267e-04 &  1.128e-03 & -1.382e-02 & -1.269e-02 \\
7  & 239 & 2601 & 4309 & 9 & 78030 & 26.7 &
 5.916e-04 &  1.493e-04 &  7.409e-04 & -1.359e-02 & -1.285e-02 \\
8  & 253 & 3150 & 5064 & 9 & 94500 & 26.7 &
 5.781e-04 &  4.614e-04 &  1.040e-03 & -1.349e-02 & -1.245e-02 \\
 9  & 290 & 4748 & 7206 & 9 & 142440 & 26.7 &
  1.718e-05 & -2.234e-04 & -2.062e-04 & -1.499e-02 & -1.520e-02 \\
 10 & 349 & 7241 & 10601 & 10 & 383773 & 26.7 &
  4.769e-08 &  1.511e-05 &  1.516e-05 & -1.345e-02 & -1.344e-02 \\
 14 & 843 & 41144 & 53396 & 12 & 3949824 & 53.3 &
 -3.284e-05 &  1.488e-04 &  1.159e-04 & -1.521e-03 & -1.405e-03 \\
15 & 1057 & 58657 & 75020 & 12 & 5865700 & 106.7 &
 -4.473e-05 &  1.343e-04 &  8.957e-05 & -1.539e-03 & -1.449e-03 \\
16 & 1317 & 81577 & 103094 & 12 & 8647162 & 213.3 &
 1.391e-06 & -2.807e-05 & -2.668e-05 & -1.549e-03 & -1.576e-03 \\
\bottomrule
\end{tabular}
\end{center}
\end{table}

\begin{figure}
  \scalebox{0.97}{
    \begin{tikzpicture}
      \node[anchor=south west,inner sep=0, outer sep=0] (img) at (0,0)
      {\includegraphics[width=.5\textwidth]{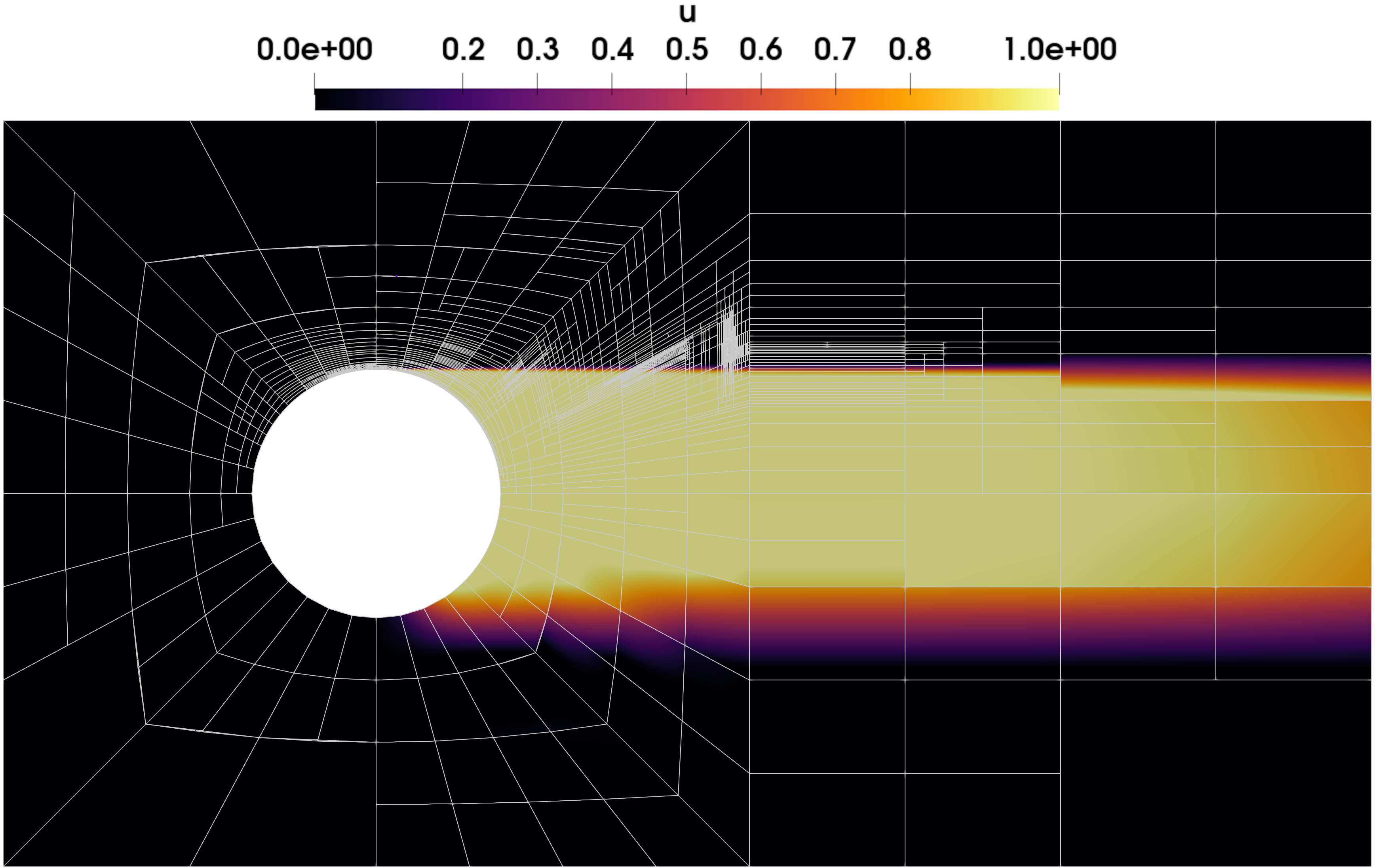}};%
      \node[shift={(0.25,0.0)},draw=mygreen,anchor=south west,line width=3pt,inner sep=0, outer sep=0] at (img.east)
      {\includegraphics[width=.5\textwidth]{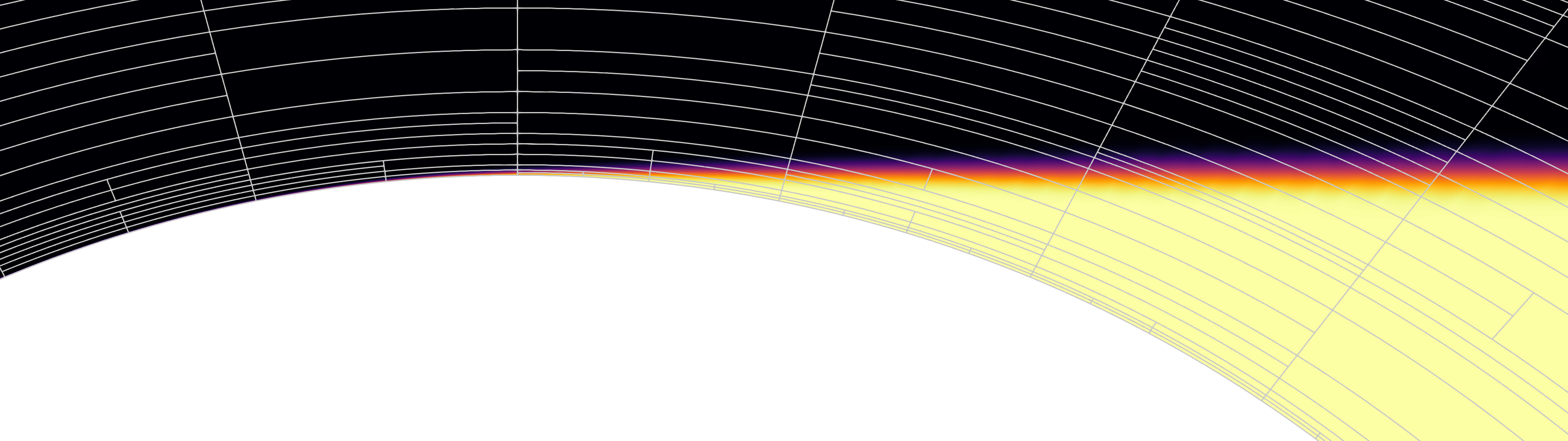}};%
      \node[shift={(0.25,-0.25)},draw=myred,anchor=north west,line width=3pt,inner sep=0, outer sep=0] at (img.east)
      {\includegraphics[width=.5\textwidth]{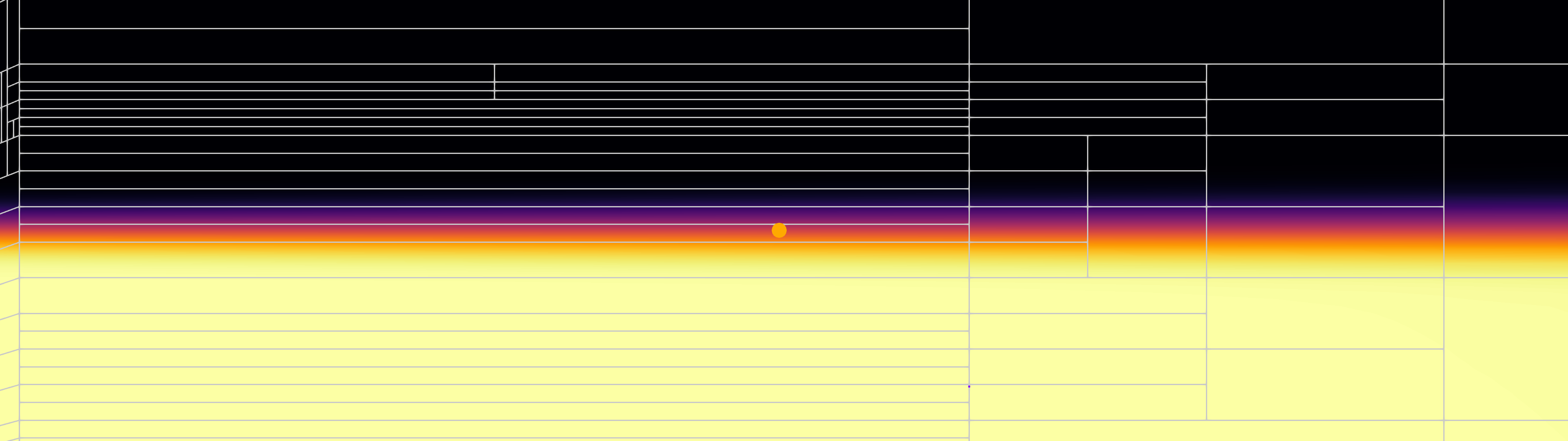}};%
      \coordinate (pos1) at
      ($(img.south west) + 0.63*(img.south east) + 0.583*(img.north west)$);
      \draw[myred,thick] (pos1) +(-20pt,-6pt) rectangle +(22pt,6pt) ;
      \coordinate (pos2) at
      ($(img.south west) + 0.29*(img.south east) + 0.5763*(img.north west)$);
      \draw[mygreen,thick] (pos2)  +(-12pt,-4pt) rectangle +(12pt,4pt);%
    \end{tikzpicture}}
  \caption{The solution to the Hemker
    problem with $\varepsilon = 10^{-4}$ in the final DWR loop. One can clearly see the adaptive mesh refinement at
    $\boldsymbol{x}_{c}$ and upstream of the point due to the goal~\eqref{eq:point-goal}.}\label{fig:hemker-pv-1e-4-solution}
  \vspace{4pt}
  \includegraphics[width=\textwidth]{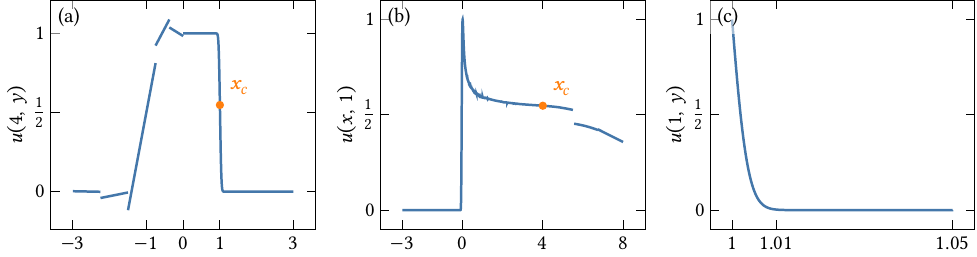}
  \caption{\label{fig:hemker-pv-1e-4}Cut lines of the solution to the Hemker
    problem with $\varepsilon = 10^{-4}$. In (a) a
    cut through the interior layers is plotted. In (b), the solution
    within the upper interior layer is plotted.
    In both cases we mark the control point which we use for the
    goal oriented error control. In (c) a cut through the boundary
    layer is plotted.}
  \vspace{4pt}
  \scalebox{0.97}{
  \begin{tikzpicture}
    \node[anchor=south west,inner sep=0, outer sep=0] (img) at (0,0)
    {\includegraphics[width=.5\textwidth]{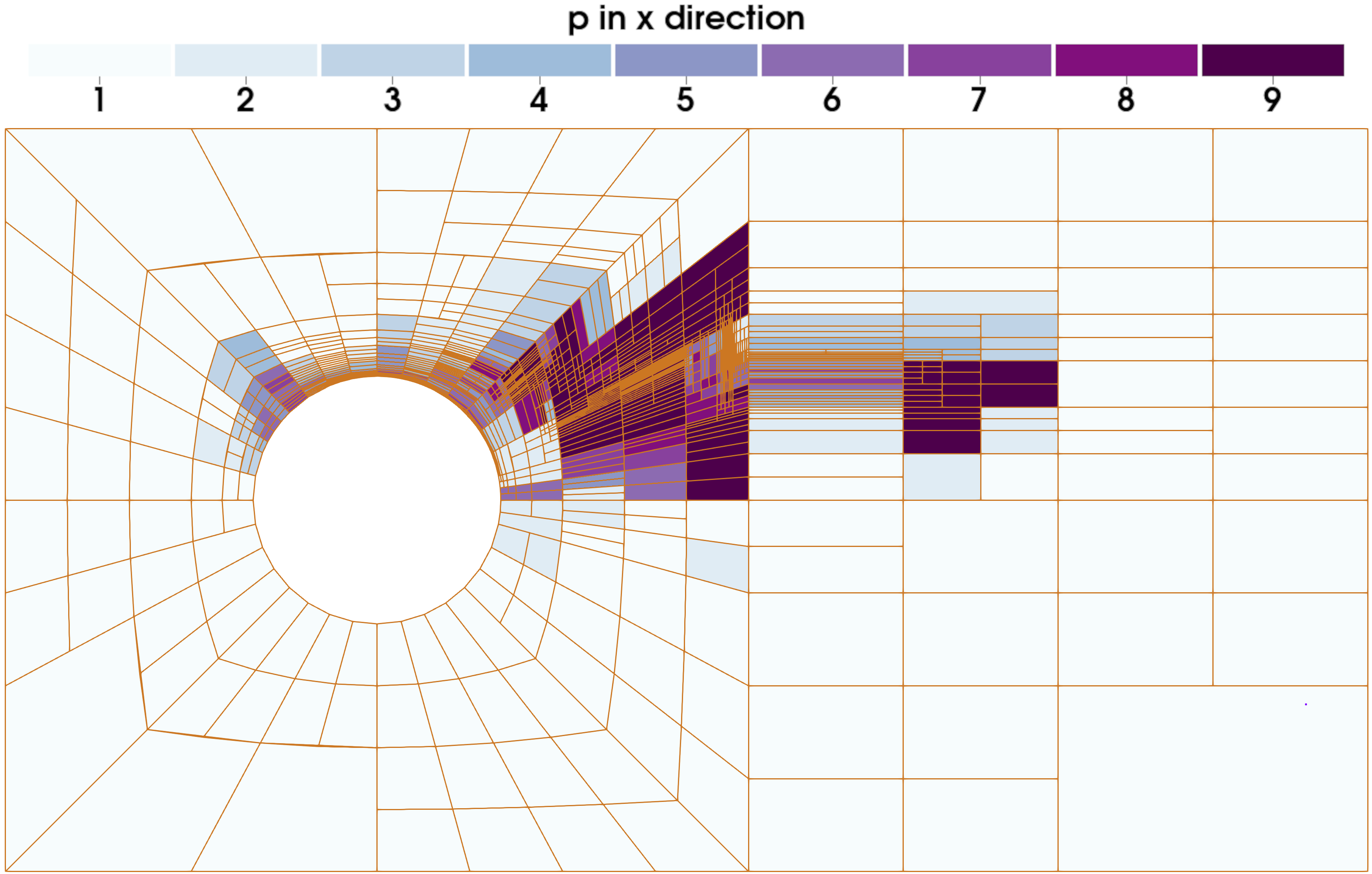}};%
    \node[shift={(0.0,-0.2)},draw=mygreen,anchor=north,line width=3pt,inner sep=0, outer sep=0] at (img.south)
    {\includegraphics[width=.49\textwidth]{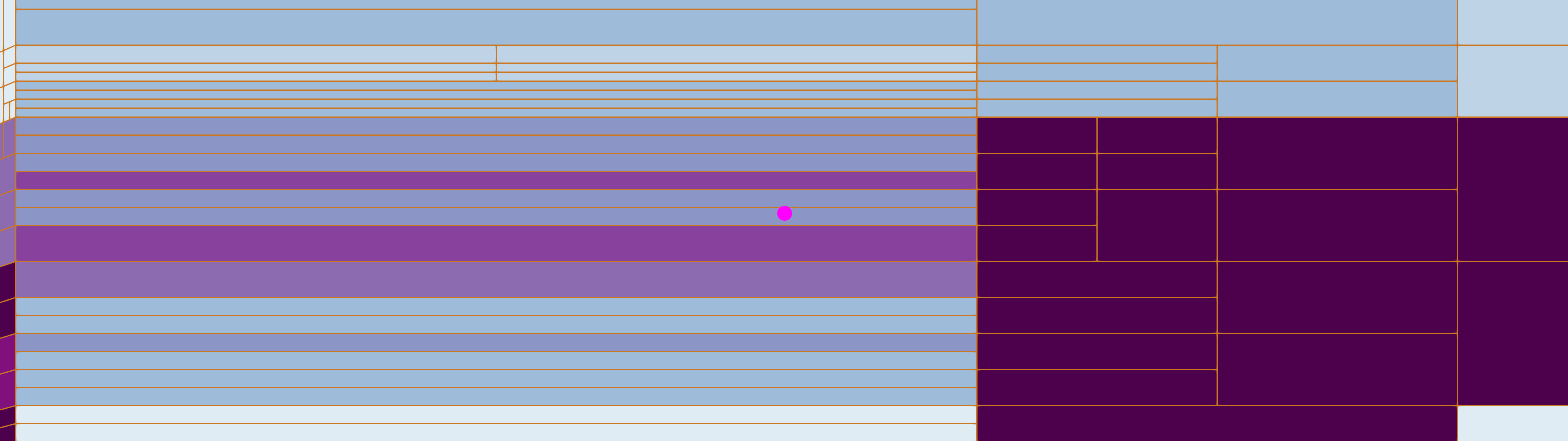}};%
    \coordinate (pos1) at
    ($(img.south west) + 0.63*(img.south east) + 0.5717*(img.north west)$);
    \draw[mygreen,thick] (pos1) +(-20pt,-6pt) rectangle +(22pt,6pt) ;
  \end{tikzpicture}}\hfill
\scalebox{0.97}{
  \begin{tikzpicture}
    \node[anchor=south west,inner sep=0, outer sep=0] (img) at (0,0)
    {\includegraphics[width=.5\textwidth]{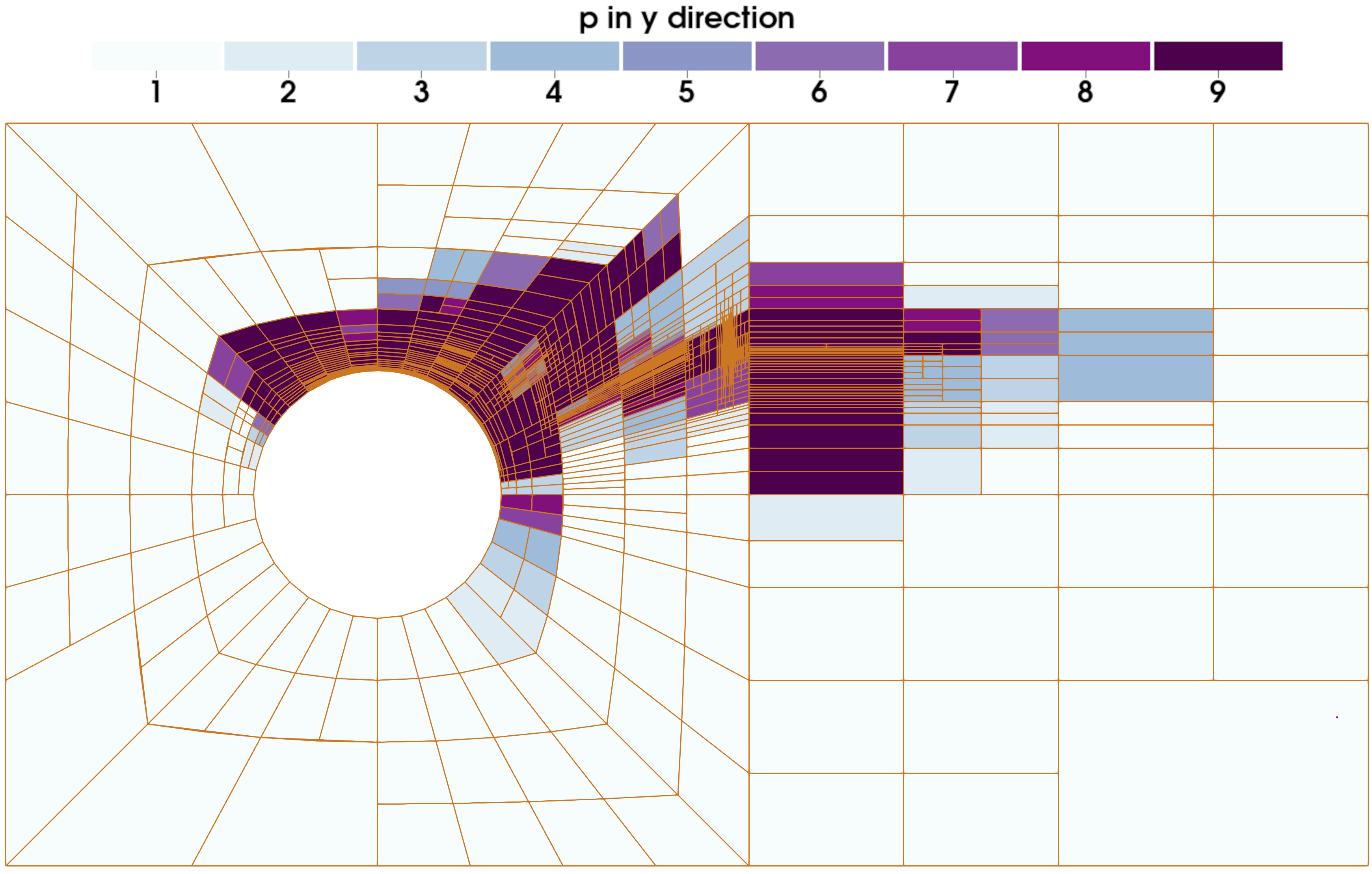}};%
    \node[shift={(0.0,-0.2)},draw=mygreen,anchor=north,line width=3pt,inner sep=0, outer sep=0] at (img.south)
    {\includegraphics[width=.49\textwidth]{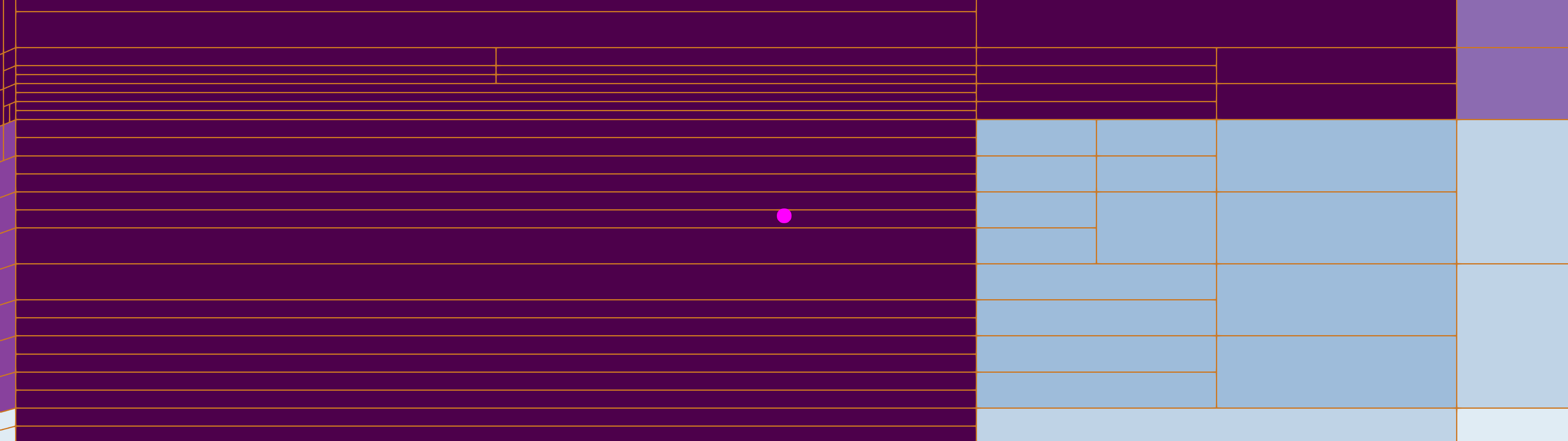}};%
    \coordinate (pos1) at
    ($(img.south west) + 0.63*(img.south east) + 0.5717*(img.north west)$);
    \draw[mygreen,thick] (pos1) +(-20pt,-6pt) rectangle +(22pt,6pt) ;
  \end{tikzpicture}}
  \caption{\label{fig:hemker-pv-1e-4-point-mesh}Anisotropic $hp$-mesh with
    polynomial degrees in $x$-direction (left) and $y$-direction (right) after
    $17$ refinement steps for the Hemker problem with $\varepsilon = 10^{-4}$.
    Within the mesh we observe high anisotropies in the mesh size and polynomial
    degree.}
\end{figure}

Tables~\ref{tab:hemker-pv-1e-4} and~\ref{tab:hemker-pv-1e-6} summarize the
adaptive refinement process for the Hemker problem with $\varepsilon=10^{-4}$
and $\varepsilon=10^{-6}$, respectively. For the less convection-dominated case
$\varepsilon=10^{-4}$ the final combined error $\eta_{\tau h}^{\textup{a}}$ is smaller for comparable computational effort. The more convection-dominated case $\varepsilon=10^{-6}$ requires a significantly larger number of cells and degrees of freedom to achieve comparable error reduction. This is due to the
need to resolve significantly thinner boundary and interior layers. Moreover, the indicators start higher and decay more slowly, underscoring the difficulty of the Hemker problem in this regime.
The maximal anisotropy
increases throughout the refinement process, reaching values up to $427$ and
$107$, indicating a strong preference for anisotropic refinement in space. The
space-time error estimate $\eta_{\tau h}^{\textup{a}}$ consistently decreases, demonstrating
the effectiveness of the goal-oriented adaptive strategy. These results
highlight the potential of the anisotropic $hp$-adaptive algorithm to
effectively address practically relevant, convection-dominated problems.

The left part of Fig.~\ref{fig:hemker-pv-1e-4-solution} shows refinement
concentrated around the obstacle and upstream of the goal point
$\boldsymbol{x}_c$. The zoomed panels highlight the isoparametric curved cells
close to the obstacle and the solution close to the goal point. The cut-line plots in
Fig.~\ref{fig:hemker-pv-1e-4} indicate that the solution remains free of
spurious oscillations despite the convection-dominated setting. The steep
gradients at the obstacle boundary and in the interior layer upstream of the
goal point are well resolved, suggesting that the adaptive procedure
successfully allocates refinement to regions with the highest influence on the
goal functional.
The width of the interior layer,
\begin{equation}\label{eq:ylayer}
  y_{\text{layer}}=y_1-y_0\,,
\end{equation}
is defined to be the length of the interval $[y_0,\,y_1]$ in which the
solution falls from $u(4, y_0) = 0.9$ to $u(4, y_1) = 0.1$. John et
al.~\cite{augustin_assessment_2011} provide a reference value of
$y_{\text{layer}}=0.0723$, which we achieve exactly in the 17th DWR loop at only
$\num[scientific-notation=false,round-precision=0]{62682}$ spatial degrees of freedom (cf.\ Tab.~\ref{tab:hemker-pv-1e-4}).

The meshes in Fig.~\ref{fig:hemker-pv-1e-4-point-mesh} show
high anisotropy: along the convection direction the polynomial degree is
predominantly increased, whereas in the transverse direction refinement is
primarily performed in $h$. This directional refinement pattern is in line with
expectations for convection-dominated problems, where $p$-refinement along the
flow direction and $h$-refinement across thin layers are expected to be most
efficient. The observed behavior indicates that the adaptive $hp$-strategy
concentrates refinement in regions most relevant for the goal functional while
selecting refinement types appropriate for the local solution structure.

\FloatBarrier
\subsection{Fichera Corner} The Fichera corner problem introduces boundary
layers within the three-dimensional domain $\Omega$ given by $\Omega=
(-1,1)^3\setminus [0,\,1]^3$,
which has a re-entrant corner at the origin \((0,\, 0,\, 0)\). The problem is defined
by an initial concentration
$u(\symbf{x},\,0)=\prod_{d=1}^{3} \left[ \frac{1}{4} \left(1 +
    \tanh\left(\frac{\symbf{x}_d + 1}{\varepsilon}\right)\right) \left(1 -
    \tanh\left(\frac{\symbf{x}_d - 1}{\varepsilon}\right)\right) \right]$, which
is advected by the convection field $\symbf{b}= (1,\, 1,\, 1)$ towards the
singularity at the origin over an interval $I=(0,1]$. This presents significant
numerical challenges due to singularities and steep gradients near the boundary.
The diffusion is set to $\varepsilon=\num[round-precision=1]{5.e-4}$ and the reaction coefficient
is given by $\alpha=0$. Despite the moderate diffusion parameter, the
elliptic nature of the corner singularity exacerbates the problem's complexity,
rendering it a challenging benchmark for the numerical methods. Homogeneous
Dirichlet boundary conditions are imposed on the three faces adjacent to the
origin, while homogeneous Neumann conditions are enforced on all other boundary
faces (cf.~Fig.~\ref{fig:geo-fichera}). The aim is to control the error within a
control point $\boldsymbol x_{c}=(-10^{-4},\,0.25,\,-10^{-4})$ in the boundary
layer near an edge. To this end, we use the goal functional
\begin{equation}\label{eq:fichera-point-goal}
  J(u)=u(\boldsymbol x_{c},\,T)\,,
\end{equation}
which is defined analogous to~\eqref{eq:point-goal}. In space and time, the
refinement fraction is set to
\(\theta_{\text{space}}^{\text{ref}} = \frac{1}{7}\) and
$\theta_{\text{time}}^{\text{ref}} = \frac{1}{8}$. The coarsening fraction is
set to
\(\theta_{\text{space}}^{\text{co}} = \theta_{\text{time}}^{\text{co}} =
\frac{1}{25}\). We use polynomial degrees \(1 \leq p \leq 5\) and
\(0 \leq k \leq 5\) in space and time, respectively. The initial space-time
triangulation consists of a single cell in time and a once uniformly refined
spatial coarse mesh (cf. Fig.~\ref{fig:geo-fichera}).

  \usetikzlibrary {perspective}
  \newcommand\simplecuboid[9]{%
    \fill[#7,draw=black] (tpp cs:x=#1,y=#2,z=#6)
    -- (tpp cs:x=#1,y=#5,z=#6)
    -- (tpp cs:x=#4,y=#5,z=#6)
    -- (tpp cs:x=#4,y=#2,z=#6) -- cycle;
    \fill[#8,draw=black]  (tpp cs:x=#1,y=#2,z=#3)
    -- (tpp cs:x=#1,y=#2,z=#6)
    -- (tpp cs:x=#1,y=#5,z=#6)
    -- (tpp cs:x=#1,y=#5,z=#3) -- cycle;
    \fill[#9,draw=black] (tpp cs:x=#1,y=#2,z=#3)
    -- (tpp cs:x=#1,y=#2,z=#6)
    -- (tpp cs:x=#4,y=#2,z=#6)
    -- (tpp cs:x=#4,y=#2,z=#3) -- cycle;}
  \begin{figure}[!htb]
    \centering
    \begin{tikzpicture}[scale=1.4,
      isometric view,very thick,
      perspective={
        p = {(16,0,0)},
        q = {(0,16,0)},
        r = {(0,0,-16)}}]
      \draw[-stealth] (0,0,0) -- (-1,0,0) node[pos=1.1]{$z$};
      \draw[-stealth] (0,0,0) -- (0,-1,0) node[pos=1.1]{$x$};
      \draw[-stealth] (0,0,0) -- (0,0,1) node[pos=1.1]{$y$};
    \end{tikzpicture}
  \begin{tikzpicture}[scale=1.4,
    isometric view,very thick,
    perspective={
      p = {(16,0,0)},
      q = {(0,16,0)},
      r = {(0,0,-16)}}]
    \simplecuboid{0}{0}{0}{1}{1}{1}{myblue}{myblue}{myblue}
    \simplecuboid{-1}{0}{-1}{0}{1}{0}{myblue}{myblue}{myblue}
    \simplecuboid{0}{-1}{-1}{1}{0}{0}{myblue}{myblue}{myblue}
    \simplecuboid{-1}{-1}{-1}{0}{0}{0}{mygreen}{myblue}{myblue}
    \simplecuboid{0}{-1}{0}{1}{0}{1}{myblue}{mygreen}{myblue}
    \simplecuboid{-1}{0}{0}{0}{1}{1}{myblue}{myblue}{mygreen}
  \end{tikzpicture}
  \hspace*{1em}
  \rule{1.5pt}{4.5cm}
  \hspace*{1em}
  \begin{tikzpicture}[scale=1.4,
    isometric view,very thick,
    perspective={
      p = {(16,0,0)},
      q = {(0,16,0)},
      r = {(0,0,-16)}}]
    \simplecuboid{0}{0}{0}{1}{1}{1}{myblue}{myblue}{myblue}
    \simplecuboid{-1}{0}{-1}{0}{1}{0}{myblue}{myblue}{myblue}
    \simplecuboid{0}{-1}{-1}{1}{0}{0}{myblue}{myblue}{myblue}
    \simplecuboid{-1}{-1}{-1}{0}{0}{0}{myblue}{myblue}{myblue}
    \simplecuboid{0}{-1}{0}{1}{0}{1}{myblue}{myblue}{myblue}
    \simplecuboid{-1}{0}{0}{0}{1}{1}{myblue}{myblue}{myblue}
    \simplecuboid{-1}{-1}{0}{0}{0}{1}{myblue}{myblue}{myblue}
  \end{tikzpicture}
  \begin{tikzpicture}[scale=1.4,
    isometric view,very thick,
    perspective={
      p = {(16,0,0)},
      q = {(0,16,0)},
      r = {(0,0,-16)}}]
    \draw[-stealth] (0,0,0) -- (1,0,0) node[pos=1.1]{$x$};
    \draw[-stealth] (0,0,0) -- (0,1,0) node[pos=1.1]{$z$};
    \draw[-stealth] (0,0,0) -- (0,0,-1) node[pos=1.1]{$y$};
  \end{tikzpicture}
  \caption{Geometry and coarse mesh of the Fichera cube. We
    depict views towards the origin \((0,\, 0,\, 0)\) in the direction
    \((-1,\, -1,\, -1)\) (left) and in the opposite direction \((1,\, 1,\, 1)\) (right). Green coloring
    corresponds to homogeneous Dirichlet BCs, while blue indicates homogeneous Neumann BCs.}\label{fig:geo-fichera}
  \includegraphics[width=.48\textwidth]{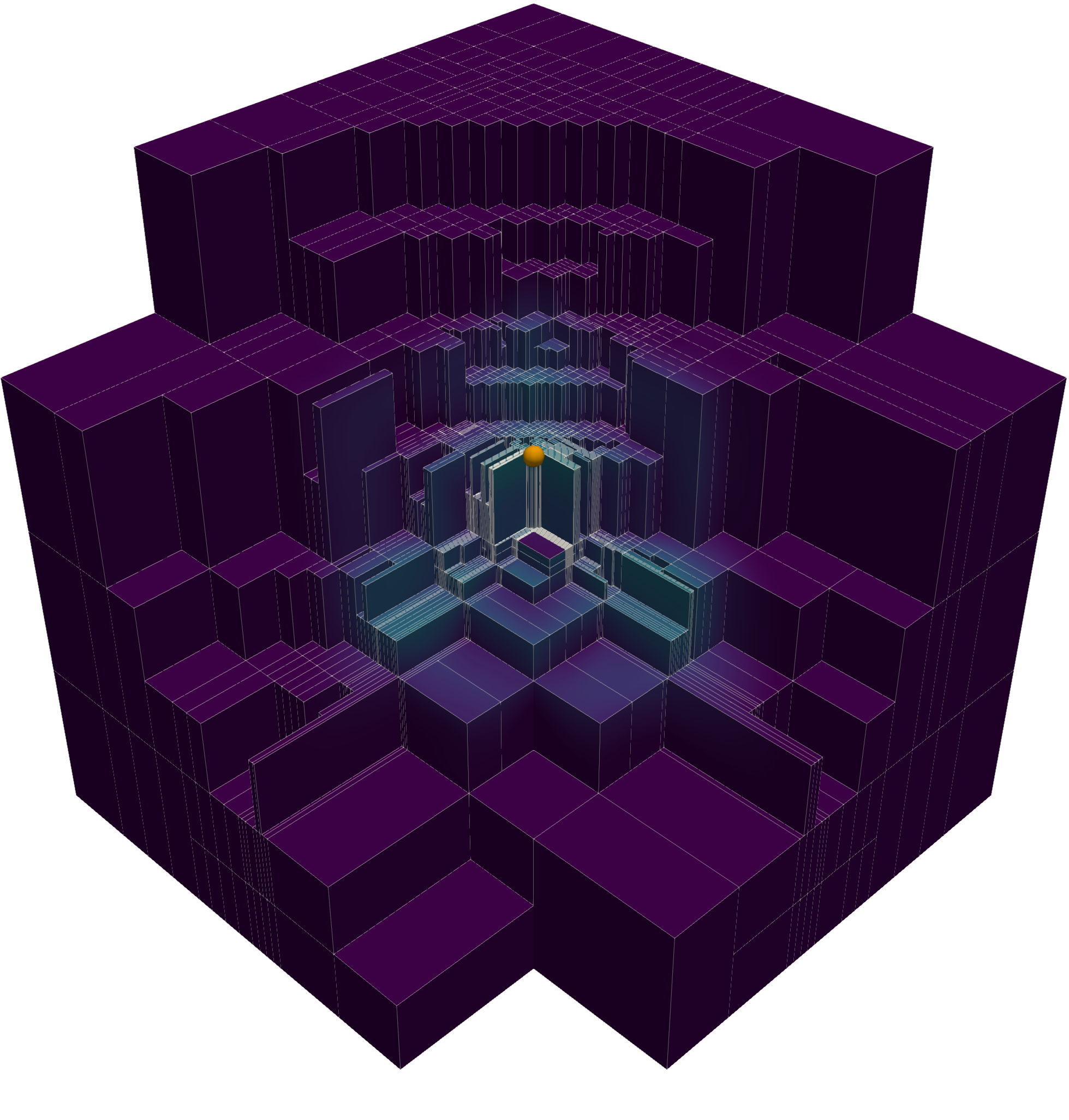}\hfill
  \includegraphics[width=.48\textwidth]{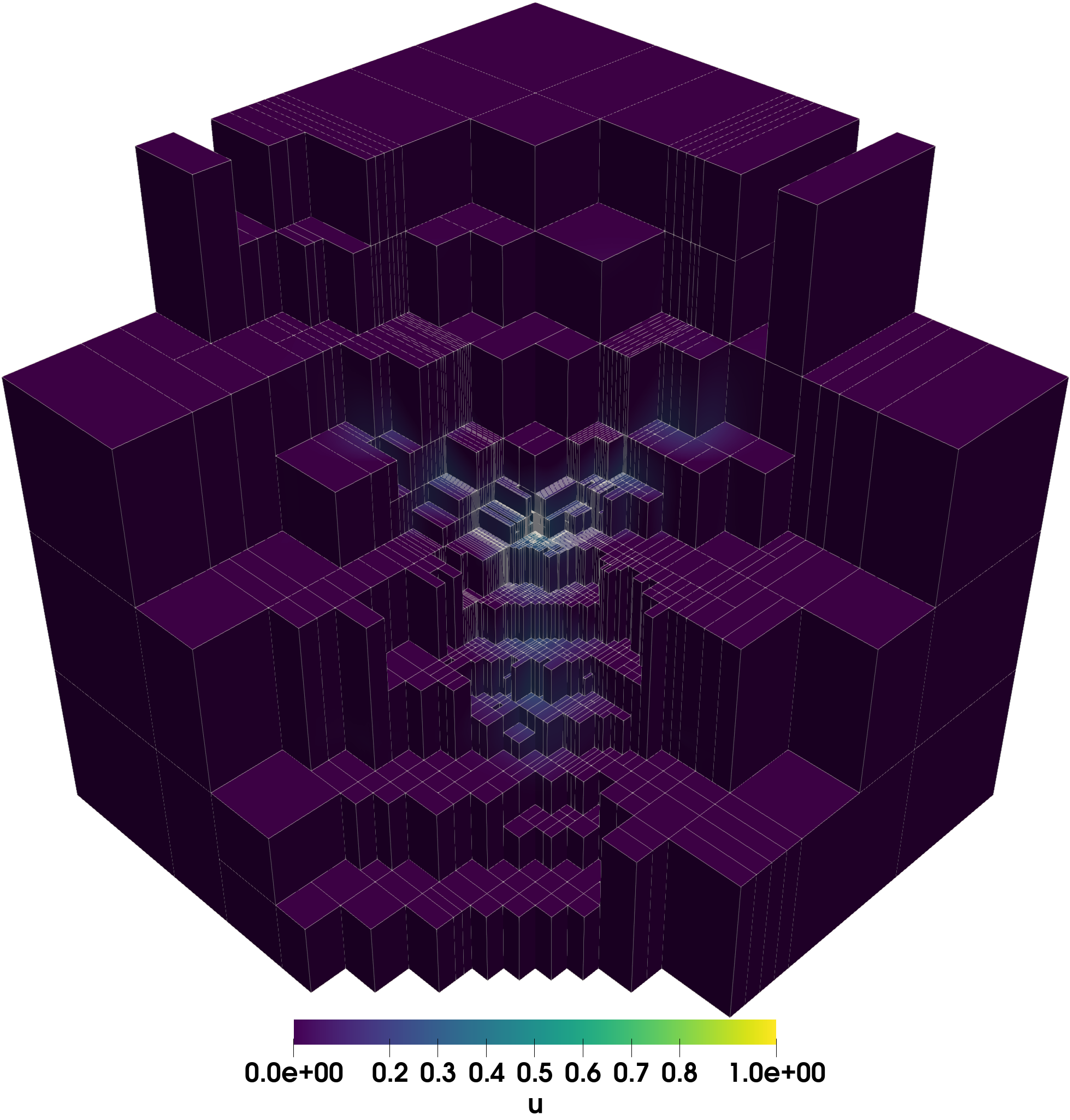}
  \caption{Solution and adapted mesh of the Fichera cube example. The depicted views are the same as above in Fig.~\ref{fig:geo-fichera}. The point $\boldsymbol x_{c}$ is marked in orange}\label{fig:soln-fichera}
  \end{figure}
\begin{figure}

\end{figure}
\begin{table}
  \caption{\label{tab:fichera}Number of DoFs, error estimators, error, and effectivity indices for the Fichera corner problem with point value control~\eqref{eq:fichera-point-goal} and diffusion coefficient $\varepsilon = \num[round-precision=1]{5.0e-4}$.}
  \begin{center}\scriptsize
    \begin{tabular}{S[scientific-notation=false,round-precision=0,table-format=2]
    S[scientific-notation=false,round-precision=0,table-format=3]
    S[scientific-notation=false,round-precision=0,table-format=5]
    S[scientific-notation=false,round-precision=0,table-format=5]
    S[scientific-notation=false,round-precision=0,table-format=1]
    S[table-format=1.1e2]%
    S[table-format=1.1e2]%
    S[table-format=1.1e2]
    S[table-format=1.1e2]
    S[table-format=1.1e2]
    S[table-format=1.1e2]
    S[table-format=1.1e2]
    S[table-format=1.1e2]
    S[table-format=1.1e2]}
      \toprule
      \mc{$\ell$} & \mc{$|\mathcal T_h|$} & \mc{$N_{\boldsymbol x}^{\mathrm{lo}}$} & \mc{$N_{\boldsymbol x}^{\mathrm{ho}}$} & \mc{$N_t$} & \mc{\textup{work}} & \mc{$\rho_{\max}$} & \mc{$\eta_{h,1}$} & \mc{$\eta_{h,2}$} & \mc{$\eta_{h,3}$} & \mc{$\eta_h^{\textup{a}}$} & \mc{$\eta_\tau$} & \mc{$\eta_{\tau h}^{\textup{a}}$} \\
\midrule
0  &  56    &  448     & 1512  & 1  & 896      & 1   & 2.579e-3 & -1.528e-3 &  2.579e-3 &  3.630e-3 & -9.782e-2 & -9.419e-2 \\
1  &  56    &  556     & 1746  & 2  & 11120    & 1   & 2.773e-3 &  2.588e-3 &  2.773e-3 &  8.134e-3 & -4.726e-2 & -3.912e-2 \\
2  &  56    &  695     & 2027  & 2  & 13900    & 1   & 3.743e-3 &  1.793e-3 &  3.743e-3 &  9.279e-3 & -5.152e-2 & -4.224e-2 \\
3  &  64    &  972     & 2670  & 3  & 37908    & 2   & 1.902e-3 &  1.652e-3 &  2.260e-3 &  5.814e-3 & -2.025e-3 &  3.789e-3 \\
4  &  66    & 1207     & 3150  & 3  & 47073    & 2   & 1.947e-3 &  5.739e-4 &  2.713e-3 &  5.234e-3 & -1.987e-3 &  3.247e-3 \\
5  &  73    & 1638     & 4058  & 3  & 63882    & 2   & 1.646e-3 &  9.078e-4 &  2.307e-3 &  4.861e-3 & -1.161e-4 &  4.745e-3 \\
6  & 134    & 3731     & 8760  & 3  & 145509   & 4   & -2.181e-4 &  2.200e-4 & -3.674e-4 & -3.655e-4 & -3.912e-3 & -4.278e-3 \\
7  & 277    & 11121    & 23981 & 3  & 433719   & 8   & 4.388e-5 & -1.935e-5 & -1.448e-4 & -1.120e-4 & -5.827e-3 & -5.947e-3 \\
8  & 503    & 21835    & 46560 & 3  & 851565   & 16  & -7.430e-6 & -1.371e-6 & -2.331e-4 & -2.420e-4 & -5.941e-3 & -6.183e-3 \\
9  & 812    & 42199    & 86481 & 3  & 1645761  & 32  & -5.367e-5 & -1.062e-4 & -1.782e-4 & -3.381e-4 & -4.668e-3 & -5.006e-3 \\
10 & 1438   & 87733    & 173442 & 3 & 3421587  & 64  & 2.614e-5 & 3.753e-5 & -9.807e-5 & -3.440e-5 & -2.229e-3 & -2.263e-3 \\
11 & 2873   & 214811   & 407097 & 3 & 8377629  & 128 & 3.652e-6 & 1.980e-4 & -1.925e-5 & 1.817e-4  & -5.215e-4 & -3.399e-4 \\
12 & 6021   & 538839   & 988246 & 3 & 21014721 & 256 & 3.828e-6 & 1.118e-4 & -3.307e-6 & 1.123e-4  & 1.940e-3  & 2.052e-3 \\
13 & 11573  & 1172210  & 2101439 & 3 & 45716190 & 512 & 2.725e-6 & 7.560e-5 & -2.407e-6 & 7.592e-5  & 3.368e-4  & 4.127e-4 \\
\bottomrule
\end{tabular}
  \end{center}
\end{table}
  \begin{figure}[htb]
    \centering
    \begin{tabular}{ccc}
      \includegraphics[width=0.3\textwidth]{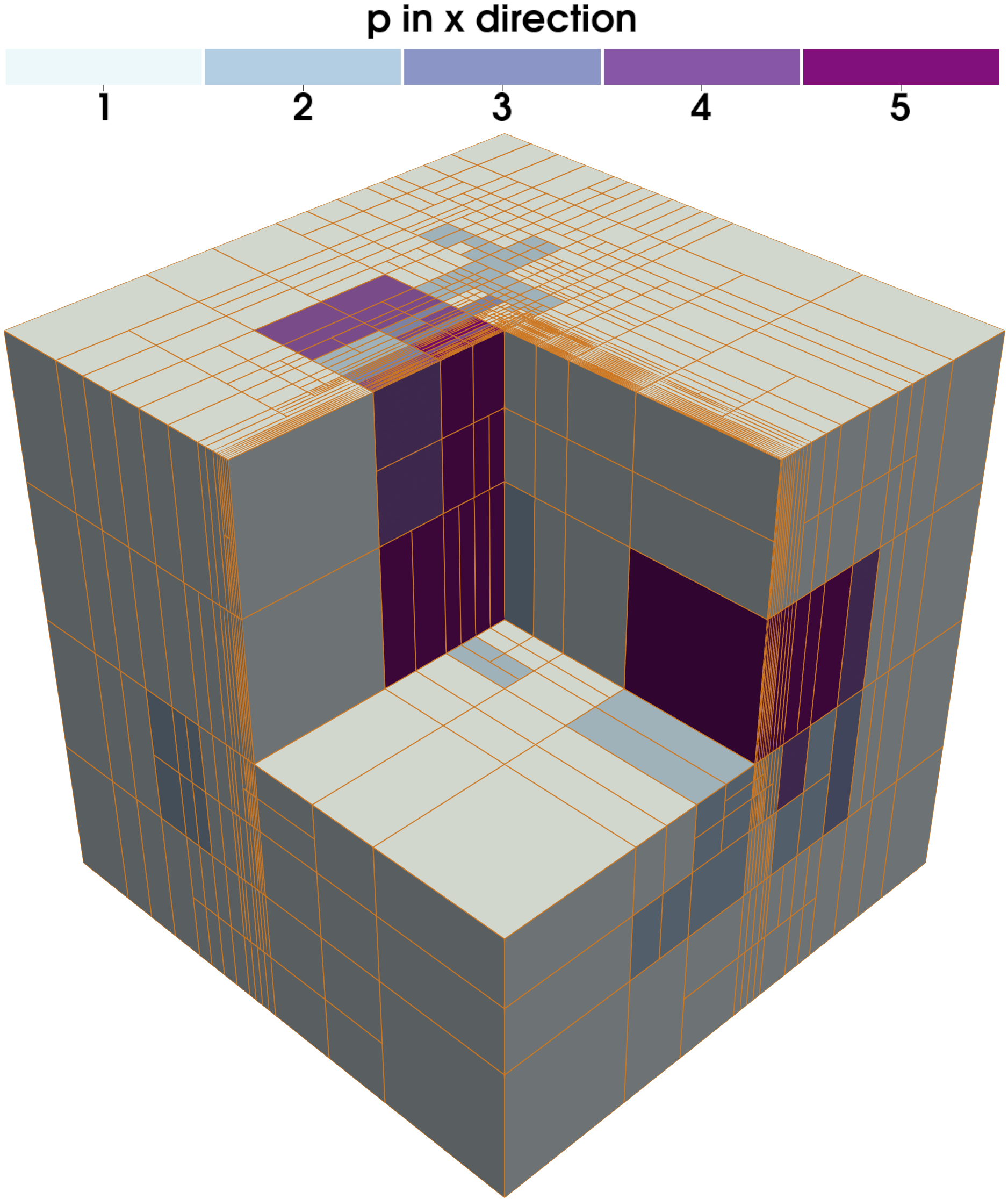} & \includegraphics[width=0.3\textwidth]{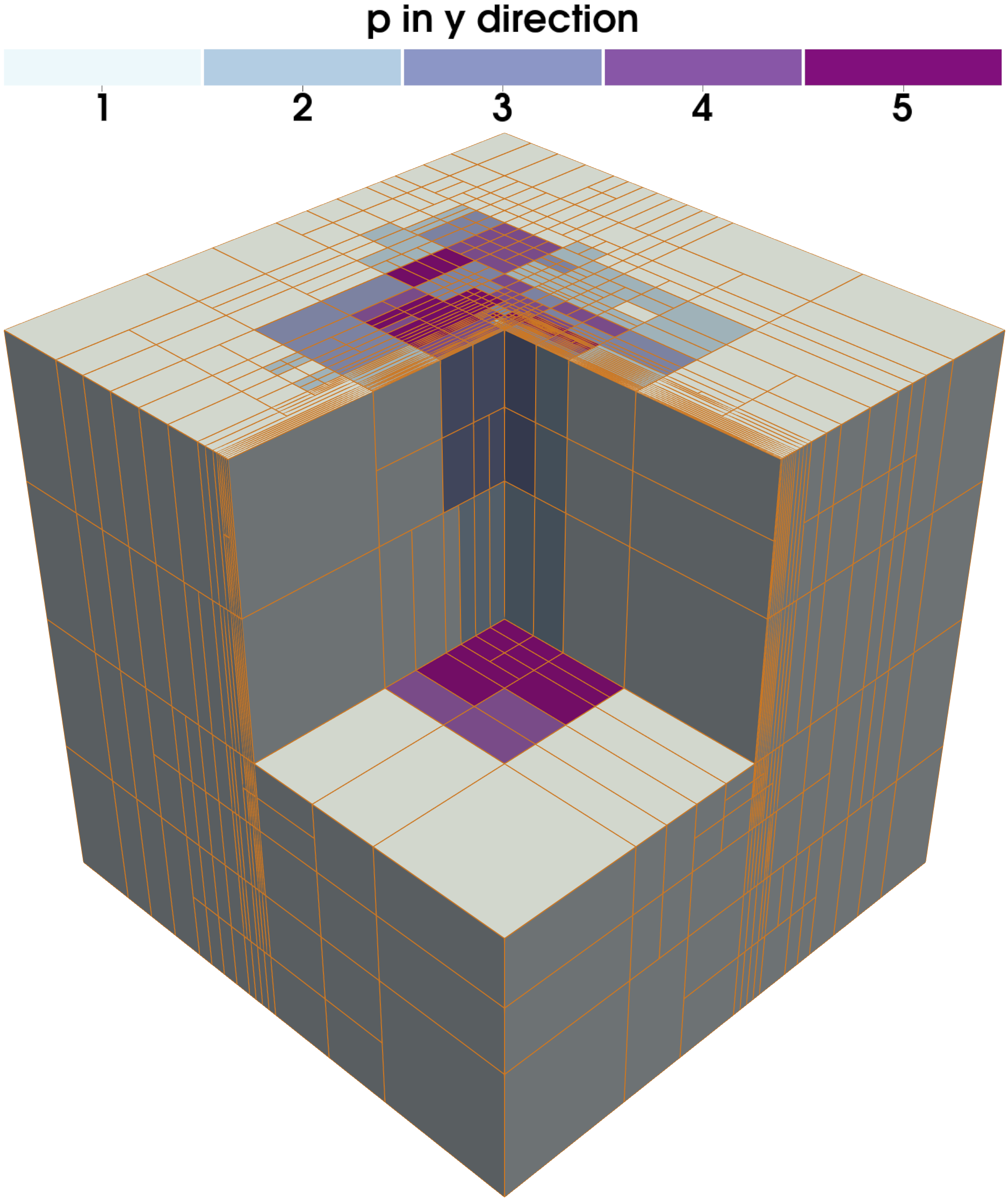} & \includegraphics[width=0.3\textwidth]{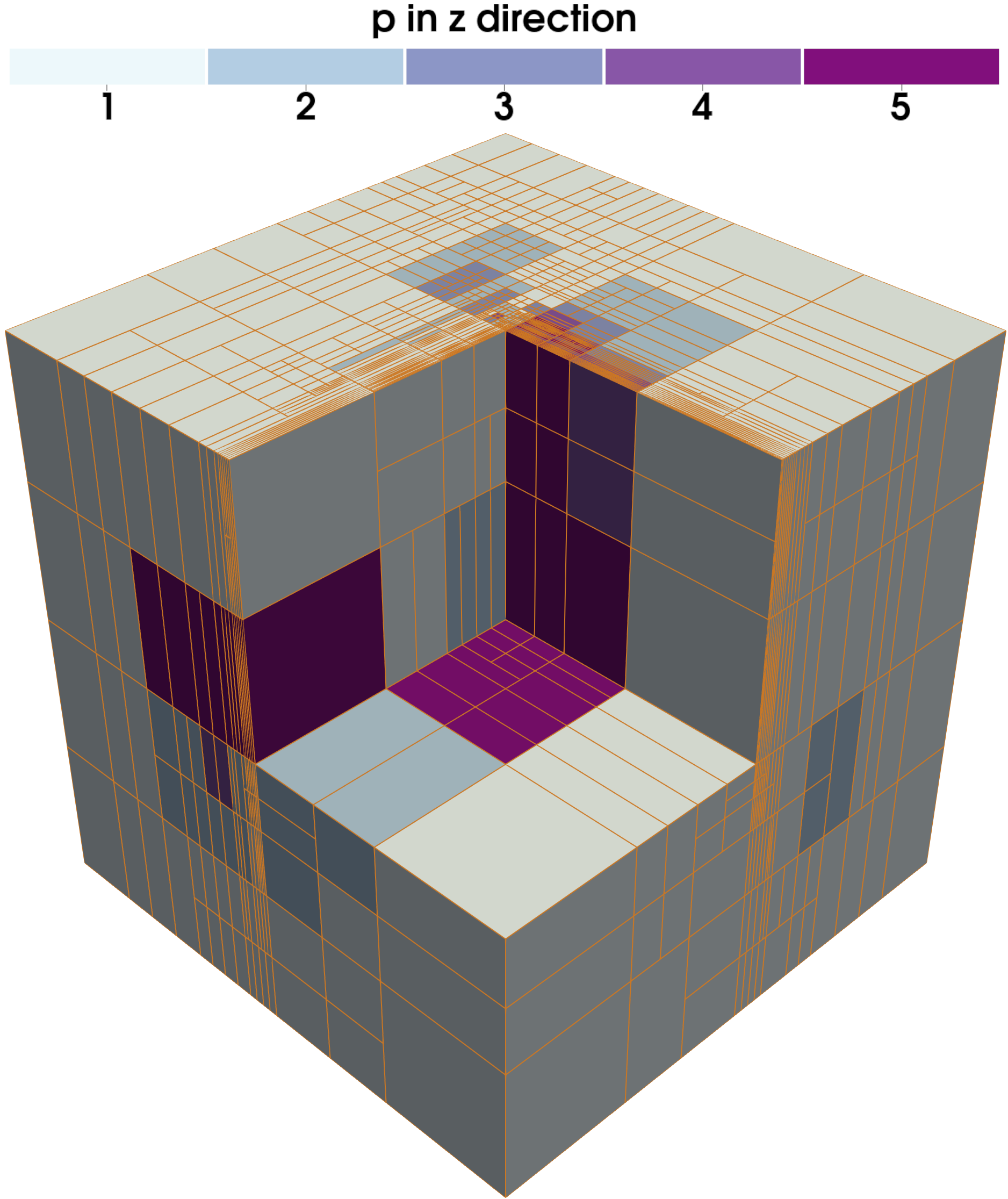} \\

      \includegraphics[width=0.3\textwidth]{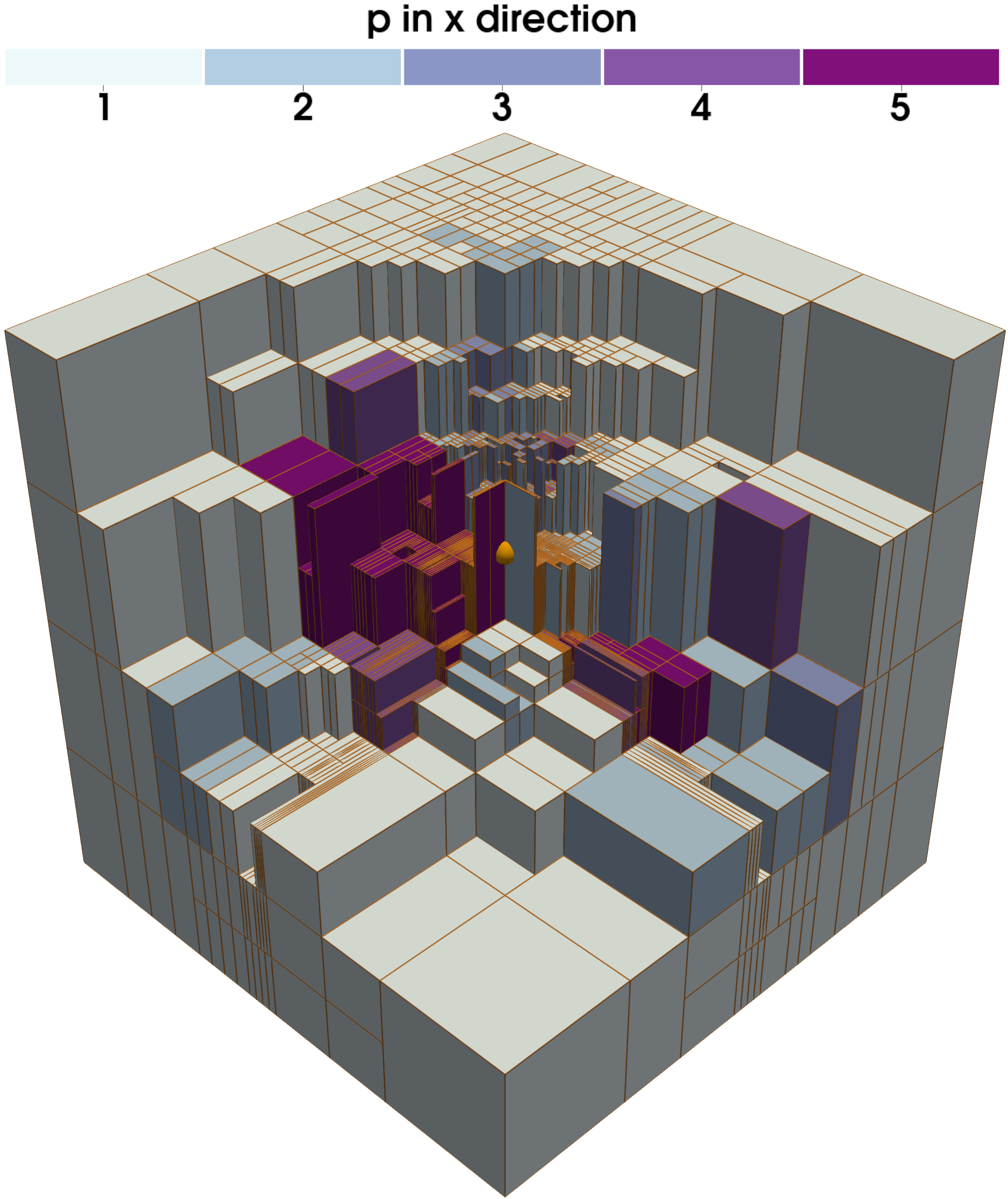} & \includegraphics[width=0.3\textwidth]{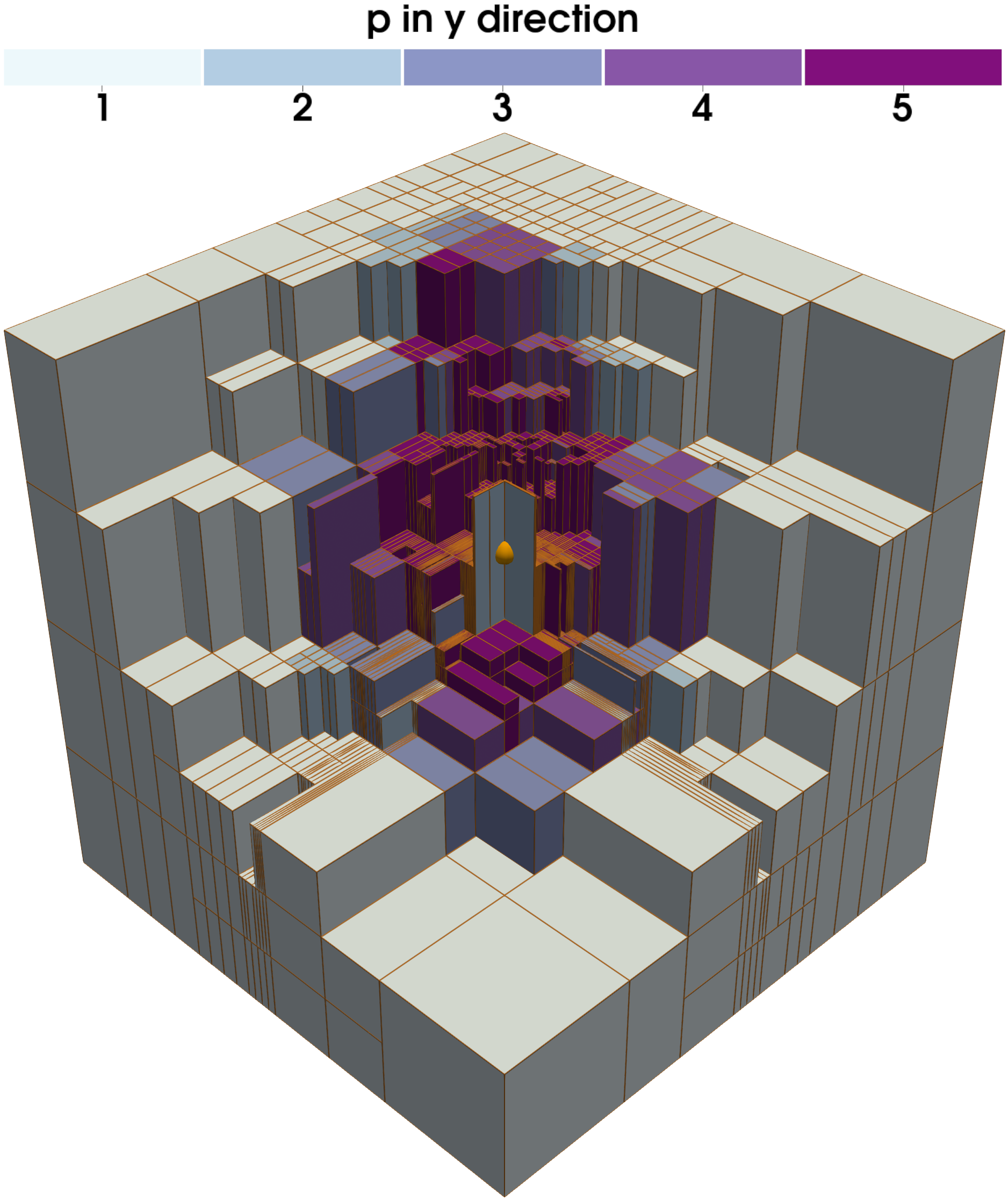} & \includegraphics[width=0.3\textwidth]{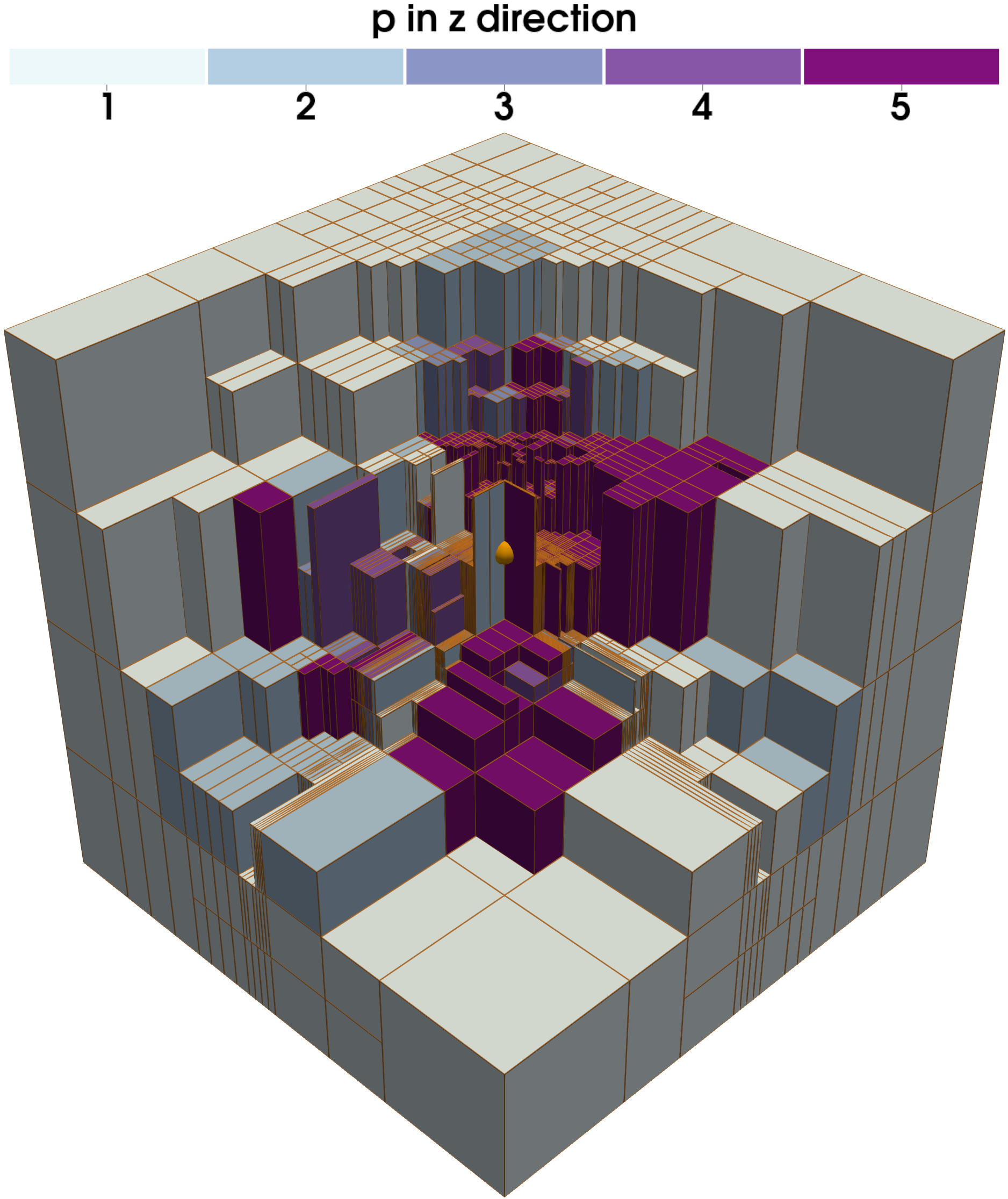} \\

    \end{tabular}
    \caption{\label{fig:poly-grid}Polynomial degree distributions in $x$-, $y$-, and $z$-direction (columns), with two zoomed-in views for each direction (rows).}
  \end{figure}
  Fig.~\ref{fig:soln-fichera} shows the solutions after 14 DWR loops at final
  time. We observe that the initial concentration has been transported towards
  the corner and edge singularities. Tab.~\ref{tab:fichera} summarizes the
  adaptive refinement for the Fichera corner problem. As the DWR algorithm
  progresses, the number of cells grows, reflecting the need to resolve the
  singularities with $h$-refinement. The maximal anisotropy $\rho_{\max}$ increases
  up to $512$, showing that the algorithm strongly exploits directional
  refinement. The combined space-time error estimate $\eta_{\tau h}^{\textup{a}}$ consistently
  decreases, indicating that the goal-oriented $hp$ adaptive strategy remains
  effective despite the reduced regularity. These results demonstrate that the
  anisotropic $hp$-adaptivity can handle geometric singularities, where purely
  isotropic refinement would be far less efficient.

  Fig.~\ref{fig:poly-grid} shows the polynomial degree distributions in $x$-,
  $y$-, and $z$-direction (columns) together with clipped views (rows). The
  plots demonstrate that the adaptive strategy is impacted by the edge and
  corner singularities, where $h$-refinement is employed. This shows
  that the anisotropic $hp$-adaptive approach is capable of efficiently
  capturing the reduced regularity close to edges and corners.

  \begin{figure}
    \includegraphics[width=0.65\textwidth]{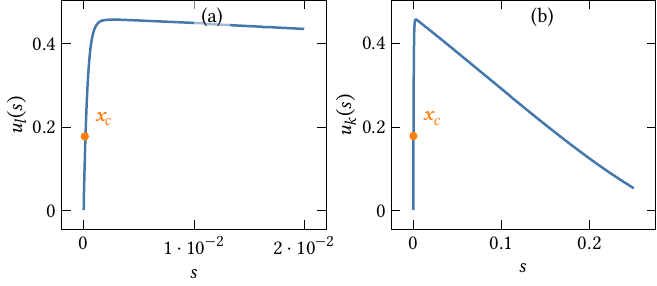}\hfill
    \includegraphics[width=0.33\textwidth]{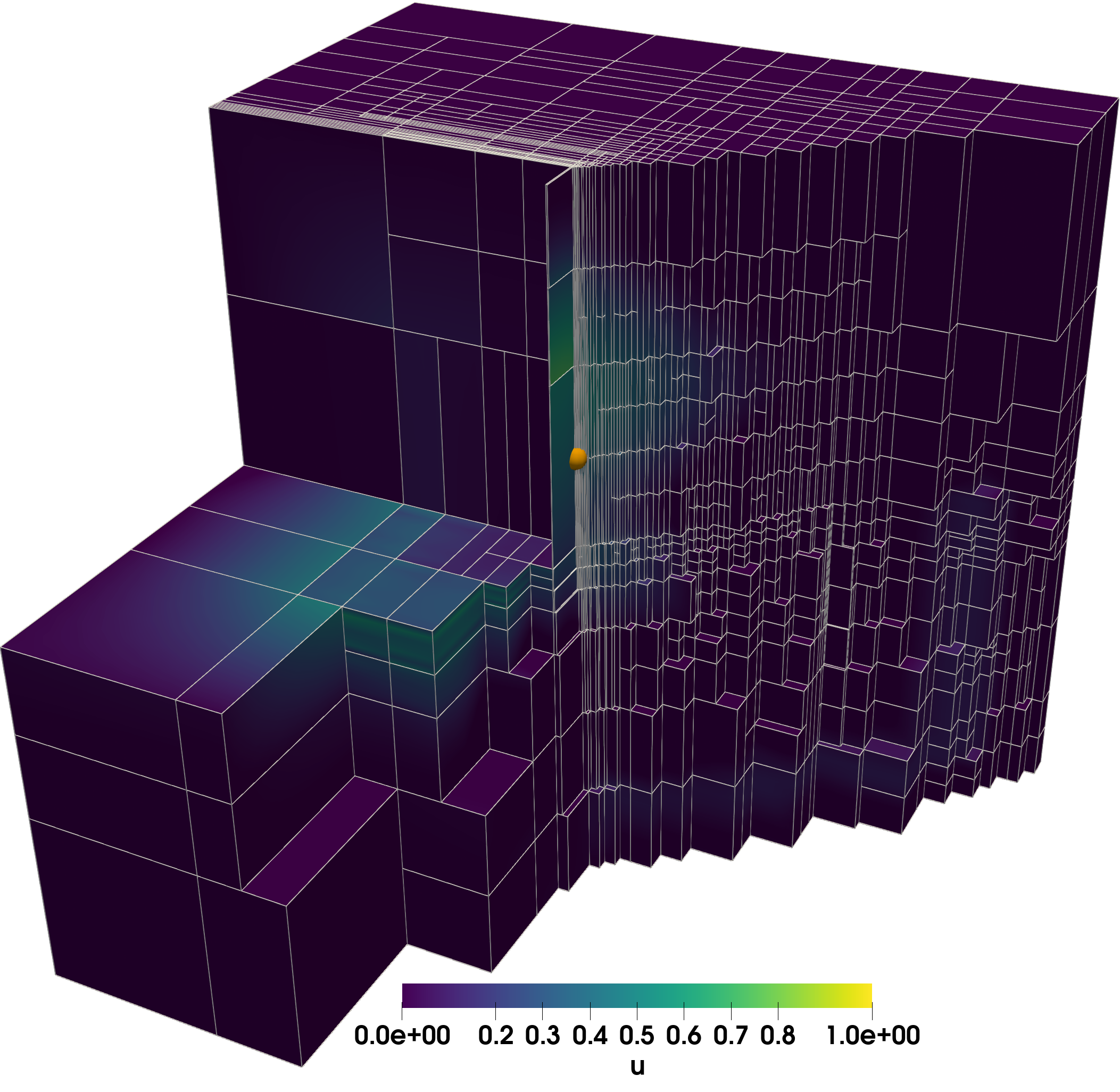}
    \caption{\label{fig:fichera-cut}Cut lines of the solution to the Fichera corner
      problem. In (a) a
      cut orthogonal to the boundary is plotted. In (b) a cut through $\boldsymbol x_{c}$ in downstream direction of the convective field $\boldsymbol b$ is plotted (cf~\eqref{eq:cutlines}).
      In both cases the control point which we use for the
      goal oriented error control is marked. On the right we plot another cut of the solution along the plane $(1,\,0,\,-1)^{\top}$.}
  \end{figure}
  Finally, let \(l\) and \(k\) be two cut lines starting at the boundary points
  \[
    \boldsymbol x_l=(0,\,\frac{1}{4},\,0)^{\top},\,\boldsymbol x_k=(0,\,\frac{1}{4}+10^{-4},\,0)^{\top}
    \;\text{with directions}\;
    \boldsymbol d_l=\frac{1}{\sqrt{2}}(-1,\,0,-1)^{\top},\,\boldsymbol d_k=\frac{1}{\sqrt{3}}(-1,\,-1,\,-1)^{\top}.
  \]
  We define their parametric forms
  \[
    l(s)=\boldsymbol x_l + s \boldsymbol d_l,\quad s\in[0,10^{-2}],
    \qquad
    k(s)=\boldsymbol x_k + s \boldsymbol d_k,\quad s\in[0,\frac{1}{4}],
  \]
  and the corresponding one-dimensional solution profiles
  \begin{equation}\label{eq:cutlines}
    u_l(s)=u(l(s)),
    \quad
    u_k(s)=u(k(s)).
  \end{equation}
  In Fig.~\ref{fig:fichera-cut} we plot \(u_l(s)\) and \(u_k(s)\) over \(s\).
  The plots demonstrate that the solution remains smooth near the goal point
  \(\boldsymbol{x}_c\), with the boundary layer being resolved by the underlying
  anisotropic \(hp\)-refined mesh. Further, the solution remains well-resolved
  downstream of \(\boldsymbol{x}_c\) due to the adaptive $hp$-refinement of
  regions that contribute to the goal functional.

\section{Conclusion}\label{ref:conclusion}
We have presented a goal-oriented adaptive framework for convection-dominated
transport problems based on a fully anisotropic space-time DG discretization
with independent \(h\)- and \(p\)-updates in each spatial direction and in time,
a directional DWR estimator yielding split indicators
\(\eta_{h,\,i}, \eta_{\tau}, i=1,\dots,d\), and an \(h\)- and
\(p\)-robust preconditioner for the resulting space-time systems. Our method
leverages the Dual Weighted Residual approach to control the error with respect
to user-defined target quantities and, by separating directional error
contributions, applies \(h\)- or \(p\)-refinement independently in each spatial
and temporal direction, thereby resolving thin boundary and interior layers and
geometric singularities, concentrating computational effort where the goal
functional is most sensitive, and maintaining solver robustness on highly
anisotropic meshes and for high polynomial degrees.

To address the resulting large and potentially ill-conditioned linear systems,
we proposed a preconditioner based
on~\cite{algoritmy,munch_stage-parallel_2023}, which shows good
robustness with respect to both \( h \)- and \( p \)-refinement and performs well
under strong anisotropies.

Numerical results for established benchmark problems in two and three dimensions
confirm the accuracy and reliability of the proposed method. The approach
consistently identifies and resolves boundary layers, interior layers, and
geometric singularities such as those arising in the Fichera corner. The
adaptive algorithm effectively concentrates resolution in regions that are most
relevant for the chosen goal functional, leading to significant error reduction
at moderate computational cost. For benchmarks with an analytical solution we
observed the expected exponential convergence of $hp$ methods. Anisotropic
refinement plays a critical role in resolving steep gradients along
characteristic directions and capturing local features without unnecessary
refinement elsewhere.

Overall, the results demonstrate that fully anisotropic \( hp \)-adaptivity
combined with goal-oriented error control yields accurate and computationally
efficient solutions for convection-dominated problems. Future work may focus on
parallel mesh refinement, automated tuning of the decision criterion between $h$
and $p$ refinement, and nonlinear problems.

\paragraph{Acknowledgments}
Bernhard Endtmayer acknowledges the support by the Cluster of Excellence PhoenixD (EXC 2122, Project ID 390833453).
Nils Margenberg acknowledges funding from the European Regional Development Fund (grant FEM Poer
II, ZS?2024/06/18815) under the European Union's Horizon Europe Research and
Innovation Program.
Computational resources (HPC cluster HSUper) have been provided by the project
hpc.bw, funded by dtec.bw - Digitalization and Technology Research Center of the
Bundeswehr. dtec.bw is funded by the European Union - NextGenerationEU.

  \FloatBarrier

\printbibliography
\end{document}